\newtheorem{theorem}{Theorem}
\newtheorem{algorithm}[theorem]{Algorithm}
\newtheorem{corollary}[theorem]{Corollary}
\newtheorem{lemma}[theorem]{Lemma}
\newenvironment{proof}{{\noindent\bf Proof.}$\;$}{\hfill\rule{1.6ex}{1.6ex}\par\medskip}
\newlist{compactenum}{enumerate}{4}
\setlist[compactenum]{label=\arabic*.,itemsep=0pt,topsep=0pt,parsep=0pt,leftmargin=*}
\newlist{compactitem}{itemize}{4}
\setlist[compactitem]{label=\textbullet,itemsep=0pt,topsep=0pt,parsep=2pt
  plus1pt minus1pt}
\newcommand{\calB}{{\mathcal{B}}}
\newcommand{\calE}{{\mathcal{E}}}
\newcommand{\calF}{{\mathcal{F}}}
\newcommand{\calI}{{\mathcal{I}}}
\newcommand{\calJ}{{\mathcal{J}}}
\newcommand{\calN}{{\mathcal{N}}}
\newcommand{\calQ}{{\mathcal{Q}}}
\newcommand{\calS}{{\mathcal{S}}}
\newcommand{\frakF}{{\mathfrak{F}}}
\newcommand{\frakH}{{\mathfrak{H}}}
\newcommand{\frakX}{{\mathfrak{X}}}
\newcommand{\frakp}{{\mathfrak{p}}}
\newcommand{\E}{\mathbb{E}}
\newcommand{\F}{\mathbb{F}}
\newcommand{\N}{\mathbb{N}}
\newcommand{\R}{\mathbb{R}}
\newcommand{\clR}{\overline{\mathbb{R}}}
\newcommand{\Sym}{\mathbb{S}}
\newcommand{\Prob}{\mathbb{P}}
\newcommand{\one}{\mathbbm{1}}
\newcommand{\gset}{C}
\newcommand{\hati}{{\ensuremath{\hat\imath}}}
\newcommand{\hatj}{{\ensuremath{\hat\jmath}}}
\newcommand{\socn}{{\ensuremath{\calQ^n}}}
\newcommand{\soc}[1]{{\ensuremath{\calQ^{#1}}}}
\newcommand{\ie}{\mbox{\emph{i.\,e}.,}\ }
\newcommand{\eg}{\emph{e.\,g.}\ }
\newcommand{\Var}{\mathop{\mathrm{Var}}}
\newcommand{\argmin}{\mathop {\rm argmin}}
\newcommand{\tr}{\mathop{\rm tr}}
\newcommand{\rank}{\mathop{\rm rank}}
\newcommand{\dom}{\mathop{\mathrm{dom}}}
\newcommand{\Diag}{\mathop{\rm Diag}}
\newcommand{\svec}{\mathop{\rm svec}}
\newcommand{\skron}{\otimes\unkern_{\unkern s}}
\newcommand{\conv}{\mathop{\rm conv}}
\newcommand{\lmin}{\lambda_{\min}}
\newcommand{\lmax}{\lambda_{\max}}
\newcommand{\mat}[2]{\ensuremath\left[\begin{array}{@{\,}#1@{\,}}#2\end{array}\right]}
\newcommand{\lmat}{\left[}
\newcommand{\rmat}{\right]}
\newcommand{\lvec}{\left(}
\newcommand{\rvec}{\right)}
\newcommand{\vectwo}[2]{\lvec{{#1}\atop{#2}}\rvec}
\newcommand{\ip}[2]{\left\langle {#1},{#2}\right\rangle}
\newcommand{\maxset}[1]{\max_{\begin{array}{c}#1\end{array}}}
\newcommand{\minset}[1]{\min_{\begin{array}{c}#1\end{array}}}
\newenvironment{sbmatrix}{\ensuremath\left[\begin{smallmatrix}}{\end{smallmatrix}\right]}
\newcommand{\upd}[1]{{#1}}
\newcommand{\pcH}[1]{\ensuremath{\upd{\hat H({#1})}}}
\newcommand{\pcG}[1]{\ensuremath{\upd{\hat G({#1})}}}
\begin{document}
\title{A Preconditioned Iterative Interior Point Approach\\ to the
  Conic Bundle Subproblem}
\author{Christoph Helmberg\thanks{Department of Mathematics, 
Chemnitz University of Technology, D-09107 Chemnitz, Germany,
helmberg@math.tu-chemnitz.de,
www.mathematik.tu-chemnitz.de/$\sim$helmberg, ORCID 0000-0002-5288-8000}}
\date{\today}
\maketitle

\begin{abstract}
  The conic bundle implementation of the spectral bundle method for
  large scale semidefinite programming solves in each iteration a
  semidefinite quadratic subproblem by an interior point approach.
  For larger cutting model sizes the limiting operation is collecting
  and factorizing a Schur complement of the primal-dual KKT system. We
  explore possibilities to improve on this by an iterative approach
  that exploits structural low rank properties. Two preconditioning
  approaches are proposed and analyzed. Both might be of interest for
  rank structured positive definite systems in general. The first
  employs projections onto random subspaces, the second projects onto
  a subspace that is chosen deterministically based on structural
  interior point properties. For both approaches theoretic bounds are
  derived for the associated condition number. In the instances tested
  the deterministic preconditioner provides surprisingly efficient
  control on the actual condition number. The results suggest that for
  large scale instances the iterative solver is usually the better
  choice if precision requirements are moderate or if the size of
  the Schur complemented system clearly exceeds the active dimension
  within the subspace giving rise to the cutting model of the bundle
  method.

  \medskip
  
  \noindent\textbf{Keywords:} low rank preconditioner, quadratic
  semidefinite programming, nonsmooth optimization, interior point method\\
  \noindent\textbf{MSC 2020:} 90C22, 65F08; 90C06, 90C25, 90C20, 65K05
\end{abstract}

\section{Introduction}

In semidefinite programming the ever increasing number of applications
\cite{AnjosLasserre2012,WolkowiczSaigalVandenberghe2000,BoydVandenberghe2007,HenrionKordaLasserre2020}
leads to a corresponding increase in demand for reliable and efficient
solvers \upd{for linear programs over symmetric cones}. In general,
interior point methods are the method of choice.  Yet, if the order of
\upd{some semidefinite matrix variables} gets large and the affine matrix functions involved do
not allow to use decomposition or factorization approaches such as
proposed in
\cite{KimKojimaMevissenYamashita2011,BensonYeZhang2000,BurerMonteiro2003},
general interior point methods are no longer applicable. The limiting
factors are typically memory requirements and computation times
connected with forming and factorizing a Schur complemented system
matrix of the interior point KKT system. \upd{Large scale second order cone
variables do not cause such problems, this is indeed specific to 
semidefinite settings. In such cases,} the spectral
bundle method of \cite{HelmbergRendl2000} offers a viable alternative.

The spectral bundle method reformulates the semidefiniteness condition
via a penalty term on the extremal eigenvalues of a corresponding
affine matrix function and assumes these eigenvalues to be efficiently
computable by iterative methods. In each step it selects a subspace
close to the current active eigenspace. Then the next candidate point
is determined as the proximal point with respect to the extremal
eigenvalues of the affine matrix function projected onto this
subspace. The proximal point is the optimal solution to a quadratic
semidefinite subproblem whose matrix variable is of the order of the
dimension of the approximating subspace. If the subspace is kept
small, this allows to find approximately optimal solutions in
reasonable time. In order to reach solutions of higher precision it
seems unavoidable to go beyond the full active eigenspace
\cite{HelmbergOvertonRendl2014,DingGrimmer2020}. In the current
implementation within the callable library ConicBundle
\cite{ConicBundle2021}\upd{, which also supports second order cone and
  nonnegative variables, }the quadratic subproblem is solved by an
interior point approach. Again for each of its KKT systems the
limiting work consists in collecting and factorizing a Schur
complement matrix whose order is typically the square of the dimension
of the active eigenspace. The main question addressed here is whether
it is possible to stretch these limits by developing a suitably
preconditioned iterative solver that allows to circumvent the
collection and factorization of this Schur complement.  The focus is
thus not on the spectral bundle method itself but on solving KKT
systems of related quadratic semidefinite \upd{and more generally
  quadratic conic} programs by iterative methods. \upd{While the
  motivating and most general semidefinite case dominates in this
  work, natural extensions to second order and nonnegative cones will
  also be mentioned, because future applications may well expect and
  require support for arbitrary combinations of conic variables.}
Even though the methodology will be developed and discussed for low
rank properties that arise in the ConicBundle setting, some of the
considerations and ideas should be transferable to general conic
quadratic optimization problems whose quadratic term consists of a
positive diagonal plus a low rank Gram matrix or maybe even to general
positive definite systems of this form.

Here is an outline of the paper and its main
contributions. Section \ref{S:ConicBundleKKT} provides
the necessary background on the bundle philosophy underlying
ConicBundle and derives the KKT system of the bundle subproblem. The
core of the work is presented in Section \ref{S:lowrankprecond} on low
rank preconditioning for a Gram-matrix plus positive diagonal. For
slightly greater generality, denote the cone of positive
(semi)definite matrices of order $m$ by $\Sym^m_{++}\ (\Sym^m_+)$ and
let the system matrix be given in the form
\begin{displaymath}
  H=D+VV^\top\qquad\text{ with }D\in\Sym^m_{++}, V\in\R^{m\times n}, 
\end{displaymath}
where it is tacitly assumed that $D^{-1}$ times vector and $V$ times
vector are efficiently computable. Typically $n\le m$ but whenever $n$
is sizable one would like to approximate $V$ by a matrix
$\hat V\in\R^{m\times k}$ with significantly smaller $k<n$ to obtain a
preconditioner $\hat H=D+ \hat V\hat V^\top$ \upd{whose inverse, by a low
rank update, reads} $\hat H^{-1}=D^{-1}-D^{-1}\hat V(I_k+\hat V^\top D^{-1}\hat
V)^{-1}\hat V^\top D^{-1}$. Comparing this to the inverse of $H$, the
goal is to capture the large eigenvalues of $V^\top D^{-1}V$, more
precisely the directions belonging to large singular values of
$D^{-\frac12}V$. By the singular value decomposition (SVD) this can be
achieved by the projection onto a subspace, say $D^{-\frac12}VP$ for a
suitably chosen orthogonal $P\in\R^{n\times k}$. Because the full SVD
is computationally too expensive, two other approaches will be
developed and analyzed here. In the first, Section
\ref{S:randprecond}, the orthogonal $P$ is generated by a Gaussian
matrix $\Omega\in\R^{n\times k}$. In the second, Section
\ref{S:detprecond}, some knowledge about the interior point method
leading to $V$ will be exploited in order to form $P$
deterministically.

The projection onto a random subspace may be motivated geometrically
by interpreting the Gram matrix $VV^\top$ as the inner products of the
row vectors of $V$. The result of Johnson-Lindenstrauss, cf.~\cite{Achlioptas2001,DasguptaGupta2002}, allows to approximate this
with low distortion by a projection onto a low dimensional
subspace. In matrix approximations this idea seems to have first
appeared in \cite{PapadimitriouRaghavanTamakiVempala2000}.  In
connection with preconditioning a recent probabilistic approach is
described in \cite{HighamMary2019} in the context of controlling the
error of a LU preconditioner. \cite{HalkoMartinssonTropp2011} gives an
excellent introduction to probabilistic algorithms for constructing
approximate matrix decompositions and provides useful
bounds. Building directly on their techniques we provide
deterministic and probabilistic bounds on the condition number of the
random subspace preconditioned system in theorems
\ref{th:deterministicbound} and \ref{th:probabilisticbound}. In
comparison to the moment analysis of the Ritz values of the
preconditioned matrix presented in Theorem~\ref{T:moments}, the bounds seem
to fall below expectation and are maybe still improvable.  Random
projections do not require any problem specific structural insights,
but it remains open how to choose the subspace dimension in order to
obtain an efficient preconditioner.

In contrast, identifying the correct subspace seems to work well for the
deterministic preconditioning routine. It exploits structural
properties of the KKT system's origin in interior point methods.
Within interior point methods iterative approaches have been
investigated in quite a number of works, in conjunction with
semidefinite optimization see \eg \cite{Toh2008,KocvaraStingl2007}.
These methods were
mostly designed for exploiting sparsity rather than low rank
structure.  During the last months of this work an approach closely
related to ours appeared in \cite{HabibiKavandKocvaraStingl2021}. It significantly
extends ideas of \cite{ZhangLavaei2017} for a deterministic
preconditioning variant. It assumes the rank of the optimal solution to
be known in advance and provides a detailed analysis for this case. Their
ideas and arguments heavily influenced the condition number analysis
of our approach presented in theorems \ref{T:deterministiccond} and
\ref{T:speccondbound}. In contrast to \cite{HabibiKavandKocvaraStingl2021}, our
algorithmic approach does not require any a priori knowledge on the
rank of the optimal solution. Rather, Theorem~\ref{T:speccondbound}
and Lemma~\ref{L:TTTPSC} motivate an
estimate on the singular value induced by certain directions
associated with active interior point variables, that seems to offer a good indicator for the relevance of the corresponding subspace.

In Section \ref{S:experiments} the performance of the preconditioning
approaches is illustrated relative to the direct solver on sequences
of KKT systems that arise in solving three large
scale instances within ConicBundle.  The deterministic approach turns out to
be surprisingly effective in identifying a suitable subspace.  It
provides good control on the condition number and reduces the number
of matrix vector multiplications significantly.  The selected instances are
also intended to demonstrate the differences in the potential of the
methods depending on the problem characteristics.  Roughly, the direct
solver is mainly attractive if the model is tiny, if significant parts of the Schur
complement can be precomputed for all KKT systems of the same
subproblem or if precision requirements get exceedingly high with the entire
bundle model being strongly active. In general, however, the iterative
approach with deteriministic preconditioner can be expected to lead to
significant savings in computation time in large scale applications.
\upd{In order to demonstrate that this KKT systems based analysis suitably reflects
the performance of the solvers within the bundle method, the section
closes with reporting preliminary experiments on randomly generated Max-Cut
instances where ConicBundle is run for each solver separately with exactly the same
parameter settings that were developed and tuned for the direct solver.}
In Section \ref{S:concl} the paper ends with some concluding remarks.

\textbf{Notation.} For matrices or vectors $A,B\in\R^{m\times n}$ the
(trace) inner product is denoted by $\ip{A}{B}=\tr B^\top
A=\sum_{ij}A_{ij}B_{ij}$. $A\circ B=(A_{ij}B_{ij})$ denotes the
elementwise or Hadamard product. \upd{$A_{i,\bullet}$ refers to the
row-vector of the $i$-th row of $A$ and $A_{\bullet,j}$ to the
column-vector of the $j$-th column of $A$. For some ordered
index set $J\subseteq\{1,\dots,n\}$ the submatrix
$A_{\bullet,J}$  consists of the respective columns.} 
Consider symmetric matrices $A,B\in\Sym^n$ of order $n$. For representing
these as vectors, 
the operator $\svec
A=(A_{11},\sqrt{2}A_{21},\dots,\sqrt{2}A_{n1},A_{22
},\sqrt{2}A_{32},\dots,A_{nn})^\top$ stacks the columns of the lower
triangle with offdiagonal elements multiplied by $\sqrt{2}$ so that $\ip{A}{B}=\svec(A)^\top\svec(B)$. For matrices $F,G\in \R^{k\times n}$ the symmetric Kronecker product $\skron$ is defined by $(F\skron G)\svec(A)=\frac12\svec(FAG^\top+GAF^\top)$.  The Loewner partial
order $A\succeq B$ ($A\succ B$) refers to $A-B\in\Sym^n_{+}$
($A-B\in\Sym^n_{++})$ being positive semidefinite (positive
definite). The eigenvalues of $A$ are denoted by $\lmax(A)=\lambda_1(A)\ge \dots\ge \lambda_n(A)=\lmin(A)$. The norm $\|\cdot\|$ refers to the
Euclidean norm for vectors and to the spectral norm for matrices. 
$I_n$ ($I$) denotes the identity matrix of order $n$ (or of appropriate size), the canonical unit vectors $e_i$ refer to
the $i$-th column of $I$. Unless stated explicitly otherwise, $\one$
denotes the vector of all ones of appropriate size. 
$\E$ refers to the expected value of a random variable, $\Var$ to its variance and
 $\calN(\mu,\sigma^2)$ to the normal or Gaussian distribution with mean $\mu$ and standard deviation $\sigma$.

\section{The KKT system of the ConicBundle Subproblem}\label{S:ConicBundleKKT}

The general setting of bundle methods deals with minimizing a (typically closed) convex 
function $f\colon\R^m\to\clR:=\R\cup\{\infty\}$ over a closed convex
ground set $\gset\subseteq\dom f$ of simple structure like $\R^m$, a
box or a polyhedron,
\begin{displaymath}
\text{minimize}~f(y)~~\text{ subject to }~y\in\gset.
\end{displaymath}
Typically, $f$ is given by a first order oracle, \ie a routine that returns
for a given $\bar y\in\gset$ the function value $f(\bar y)$ and an
arbitrary subgradient $g\in\partial f(\bar y)$ from the
subdifferential of $f$ in $\bar y$. Value $f(\bar y)$ and subgradient
$g$ give rise to a
supporting hyperplane to the epigraph of $f$ in $(\bar y,f(\bar y))$. The
algorithm collects these affine minorants in the \emph{bundle} to form a
cutting model of $f$. It will be convenient to arrange the value at zero
and the gradient in a pair $\omega=(\gamma=f(\bar y)-\ip{g}{\bar
  y},g)$ and to denote, for $y\in\R^m$, the minorant's value in $y$ by
$\omega(y):=\gamma+\ip{g}{y}$.

Let $\mathcal{W}_f=\{\omega=(\gamma,g)\in\R^{1+m}\colon \gamma+\ip{g}{y}\le f(y),
y\in \R^m\}$ denote the set of all affine minorants of $f$. For closed
$f$ we have $f(y)=\sup_{\omega\in\mathcal{W}_f}\omega(y)$. Any
compact subset $W\subseteq\mathcal{W}_f$ gives rise to a minorizing
cutting model
of $f$,
\begin{displaymath}
  W(y):=\max_{\omega\in W} \omega(y)\le f(y),\quad y\in\R^m. 
\end{displaymath}
At the beginning of iteration $k=0,1,\dots$ the bundle method's state is described by a
current \emph{stability center} $\hat y_k\in \R^m$, a compact cutting
model $W_k\subseteq \mathcal{W}_f$, and a proximity term, here the square 
of a norm
$\|\cdot\|_{\upd{\mathfrak{H}}_k}^2:=\ip{\cdot}{\upd{\mathfrak{H}}_k\cdot}$
with positive definite $\upd{\mathfrak{H}}_k$ \upd{(this Fraktur 
  $H$ will form the core of the final system matrix $H$)}. The method
determines the next candidate $y_{k+1}\in\R^m$ as minimizer of the
\emph{augmented model} or \emph{bundle subproblem}
\begin{equation}\label{eq:candidate}
  y_{k+1}=\argmin_{y\in\gset} W_k(y)+\tfrac12\|y-\hat y_k\|_{\upd{\mathfrak{H}}_{k}}^2.
\end{equation}
Solving this bundle subproblem may be viewed as determining a
saddle point $(y_{k+1},\bar\omega_{k+1}=(\bar\gamma_{k+1},\bar g_{k+1}))\in \gset\times\conv W_k$,
which exists for any closed convex $C$ by \cite{Rockafellar70},
Theorems 37.3 and 37.6, due to the strong convexity in $y$ and the compactness of
$W_k$, 
\begin{eqnarray*}
\bar\omega_{k+1}(y_{k+1})+\tfrac12\|y_{k+1}-\hat
  y_k\|_{\upd{\mathfrak{H}}_{k}}^2&=&
  \inf_{y\in\gset}\sup_{\omega=(\gamma,g)\in
    W_k}\gamma+\ip{g}{y}+\tfrac12\|y-\hat
  y_k\|_{\upd{\mathfrak{H}}_{k}}^2\\
&=&\sup_{\omega\in\conv W_k}\inf_{y\in\gset}\omega(y)+\tfrac12\|y-\hat
  y_k\|_{\upd{\mathfrak{H}}_{k}}^2.
\end{eqnarray*}
Strong convexity in $y$ ensures uniqueness of $y_{k+1}$. First
order optimality with respect to $y$ implies
\begin{displaymath}
 \bar g_{k+1}+\upd{\mathfrak{H}}_k(y_{k+1}-\hat y_k)\in -N_C(y_{k+1}),
\end{displaymath}
where $N_C(y)$ denotes the normal cone to $C$ at $y\in C$.
In the unconstrained case of $C=\R^m$ the \emph{aggregate}
$\bar\omega_{k+1}$ is also unique. Whether unique or not, the \emph{aggregate}
will refer to the solution $\bar\omega_{k+1}\in\conv W_k$  produced by the algorithmic approach for
solving \eqref{eq:candidate}. The progress predicted by the model is 
$f(\hat y_k)-\bar\omega_{k+1}(y_{k+1})=f(\hat
y_k)-W_{k}(y_{k+1})$. Actual
progress will be compared to a threshold value which arises from
damping the progress predicted by the model by some  $\kappa
\in(0,1)$ 
\begin{displaymath}
  \vartheta_{k+1}=\kappa[f(\hat y_k)-\bar\omega_{k+1}(y_{k+1})].
\end{displaymath}
Next, $f$ is evaluated at $y_{k+1}$ by calling the oracle which returns $f(y_{k+1})$ and a
new minorant $\omega_{k+1}$ with
$\omega_{k+1}(y_{k+1})=f(y_{k+1})$. If progress in objective value
is sufficiently large in comparison to the
progress predicted by the model, \ie 
\begin{displaymath}
f(\hat
y_k)-f(y_{k+1})\ge \vartheta_{k+1},
\end{displaymath}
the method executes a \emph{descent step} which moves the center of
stability to the new point, $\hat y_{k+1}=y_{k+1}$. Otherwise, in a
\emph{null step}, the center remains unchanged,
$\hat y_{k+1}=\hat y_{k}$, but the new minorant $\omega_{k+1}$ is used
to improve the model. In fact, the requirement
$\{\bar\omega_{k+1},\omega_{k+1}\}\subseteq W_{k+1}$ ensures
convergence of the function values $f(\hat y_k)$ to $f^*=\inf_{y\in C} f(y)$ under mild technical conditions on $\upd{\mathfrak{H}}_{k+1}$. For these it suffices, \eg to
fix $0<\underline{\lambda}\le\bar\lambda$ and to choose
$\underline{\lambda}I\preceq \upd{\mathfrak{H}}_{k+1} \preceq \bar\lambda I$ following
a descent step and $\upd{\mathfrak{H}}_k\preceq \upd{\mathfrak{H}}_{k+1} \preceq \bar\lambda I$
following a null step, see \cite{BonnansGilbertLemarechalSagastizabal2006}. 

The decisive elements for an efficient implementation are the following:
\begin{itemize}
\item the choice of the cutting model $W_k$,
\item the choice of the proximal term, in our case of $\upd{\mathfrak{H}}_k$,
\item the solution method for the bundle subproblem
  \eqref{eq:candidate} with corresponding structural requirements on supported ground
  sets $C$. 
\end{itemize}
and their interplay. While most bundle implementations employ
polyhedral cutting models combined with a suitable active set QP
approach, the ConicBundle callable library \cite{ConicBundle2021} is
primarily designed for (nonnegative combinations of) conic cutting
models built from symmetric cones. In particular the cone of positive
semidefinite matrices and the second order cone lead to
nonpolyhedral models that change significantly in each step. In
solving \eqref{eq:candidate} for these models, interior point methods
are currently the best option available, but with these methods the
cost of assembling the coefficients and solving the subproblem
dominates the work per iteration for most applications. This paper
explores possibilities to replace the classical Schur complement
approach for computing the Newton step by an iterative approach
in order to improve applicability to large scale problems.

\upd{As the main focus of this work is on solving
  problem \eqref{eq:candidate} for a single iteration, we will refrain from
  giving the iteration index $k$ in the following.}
In order to describe the main structure of the primal dual KKT system
for \eqref{eq:candidate}, let us briefly sketch the
conic cutting models employed. These build on combinations of the
cone of nonnegative vectors, the second order cone and the cone
of positive semidefinite matrices, each with a specific trace
vector for measuring the ``size'' of its elements, 
\begin{align*}
  x\in\R^n_+&:=\{x\in\R^n\colon x_i\ge 0, i=1,\dots,n\} &&\Leftrightarrow\colon x\ge 0,&&\text{trace vector }\one_n=\begin{sbmatrix}1\\[-1ex]\vdots\\1\end{sbmatrix},\\  
x\in \socn&:=\left\{\vectwo{x_1}{\bar x}\in\R^n\colon x_1\ge\|\bar x\|\right\}&&
\Leftrightarrow\colon x\unrhd 0,&&\text{trace vector }e_1=\begin{sbmatrix}1\\0\\[-1ex]\vdots\\0\end{sbmatrix},\\
X\in\Sym^n_+&:=\left\{X\in\Sym^n\colon \lambda_{\min}(X)\ge 0\right\}&&\Leftrightarrow\colon X\succeq 0, &&\text{trace matrix }I_n.
\end{align*}
Cartesian products of these are described
by a triple $t=(n,q,s)$ with $n\in \N_0$, $q\in\N^{n_q}$, $s\in\N^{n_s}$ for some  $n_q\in\N_0$, $n_s\in\N_0$ specifying the cone
\begin{equation}\label{eq:St}
  \mathcal{S}^t_+:=\R_+^n\times \bigtimes_{i=1}^{n_q} \soc{q_i}\times \bigtimes_{i=1}^{n_s} \Sym_+^{s_{i}}.
\end{equation}
The cone $\mathcal{S}^t_+$ will be regarded as a full dimensional cone in $\R^t:=\R^{n(t)}$ with $n(t)=n+\one^\top q+\sum_{i=1}^{n_s}{s_i+1\choose 2}$. Whenever convenient,
an $x\in \mathcal{S}^t_+$ or $\R^t$ will be split into
\begin{equation}\label{eq:xt}
x=(\xi^\top,(x^{(1)})^\top,\dots,(x^{(n_q)})^\top,\svec(X^{(1)})^\top,\dots,\svec(X^{(n_s)})^\top)^\top
\end{equation}
in the natural way. Indices or elements may be omitted if the corresponding counters $n$, $n_q$, $n_s$ happen to be 0 or 1. 
The \emph{trace} $\tr(\cdot)$ of an element $x\in S_t$ is defined to be 
\begin{displaymath}
  \tr x := \ip{\one_t}{x}:= \ip{\one}{\xi}+\sum_{i=1}^{n_q}\ip{e_1}{x^{(i)}}+\sum_{i=1}^{n_s} \ip{I}{X^{(i)}}.
\end{displaymath}
In ConicBundle each cutting model may be considered to be specified by a tuple  $M=(t,\upd{\tau},K,\calB,\underline\omega)$ via
\begin{displaymath}
  W_M(y)=\maxset{x\in \mathcal{S}^t_+\\ \tau-\ip{\one_t}{x}\in K}[\underline\omega(y) +\ip{\calB(y)}{x}]
\end{displaymath}
where
\begin{align*}
  &t=(n,q,s)&&\text{ specifies the cone as above,}\\
  & \tau>0 &&\text{ gives the trace value or trace upper bound,}\\
  & K\in\{\{0\},\R_+\}&&\text{ specifies constant or bounded trace,}\\
  &\begin{array}{@{}r@{\;}r@{\,}c@{\,}l}\mathcal{B}\colon&\R^m&\to&\R^{n(t)}\\
     &y&\mapsto&\mathcal{B}(y)=B_0+By\end{array} &&\text{ represents the bundle as affine function,}\\
  & \underline{\omega}=(\underline{\gamma},\underline{g}) &&\text{ provides a constant offset subgradient}.
\end{align*}
For example, the standard polyhedral model for $h$ subgradients $\omega_i=(\gamma_i,g_i)$, $i=1,\dots,h$ is obtained for $M=(t=(h,0,0),\tau=1,K=\{0\},
(B_0=\begin{sbmatrix}
  \gamma_1\\[-1ex]\vdots\\\gamma_h
\end{sbmatrix},B=
\begin{sbmatrix}
  g_1^\top\\[-1ex]\vdots\\g_h^\top
\end{sbmatrix}),\underline{\omega}=0)$. Indeed, maximizing
$\ip{\calB(y)}{x}$ over $\xi\in\R^h_+$ with $\one^\top\xi=1$ finds the
best convex combination of the subgradients at $y$. In polyhedral
models $\calB$ may well be sparse \upd{(consider \eg Lagrangean
  relaxations of multi-commodity flow problems like in the train
time-tabling application described in \cite{FischerHelmberg2014a,FischerHelmberg2014b})}, but in combination with positive
semidefinite models large parts of $\calB$ will be dense \upd{(see
\cite{HelmbergRendl2000}; for concreteness, consider
$f(y)=\lmax(\sum_{i=1}^mA_iy_i)$ with $A_i\in\Sym^n$ and let the spectral bundle be
described by $h'$ orthonormal columns $P\in\R^{n\times h'}$ that
approximately span the eigenspace to large eigenvalues,  then
$h={h'+1\choose 2}$ and $B_{\bullet,i}=\svec(P^\top
A_iP)$ for $i=1,\dots,m$)}. Therefore we will not assume any specific structure in the bundle $\calB$.

Depending on the variable metric heuristic in use, see \cite{HelmbergOvertonRendl2014,HelmbergPichler2017} for typical choices in ConicBundle, the proximal term may either be a positive multiple of $I_m$ or it may be of the form $\upd{\mathfrak{H}}=D_\upd{\mathfrak{H}}+V_\upd{\mathfrak{H}}V_\upd{\mathfrak{H}}^\top\in\Sym^n_+$, where $D_\upd{\mathfrak{H}}$ is a diagonal matrix with strictly positive diagonal entries and $V_\upd{\mathfrak{H}}\in\R^{m\times h_\upd{\mathfrak{H}}}$ specifies a rank $h_\upd{\mathfrak{H}}$ contribution. If $V_\upd{\mathfrak{H}}$ is present, it typically consists of dense orthogonal columns.

The final ingredient is the basic optimization set $C$, which may have a polyhedral description of the form
\begin{displaymath}
  C=\{y\colon \underline{a} \le A y \le \overline{a}, \underline{y}\le y\le\overline{y}\},
\end{displaymath}
where $A\in\R^{h_A\times m}$, $\underline{a}\in
(\R\cup\{-\infty\})^{h_A},\overline{a}\in(\R\cup\{\infty\})^{h_A}$,
$\underline{y}\in(\R\cup\{-\infty\})^m,\overline{y}\in(\R\cup\{\infty\})^m$
are given data.
If $A$ is employed in applications, we expect the number of rows $h_A$ of $A$ to be small in comparison to $m$. The set $C$ is tested for feasibility in advance, but no preprocessing is applied.

In order to reduce the indexing load in this presentation, we consider
problem \eqref{eq:candidate} \upd{for a single} model 
$(t,\upd{\tau},K,\mathcal{B},\underline\omega=(\underline\gamma,\underline
g))$. Putting $b=-\upd{\mathfrak{H}}\hat y+\underline{g}$ and $\delta=\frac12\ip{\upd{\mathfrak{H}}\hat
  y}{\hat y}+\underline\gamma$ the bundle problem may be written in
the form 
\begin{equation}\label{eq:saddle}
  \minset{Ay-w=0\\\underline{a}\le w\le \overline{a}\\\underline{y}\le y \le \overline{y}}
  \maxset{x\in \mathcal{S}^t_+\\ \upd{\tau}-\ip{\one_t}{x}-\sigma=0\\\sigma \in K}
  \left[\frac12\ip{\upd{\mathfrak{H}}y}{y}+\ip{b}{y}+\ip{B_0}{x}+\ip{By}{x}+\delta\right].
\end{equation}
The existence of saddle points is guaranteed by compactness of the $x$-set and
by strong convexity for $y$ due to $\upd{\mathfrak{H}}\succ 0$, see \eg \cite{HiriartUrrutyLemarechal93a}. 
For the purpose of presentation, the primal dual KKT system
for solving the saddle point problem \eqref{eq:saddle} is built for $\underline{a}<\overline{a}$, $\underline{y}<\overline{y}$ and
$K=\R_+$. The extension to the equality cases follows quite naturally
and will be commented on at appropriate places.  Throughout we will
assume that $A,A^\top$ and $B,B^\top$ are given by matrix-vector multiplication
oracles. $A$ is assumed to have few rows, $B$ may actually have a
large number of columns, $\upd{\mathfrak{H}}$ is a positive diagonal plus low rank, but
no further structural information is assumed to be available.

The original spectral bundle approach of \cite{HelmbergRendl2000} was designed for the
unconstrained case $C=\R^m$ which allows direct elimination of $y$ by
convex optimality. Setting up the maximization problem for $x$ then
requires forming the typically dense Schur complement
$B\upd{\mathfrak{H}}^{-1}B^\top$. For increasing bundle sizes this is in fact the
limiting operation within each bundle iteration. The aim of
developing an iterative approach for \eqref{eq:saddle} is therefore
not only to allow for general $C$ but also to circumvent the explicit
computation of this Schur complement.

For setting up a primal-dual interior point approach for solving \eqref{eq:saddle}, the dual variables to constraints on the minimizing side
will be denoted by $s\in\R^{h_A}$, $s_{\underline{a}},s_{\overline{a}}\in\R^{h_A}_+$, $s_{\underline{y}},s_{\overline{y}}\in\R^m_+$, the dual variables to the constraints on the maximizing side will be 
$z\in\mathcal{S}^t_+$ and $\zeta\in K^*=\R_+$.

\begin{equation}\label{eq:origKKT}
  \begin{array}{c}
  \begin{array}{ccccccccccc}
    \upd{\mathfrak{H}}y & + &A^\top s & + &B^\top x & &    &+&s_{\overline{y}} -s_{\underline{y}} &= & -b \\
    Ay &  &        &  &          & &                 &-&w &=&0 \\
    By &   &       &  &          & -&\one_t\zeta     & +&z & =&-B_0 \\
       & &     &-& \ip{\one_t}{x} & &                 &-&\sigma&=&-\upd{\tau}\\
       &- &s     & & & &                 &+& s_{\overline{a}}-s_{\underline{a}} &=& 0\\
  \end{array}\\\\
  \begin{array}{ccc@{\qquad\qquad}ccc}
    (w-\underline{a})\circ s_{\underline{a}} &=& \mu\one & (\overline{a}-w)\circ s_{\overline{a}} &=& \mu\one \\
    (y-\underline{y})\circ s_{\underline{y}} &=& \mu\one & (\overline{y}-y)\circ s_{\overline{y}} &=& \mu\one \\
    x\circ_tz &=& \mu \one_t & \sigma\zeta &=&\mu.
  \end{array}
    \end{array}
\end{equation}
In this, ``$\circ$'' denotes the componentwise Hadamard product and ``$\circ_t$'' a canonical
generalization to the cone $\mathcal{S}^t_+$, employing the arrow operator for second order
cone parts and (typically symmetrized) matrix products for semidefinite parts.

In solving this by Newton's method, the linearization of the first perturbed complementarity
line yields
\begin{align*}
    \Delta s_{\underline a}&=\mu(w-\underline{a})^{-1}-s_{\underline{a}}-\Delta w\circ s_{\underline{a}}\circ(w-\underline{a})^{-1}\\
    \Delta s_{\overline
    a}&= \mu(\overline{a}-w)^{-1}-s_{\overline{a}}+\Delta w\circ
          s_{\overline{a}}\circ(\overline{a}-w)^{-1}
\end{align*}
\upd{For writing the difference  $\Delta s_{\overline a}-\Delta s_{\underline a}$ compactly it is advantageous
to introduce
\begin{align*}
    d_w &:=s_{\overline{a}}\circ(\overline{a}-w)^{-1}+s_{\underline{a}}\circ(w-\underline{a})^{-1}\\
  c_w&:=(\overline{a}-w)^{-1}-(w-\underline{a})^{-1}
\end{align*}
so that 
\begin{displaymath}
   \Delta s_{\overline a}-\Delta s_{\underline a}=\Delta w\circ d_w+s_{\underline{a}}-s_{\overline{a}}-\mu c_w.
\end{displaymath}
Likewise, for the second perturbed complementarity line, introduce
\begin{align*}
  d_y&:=s_{\overline{y}}\circ(\overline{y}-y)^{-1}+s_{\underline{y}}\circ(y-\underline{y})^{-1}\\
  c_y&:=(\overline{y}-y)^{-1}-(y-\underline{y})^{-1}
\end{align*}
to obtain
\begin{displaymath}
                  \Delta s_{\upd{\overline y}}-\Delta s_{\upd{\underline y}}= \Delta y\circ d_y+s_{\underline{y}}-s_{\overline{y}}-\mu c_y.
\end{displaymath}}
For dealing with the conic complementarity ``$\circ_t$'' we employ the
symmetrization operators $\calE_t$ and $\calF_t$ of \cite{ToddTohTuetuencue98} in diagonal block form corresponding to $\calS^t_+$, which give rise to a symmetric positive definite 
$\frakX_t=\calE_t^{-1}\calF_t\succ 0$ with diagonal block structure according to $\calS^t_{++}$. With this the last perturbed complementarity line results in
\begin{displaymath}
  \begin{array}{ccl}
    \Delta z&=&-\frakX_t^{-1}\Delta x + \mu x^{-1}-z,\\
    \Delta \sigma & = & -\zeta^{-1}\sigma\Delta \zeta + \mu \zeta^{-1}-\sigma.
  \end{array}
\end{displaymath}
Employing the linearization of the defining equation for $s$, 
\begin{displaymath}
  \Delta s_{\overline a}-\Delta s_{\underline a}=\Delta s + s +s_{\underline{a}}- s_{\overline{a}},
\end{displaymath}
the variable $\Delta w$ may now be eliminated via
\begin{displaymath}
  \Delta w=d_w^{-1}\circ \Delta s+d_w^{-1}\circ s+\mu d_w^{-1}\circ c_w.
\end{displaymath}
Put $D_y:=\Diag(d_y)>0$ and $D_w:=\Diag(d_w)>0$, then the Newton step is obtained by solving the system
\begin{equation}\label{eq:fullKKT}
  \lmat
  \begin{array}{cccc}
    \upd{\mathfrak{H}}+D_y&A^\top&B^\top & 0 \\
    A & -D_w^{-1} &  0  & 0 \\
    B & 0 & -\frakX_t^{-1} & -\one_t \\
    0 & 0 & -\one_t^\top & \zeta^{-1}\sigma
  \end{array}\rmat
  \lvec
  \begin{array}{c}
    \Delta y\\ \Delta s \\ \Delta x \\ \Delta \zeta
  \end{array}\rvec
  =
  \lvec
  \begin{array}{c}
    r_y\\ r_s \\ r_x \\ r_\zeta
  \end{array}\rvec,
\end{equation}
with right hand side 
\begin{displaymath}
  \lvec
  \begin{array}{c}
    r_y\\ r_s \\ r_x \\ r_\zeta
  \end{array}\rvec
  =
  \lvec
  \begin{array}{ccl}
    -(\upd{\mathfrak{H}}y+B^\top x+A^\top s+b)&+&\mu c_y\\
    -(Ay-w)+d_w^{-1}\circ s&+&\mu d_w^{-1}\circ c_w \\
    -(By+B_0-\one_t\zeta)&-&\mu x^{-1}\\
    -(\upd{\tau}-\ip{\one_t}{x})&+&\mu \zeta^{-1}
  \end{array}\rvec.
\end{displaymath}
The right hand side may be modified as usual to obtain predictor and corrector right hand sides, but this will not be elaborated on here. 
Note,  for $K=\{0\}$ the same system works with $\sigma=0$ and without the centering
terms associated with $\zeta$ in $r_\zeta$. Likewise, whenever line $i$ of $A$ corresponds to an equation, \ie $\underline{a}_i=\overline{a}_i$, the respective entry of $d_w^{-1}$ has to be replaced by zero.

Eq.~\eqref{eq:fullKKT} is a symmetric indefinite system, that could be solved
by appropriate iterative methods directly. So far, however,
we were not able to conceive suitable general preconditioning approaches
for exploiting the given structural properties in the full system.
Surprisingly, a viable path seems to be offered by the traditional
Schur complement approach after all. The resulting system allows to perform matrix vector
multiplications at minimal additional cost and is frequently positive
 definite. 

To see this, first take the Schur complement with respect to the $\frakX_t^{-1}$ block,
\begin{displaymath}\label{eq:XiZSchur}
  \lmat
  \begin{array}{@{}c@{\quad}c@{\quad}c@{\ }}
    \upd{\mathfrak{H}}+D_y+B^\top\frakX_tB &A^\top& -B^\top\frakX_t\one_{t} \\
    A &  -D_w^{-1}  & 0 \\
    -(B^\top\frakX_t\one_{t})^\top  & 0 & \zeta^{-1}\sigma+\one_t^\top\frakX_t\one_t \\
  \end{array}\rmat.
\end{displaymath}
Assuming $d_w>0$ (no equality constraint rows in $A$), eliminate the second and third block with further Schur complements and
split $B^\top \frakX_t B=B^\top\frakX_t^{\frac12}\frakX_t^{\frac12}B$,
\begin{equation}\label{eq:SchurH}
  H= \lmat
    \upd{\mathfrak{H}}+D_y+B^\top\frakX_t^{\frac12}(I-\tfrac{\frakX_t^{\frac12}\one_{t}(\frakX_t^{\frac12}\one_{t})^\top}{\zeta^{-1}\sigma+\one_t^\top\frakX_t\one_t})\frakX_t^{\frac12}B +A^\top D_wA  
  \rmat.
\end{equation}
Also in the equality case of $\sigma=0$ the matrix $I-\tfrac{\frakX^{\frac12}\one_{t}(\frakX^{\frac12}\one_{t})^\top}{\zeta^{-1}\sigma+\one_t^\top\frakX_t\one_t}\succeq 0$ is positive semidefinite, so the resulting system is positive definite. Equality constraints in $A$ induce zero diagonal elements in $D_w$ (or in $-D_w^{-1}$). In this case the corresponding rows will not be eliminated and give rise to an indefinite system
of the form $\begin{sbmatrix}
  H & \tilde A^\top \\ \tilde A & 0
\end{sbmatrix}$ with a large positive definite block $H$ and hopefully few
further rows in $\tilde A$. For such systems it is well studied how to
employ a preconditioner for $H$ to solve the full indefinite
system with \eg\ MINRES, see \cite{ElmanSilvesterWathen2006}.

The cost of multiplying the full KKT matrix of \eqref{eq:fullKKT} by a
vector is roughly the same as that of multiplying $H$ by a vector. Indeed, the same
multiplications arise for $\upd{\mathfrak{H}}+D_y,A,A^\top,B,B^\top$. So it remains to compare
the cost of a multiplication by $\begin{sbmatrix}
 -\frakX^{-1} &-\one_t\\-\one_t^\top &\zeta^{-1}\sigma 
\end{sbmatrix}$
to a multiplication by $\frakX_t^{\frac12}(I-\tfrac{\frakX_t^{\frac12}\one_{t}(\frakX_t^{\frac12}\one_{t})^\top}{\zeta^{-1}\sigma+\one_t^\top\frakX_t\one_t})\frakX_t^{\frac12}=\frakX_t-\tfrac{\frakX_t\one_{t}(\frakX_t\one_{t})^\top}{\zeta^{-1}\sigma+\one_t^\top\frakX_t\one_t}$. Recall that $\frakX_t$ is a block diagonal matrix with a separate block for each cone $\R_+$, $\soc{q_i}$, $\Sym^{s_i}_+$ specified by $t$ and the cost of
multiplying by $\frakX_t$ or $\frakX_t^{-1}$ is identical. Thus the only difference
are the multiplications by $\one_t$ in the first case and by the precomputed  vector $\frakX_t\one_{t}$ in the second. The vector $\frakX_t\one_{t}$ may be formed at almost
negligible cost along with setting up $\frakX_t$. So there is no noteworthy difference in the cost of matrix vector multiplications between the two systems and no
structural advantages are lost when working with $H$ instead of the full system. We will therefore concentrate on developing a preconditioner for $H$.

For this note that $H$ of \eqref{eq:SchurH} arises from adding a
Gram matrix to a positive diagonal,
\begin{displaymath}
  H=D+VV^\top,
\end{displaymath}
where (recall $\upd{\mathfrak{H}}=D_\upd{\mathfrak{H}}+V_\upd{\mathfrak{H}}V_\upd{\mathfrak{H}}^\top$)
\begin{equation}\label{eq:tildeV}
  D=D_\upd{\mathfrak{H}}+D_y \quad\text{and}\quad V=\left[V_\upd{\mathfrak{H}}, A^\top D_w^{\frac12}, B^\top\frakX_t^{\frac12}(I-\tfrac{\frakX_t^{\frac12}\one_{t}(\frakX_t^{\frac12}\one_{t})^\top}{\zeta^{-1}\sigma+\one_t^\top\frakX_t\one_t})^{\frac12}\right].
\end{equation}
Note that the multiplication of $V$ with a vector requires only a little bit more than half the number of operations of multiplying $H$ (or the full KKT matrix) with a vector. This suggests to explore possibilities of finding low rank approximations of $V$ for preconditioning. 

\section{Low rank preconditioning a Gram-matrix plus positive diagonal}\label{S:lowrankprecond}

Consider a matrix
\begin{displaymath}
  H=D+VV^\top
\end{displaymath}
with a positive definite matrix $D\in\Sym^m_{++}$ and
$V\in\R^{m\times n}$. In our application $D$ is diagonal, but the
results apply for general $D\succ 0$. This is applicable in practice
as long as $D^{-1}$ can be applied efficiently to vectors.
Matrix $V$ is assumed to be given by a matrix-vector multiplication
oracle, \ie $V$ and $V^\top$ may be multiplied by vectors but the
matrix does not have to be available explicitly.  

For motivating the following preconditioning approaches, first
consider \upd{(without actually computing it)} the singular value decomposition of
$$D^{-\frac12}V=Q_H\left[\begin{array}{c}\Sigma\\
    0\end{array}\right]P_H^\top$$ with orthogonal
$Q_H\in\R^{m\times m}$, diagonal
$\Sigma=\Diag(\sigma_1,\dots,\sigma_n)$ ordered nonincreasingly by
$\sigma_1\ge\cdots\ge\sigma_n\ge 0$ and orthogonal
$P_H\in\R^{n\times n}$ (for convenience, it is assumed that $n\le
m$). Then
\begin{displaymath}
  H=D^{\frac12}
  Q_H\left[\begin{array}{cc}I_{ n}+\Sigma^2& 0 \\ 0 & I_{m- n}\end{array}\right]
   Q_H^\top D^{\frac12},
\end{displaymath}
\begin{displaymath}
  H^{-1}=D^{-\frac12}
Q_H\left[\begin{array}{cc}(I_{ n}+\Sigma^2)^{-1}& 0 \\ 0 &
                                                                      I_{m- n}\end{array}\right]
Q_H^\top D^{-\frac12}.
\end{displaymath}
When $\Sigma$ is replaced by the $k$ largest singular values, this gives rise to a
good ``low rank'' preconditioner, see Theorem~\ref{T:kappa} below.  Computing the full matrix
$D^{-\frac12}V$ and its singular value decomposition will in general
be too costly or even impossible.  Instead the general idea is to work
with $D^{-\frac12}V\Omega$ for some random or deterministic choice of
$\Omega\in\R^{n\times k}$. 

Multiplying by a random $\Omega$ may be thought of as giving rise to a
subspace approximation in the style of Johnson-Lindenstrauss,
cf.~\cite{Achlioptas2001,DasguptaGupta2002}, and this formed the
starting point of this investigation. The actual randomized approach
and analysis, however, mainly builds on
\cite{HalkoMartinssonTropp2011} and the bounding techniques presented
there. For the deterministic preconditioning variant the
recent work \cite{HabibiKavandKocvaraStingl2021} provided strong
guidance for analyzing the condition number.

Here, $\Omega$ will mostly consist of orthonormal columns. Yet it is
instructive to consider more general cases, as well.
An arbitrary $\Omega\in\R^{n\times k}$ gives rise to the preconditioner
\begin{equation}\label{eq:HOmega}
  {\pcH{\Omega}}:=D+V\Omega\Omega^\top V=D^{\frac12}
  Q_H\left[\begin{array}{cc}I_{ n}+\Sigma P_H^\top\Omega\Omega^\top P_H\Sigma& 0 \\ 0 & I_{m- n}\end{array}\right]
   Q_H^\top D^{\frac12}.
\end{equation}
Putting $\pcG{\Omega}:=D^{\frac12}Q_H
\begin{sbmatrix}
  I_{ n}+\Sigma P_H^\top\Omega\Omega^\top P_H\Sigma & 0  \\ 0 & I_{m-n}
\end{sbmatrix}^{\frac12}$ we have ${\pcH{\Omega}=\pcG{\Omega}\pcG{\Omega}^\top}$. The preconditioner is better the
closer $\upd{\pcG{\Omega}}^{-1}H\upd{\pcG{\Omega}}^{-T}$ is to the identity. In the analysis of convergence rates, see \eg \cite{ElmanSilvesterWathen2006}, this enters via the condition number 
\begin{displaymath}
  \kappa_{\Omega}:=\frac{\lmax(\upd{\pcG{\Omega}}^{-1}H\upd{\pcG{\Omega}}^{-T})}{\lmin(\upd{\pcG{\Omega}}^{-1}H\upd{\pcG{\Omega}}^{-T})}=\frac{\lmax(H^{\frac12}{\pcH{\Omega}}^{-1}H^{\frac12})}{\lmin(H^{\frac12}{\pcH{\Omega}}^{-1}H^{\frac12})}=\frac{\lmax({\pcH{\Omega}}^{-\frac12}H{\pcH{\Omega}}^{-\frac12})}{\lmin({\pcH{\Omega}}^{-\frac12}H{\pcH{\Omega}}^{-\frac12})}.
\end{displaymath}
In this, the equations follow from $BB^\top$ and $B^\top B$ having the same eigenvalues for $B\in\R^{n\times n}$. 
\begin{theorem}\label{T:kappa} Let $H=D+VV^\top\in \Sym^m_{++}$ with positive
  definite $D\in\Sym^m_{++}$ and $V\in \R^{m\times n}$ with
  $n<m$ and singular value decomposition
  $D^{-\frac12}V=Q_H\Sigma P_H^\top$, $Q_H^\top Q_H=I_m$, $P_H^\top
  P_H=I_n$, $\Sigma=\Diag(\sigma_1\ge\dots\ge\sigma_n)\in\Sym^n_+$. For  $\Omega\in\R^{n\times k}$
  the preconditioner ${\pcH{\Omega}}$ of \eqref{eq:HOmega} results in condition number  
  \begin{eqnarray*}
    \kappa_\Omega&=&\frac{\max\{1,\lmax((I_{ n}+\Sigma^2)^{\frac12}(I_{ n}+\Sigma P_H^\top \Omega\Omega^\top P_H\Sigma)^{-1}(I_{ n}+\Sigma^2)^{\frac12})\}}{\min\{1,\lmin((I_{ n}+\Sigma^2)^{\frac12}(I_{ n}+\Sigma P_H^\top\Omega\Omega^\top P_H\Sigma)^{-1}(I_{ n}+\Sigma^2)^{\frac12})\}}\\
    &=&\frac{\max\{1,\lmax((I_{ n}+\Sigma^2)^{-\frac12}(I_{ n}+\Sigma P_H^\top \Omega\Omega^\top P_H\Sigma)(I_{ n}+\Sigma^2)^{-\frac12})\}}{\min\{1,\lmin((I_{ n}+\Sigma^2)^{-\frac12}(I_{ n}+\Sigma P_H^\top\Omega\Omega^\top P_H\Sigma)(I_{ n}+\Sigma^2)^{-\frac12})\}}
  \end{eqnarray*}
  In particular, for $0\le k<n$ and $\Omega=(P_H)_{\bullet,[1,\dots,k]}$ the condition number's value is $1+\sigma_{k+1}^2$.
\end{theorem}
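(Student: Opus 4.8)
The plan is to diagonalize both $H$ and $H_\Omega$ simultaneously by the single invertible matrix $M:=Q_H^\top D^{\frac12}$, so that $\kappa_\Omega$ becomes the condition number of an explicit $n\times n$ matrix padded by an identity block of size $m-n$. First I would record, exactly as in the derivation of \eqref{eq:HOmega} just before the theorem and reading $\Sigma$ and $\begin{sbmatrix}\Sigma\\0\end{sbmatrix}$ interchangeably as in the display there, that
\begin{displaymath}
 H=M^\top\begin{sbmatrix}I_n+\Sigma^2&0\\0&I_{m-n}\end{sbmatrix}M,\qquad
 H_\Omega=M^\top\begin{sbmatrix}I_n+\Sigma P_H^\top\Omega\Omega^\top P_H\Sigma&0\\0&I_{m-n}\end{sbmatrix}M,
\end{displaymath}
using $P_H^\top P_H=I_n$ to collapse the SVD and the zero rows of $\begin{sbmatrix}\Sigma\\0\end{sbmatrix}$ to produce the trailing identity block. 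Since $\Sigma P_H^\top\Omega\Omega^\top P_H\Sigma\succeq0$, the middle factor of $H_\Omega$ is positive definite, hence $H_\Omega\succ0$ and $H_\Omega^{-1}$ exists.

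Next I would invoke the identity for $\kappa_\Omega$ recorded before the theorem, namely that it is the ratio of the extreme eigenvalues of $H_\Omega^{-\frac12}HH_\Omega^{-\frac12}$, equivalently of $H_\Omega^{-1}H$ (these are similar). The key point is that $M$ cancels:
\begin{displaymath}
 H_\Omega^{-1}H=M^{-1}\begin{sbmatrix}(I_n+\Sigma P_H^\top\Omega\Omega^\top P_H\Sigma)^{-1}(I_n+\Sigma^2)&0\\0&I_{m-n}\end{sbmatrix}M,
\end{displaymath}
so the spectrum of $H_\Omega^{-1}H$ is $\{1\}$ (with multiplicity at least $m-n$) together with the spectrum of the $n\times n$ block $(I_n+\Sigma P_H^\top\Omega\Omega^\top P_H\Sigma)^{-1}(I_n+\Sigma^2)$. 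Conjugating this block by $(I_n+\Sigma^2)^{\frac12}$ turns it into the symmetric positive definite matrix $C:=(I_n+\Sigma^2)^{\frac12}(I_n+\Sigma P_H^\top\Omega\Omega^\top P_H\Sigma)^{-1}(I_n+\Sigma^2)^{\frac12}$ with the same eigenvalues, whence $\lmax(H_\Omega^{-1}H)=\max\{1,\lmax(C)\}$ and $\lmin(H_\Omega^{-1}H)=\min\{1,\lmin(C)\}$; this is the first displayed formula. The second formula then follows by passing to $C^{-1}=(I_n+\Sigma^2)^{-\frac12}(I_n+\Sigma P_H^\top\Omega\Omega^\top P_H\Sigma)(I_n+\Sigma^2)^{-\frac12}$ and using $\lmax(C)=1/\lmin(C^{-1})$, $\lmin(C)=1/\lmax(C^{-1})$ together with $\max\{1,1/s\}=1/\min\{1,s\}$ and $\min\{1,1/s\}=1/\max\{1,s\}$ for $s>0$.

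For the final claim I would take $\Omega=(P_H)_{\bullet,[1,\dots,k]}$, so $P_H^\top\Omega=\begin{sbmatrix}I_k\\0\end{sbmatrix}$ and $\Sigma P_H^\top\Omega\Omega^\top P_H\Sigma=\Diag(\sigma_1^2,\dots,\sigma_k^2,0,\dots,0)$; then every matrix involved is diagonal and commuting, giving $C=\Diag(1,\dots,1,1+\sigma_{k+1}^2,\dots,1+\sigma_n^2)$, so that $\lmax(C)=1+\sigma_{k+1}^2\ge1$ and $\lmin(C)\ge1$, and therefore $\kappa_\Omega=(1+\sigma_{k+1}^2)/1$ as claimed (the case $k=0$, where $C=I_n+\Sigma^2$, is covered since then $\sigma_{k+1}=\sigma_1$). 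I do not expect a genuine obstacle: the only thing requiring care is the bookkeeping of the eigenvalue $1$ contributed by the trailing $I_{m-n}$ block (and, when $k\ge1$, by the leading $I_k$ inside $C$), which is precisely what produces the $\max\{1,\cdot\}$ and $\min\{1,\cdot\}$ in the statement.
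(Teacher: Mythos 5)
Your proposal is correct and follows essentially the same route as the paper: both arguments block-diagonalize $H$ and $H_\Omega$ in the SVD coordinates $D^{\frac12}Q_H$, let the trailing $I_{m-n}$ block contribute the eigenvalue $1$ that produces the $\max\{1,\cdot\}$ and $\min\{1,\cdot\}$, identify the eigenvalues of the remaining $n\times n$ block with those of the symmetric matrix $(I_n+\Sigma^2)^{\frac12}(I_n+\Sigma P_H^\top\Omega\Omega^\top P_H\Sigma)^{-1}(I_n+\Sigma^2)^{\frac12}$, and obtain the second formula from $\lmax(A)=1/\lmin(A^{-1})$. The only cosmetic difference is that you work with the similar matrix $H_\Omega^{-1}H$ instead of the paper's congruence $G_\Omega^{-1}HG_\Omega^{-\top}$, which changes nothing of substance.
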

\begin{proof} For $\pcG{\Omega}$ as above direct computation yields
\begin{eqnarray*}
  \upd{\pcG{\Omega}}^{-1}H\upd{\pcG{\Omega}}^{-T}&=&\begin{sbmatrix}
  I_{ n}+\Sigma\Omega\Omega^\top\Sigma & 0  \\ 0 & I_{m-n}
\end{sbmatrix}^{-\frac12}\begin{sbmatrix}
  I_{ n}+\Sigma^2 & 0  \\ 0 & I_{m-n}
\end{sbmatrix}\begin{sbmatrix}
  I_{ n}+\Sigma\Omega\Omega^\top\Sigma & 0  \\ 0 & I_{m-n}
\end{sbmatrix}^{-\frac12}\\
 &=&\begin{sbmatrix}
  (I_{ n}+\Sigma\Omega\Omega^\top\Sigma)^{-\frac12}(I_{ n}+\Sigma^2)(I_{ n}+\Sigma\Omega\Omega^\top\Sigma)^{-\frac12} & 0  \\ 0 & I_{m-n}
\end{sbmatrix}.                      
\end{eqnarray*}
The eigenvalues of $(I_{ n}+\Sigma\Omega\Omega^\top\Sigma)^{-\frac12}(I_{ n}+\Sigma^2)(I_{ n}+\Sigma\Omega\Omega^\top\Sigma)^{-\frac12}$ coincide with
those of $(I_{ n}+\Sigma^2)^{\frac12}(I_{ n}+\Sigma\Omega\Omega^\top\Sigma)^{-1}(I_{ n}+\Sigma^2)^{\frac12}$, because for $B=(I_{ n}+\Sigma\Omega\Omega^\top\Sigma)^{-\frac12}(I_{ n}+\Sigma^2)^{\frac12}$ the two matrices are $BB^\top$ and $B^\top B$. This gives rise to the first line. The second follows because for positive definite $A$ there holds $\lmax(A)=1/\lmin(A^{-1})$ and $\lmin(A)=1/\lmax(A^{-1})$. 
\end{proof}
Consider now a subspace spanned by $k$ orthonormal columns collected in
some matrix $P_ \ell\in \R^{n\times k}$ which hopefully generates most of
the large directions of  $D^{-\frac12}V$. In this orthonormal case a simpler bound on the condition number may be obtained by following the argument of Th.~5.1 in \cite{HabibiKavandKocvaraStingl2021}.
\begin{theorem}\label{T:deterministiccond}
  Let $H=D+VV^\top$ with positive definite $D\in\Sym^m_{++}$ and
  general $V\in\R^{m\times n}$, let $P=[\upd{\bar P},\upd{\underline{P}}]\in\R^{n\times n}$,
  $PP^\top=I_n$. Preconditioner ${\pcH{\bar P}}=D+V\upd{\bar P} \upd{\bar P}^\top
  V^\top$ has condition number $\kappa_{\upd{\bar P}}\le
  1+\lmax({\pcH{\bar P}}^{-\frac12}V\upd{\underline{P}}\,\upd{\underline{P}}^\top V^\top
  {\pcH{\bar P}}^{-\frac12})$. Equality holds if and only if $\rank(V\upd{\underline{P}})<m$. 
\end{theorem}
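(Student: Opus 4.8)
The plan is to exploit the orthogonality relation $P_\ell P_\ell^\top+P_sP_s^\top=I_n$, which holds because $PP^\top=I_n$, in order to write $H$ as a low-rank perturbation of the preconditioner. First I would observe that
\[
VV^\top=V(P_\ell P_\ell^\top+P_sP_s^\top)V^\top=VP_\ell P_\ell^\top V^\top+VP_sP_s^\top V^\top,
\]
so that $H=H_{P_\ell}+VP_sP_s^\top V^\top$. Since $H_{P_\ell}=D+VP_\ell P_\ell^\top V^\top$ is the sum of the positive definite matrix $D$ and a positive semidefinite term, it is positive definite, hence $H_{P_\ell}^{-1/2}$ is well defined; conjugating the identity above by it gives
\[
H_{P_\ell}^{-1/2}HH_{P_\ell}^{-1/2}=I_m+N,\qquad N:=H_{P_\ell}^{-1/2}VP_sP_s^\top V^\top H_{P_\ell}^{-1/2}\succeq 0.
\]

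Next I would recall, exactly as in the discussion preceding Theorem~\ref{T:kappa} (write $H_{P_\ell}=GG^\top$ with $G=H_{P_\ell}^{1/2}$ and use that $BB^\top$ and $B^\top B$ have the same spectrum), that $\kappa_{P_\ell}=\lmax(I_m+N)/\lmin(I_m+N)$. Because $N$ is positive semidefinite, $\lmax(I_m+N)=1+\lmax(N)$ and $\lmin(I_m+N)=1+\lmin(N)\ge 1$, so
\[
\kappa_{P_\ell}=\frac{1+\lmax(N)}{1+\lmin(N)}\le 1+\lmax(N)=1+\lmax\!\left(H_{P_\ell}^{-1/2}VP_sP_s^\top V^\top H_{P_\ell}^{-1/2}\right),
\]
which is the claimed bound.

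For the equality characterization I would observe that the single inequality used above is $1+\lmin(N)\ge 1$, which is tight precisely when $\lmin(N)=0$, i.e.\ when $N$ is singular. As $H_{P_\ell}^{-1/2}$ is invertible, $N$ is singular if and only if $VP_sP_s^\top V^\top=(VP_s)(VP_s)^\top$ is singular, and by the standard identity $\rank(AA^\top)=\rank(A)$ this happens exactly when $\rank(VP_s)<m$. I do not anticipate a genuine obstacle here; the only points requiring a little care are verifying that $H_{P_\ell}$ is actually positive definite (so the conjugation and the formula for $\kappa_{P_\ell}$ are legitimate) and making sure the chain $\lmin(N)=0\iff N\text{ singular}\iff\rank(VP_s)<m$ is a genuine equivalence rather than merely an implication.
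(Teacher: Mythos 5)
Your proposal is correct and follows essentially the same route as the paper: write $H=H_{P_\ell}+VP_sP_s^\top V^\top$ via $P_\ell P_\ell^\top+P_sP_s^\top=I_n$, conjugate by $H_{P_\ell}^{-\frac12}$ to get $I_m$ plus a positive semidefinite term, and read off the bound from $\lmin\ge 1$ with equality exactly when that term is singular, i.e.\ $\rank(VP_s)<m$. Your additional remarks (positive definiteness of $H_{P_\ell}$, rank preservation under the invertible congruence) only make explicit what the paper leaves implicit.
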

\begin{proof}
  Because $H=D+[V\upd{\bar P},V\upd{\underline{P}}][V\upd{\bar P},V\upd{\underline{P}}]^\top={\pcH{\bar P}}+V\upd{\underline{P}}\,\upd{\underline{P}}^\top V^\top$ we have
  \begin{displaymath}
    {\pcH{\bar P}}^{-\frac12}H {\pcH{\bar P}}^{-\frac12}= I_m+{\pcH{\bar P}}^{-\frac12}V\upd{\underline{P}}\,\upd{\underline{P}}^\top V^\top {\pcH{\bar P}}^{-\frac12}.
  \end{displaymath}
  The second summand is positive semidefinite with minimum eigenvalue
  0 if and only if $\rank(V\upd{\underline{P}})<m$. Thus,
  \begin{eqnarray*}
    \lmin({\pcH{\bar P}}^{-\frac12}H{\pcH{\bar P}}^{-\frac12})&\ge &1,\\
    \lmax({\pcH{\bar P}}^{-\frac12}H{\pcH{\bar P}}^{-\frac12})&= & 1+\lmax({\pcH{\bar P}}^{-\frac12}V\upd{\underline{P}}\,\upd{\underline{P}}^\top V^\top {\pcH{\bar P}}^{-\frac12}). 
  \end{eqnarray*}
  By $\kappa({\pcH{\bar P}}^{-\frac12}H{\pcH{\bar P}}^{-\frac12})=\frac{\lmax({\pcH{\bar P}}^{-\frac12}H{\pcH{\bar P}}^{-\frac12})}{\lmin({\pcH{\bar P}}^{-\frac12}H{\pcH{\bar P}})}$ the result is proved.
\end{proof}
Building on these two theorems we first analyze randomized approaches
that do not make any assumptions on structural properties of
$D^{-\frac12}V$ but only require a multiplication oracle. Afterwards
we present a deterministic approach that exploits some knowledge of
the bundle subproblem and the interior point algorithm. The
corresponding routines supply a $\hat V=V\Omega$. The actual
preconditioning routine, Alg.~\ref{alg:precond} below, does not use
${\pcH{\Omega}}$ directly, but a truncated preconditioner
${\pcH{\Omega\hat P}}$
that drops all singular values of $D^{-\frac12}V\Omega$ that are
less than one. The inverse is then formed via a Woodbury-formula, see
\cite[\S0.7.4]{HornJohnson85}.  Note, depending on the expected number
of calls to the routine and the structure preserved in $\hat V$, it may
or may not pay off to also precompute $\hat V\hat P$. For diagonal $D$
and dense $\hat V$ the cost of applying this preconditioner is
$O(m+mk+k\hat k)$.

\begin{algorithm}[Preconditioning by truncated
  ${\pcH{\Omega}}=D+V\Omega(V\Omega)^\top$]\label{alg:precond}\quad\\
\textbf{Input:} $v\in\R^m$, $D\in\Sym^n_{++}$, precomputed $\hat V=V\Omega\in\R^{n\times k}$
and, for $\hat V^\top D^{-1}\hat V=P\Diag(\hat
\lambda_1\ge\dots\ge\hat \lambda_k)P^\top$, 
$\hat k=\max\{0,i\colon\hat\lambda_i\ge 1\}$,
$\hat \Lambda=\Diag(\hat\lambda_1,\dots,\hat\lambda_{\hat k})$, $\hat
P=P_{\bullet,[1,\dots,\hat k]}$.\\
  \textbf{Output:} ${\pcH{\Omega\hat P}}^{-1}v$.
  \begin{compactenum}
  \item $v\gets D^{-1}v$.
  \item If $\hat k>0$ set $v \gets v - D^{-1}\hat
    V\hat P(I+\hat\Lambda)^{-1}\hat P^\top\hat V^\top v$.
  \item return $v$.
  \end{compactenum}
\end{algorithm}

\subsection{Preconditioning by Random Subspaces}\label{S:randprecond}

For the random subspace approach fix some
$k\in\N$ with $2\le k<n$\upd{. At first consider} $\Omega$ \upd{to} be a\upd{n} $n\times k$ random matrix whose elements are independently
identically distributed by the normal distribution $\mathcal{N}(0,\frac1{k})$. For this
$\Omega$ consider the low rank approximation $D^{-\frac12} V\Omega= 
Q_H\mat{c}{ \Sigma\\ 0}
P_H^\top\Omega$. Because the normal distribution is invariant under orthogonal
transformations, we may assume $ P_H=I$ and analyze the setting  $
Q_H\mat{c}{ \Sigma\\ 0}\Omega$ giving rise to the low rank approximation by the random
matrix
\begin{displaymath}
  {\pcH{\Omega}}=D^{\frac12}
  Q_H\left[\begin{array}{cc}I_{ n}+\Sigma\Omega\Omega^\top\Sigma& 0 \\ 0 & I_{m- n}\end{array}\right]
   Q_H^\top D^{\frac12}.
\end{displaymath}
In view of Theorem~\ref{T:kappa} such a preconditioner is good if $(I+\Sigma^2)^{-\frac12}(I+\Sigma
\Omega\Omega^\top\Sigma)(I+\Sigma^2)^{-\frac12}$ is close to the
identity. Based on the Johnson-Lindenstrauss interpretation,
it seems likely that large portions of the spectrum will be close to
one. This can be justified to some extent by studying the moments of
the Ritz values.
\begin{theorem}\label{T:moments}
  Let $\Omega\in\R^{ n\times k}$ have its elements i.i.d.\ according to
  the normal distribution $\mathcal{N}(0,\frac1{k})$, then for any $x\in\R^{ n}$ the
quadratic form
\begin{displaymath}
  q(x)=x^\top(I+\Sigma^2)^{-\frac12}(I+\Sigma
  \Omega\Omega^\top\Sigma)(I+\Sigma^2)^{-\frac12}x
\end{displaymath}
has expected value $\E(q(x))=\|x\|^2$ and variance $\Var(q(x))=\frac{2}{k}\big(\sum_{i=1}^{n}\frac{\sigma_i^2}{1+\sigma_i^2}x_i^2\big)^2$.
\end{theorem}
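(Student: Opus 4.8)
The plan is to diagonalize the quadratic form by an explicit linear change of variables and then to reduce the whole statement to the first two moments of a chi-square random variable.

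First I would substitute $y=(I+\Sigma^2)^{-\frac12}x$, so that $y_i=x_i/\sqrt{1+\sigma_i^2}$, and $z=\Sigma y$, so that $z_i=\sigma_i x_i/\sqrt{1+\sigma_i^2}$. Then
\[
  q(x)=y^\top(I+\Sigma\Omega\Omega^\top\Sigma)y=\|y\|^2+\|\Omega^\top\Sigma y\|^2=\|y\|^2+\|\Omega^\top z\|^2,
\]
so the first term is deterministic and all the randomness is concentrated in $\|\Omega^\top z\|^2$.

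Next I would exploit that the $j$-th entry of $\Omega^\top z$ equals $\sum_{i=1}^n\Omega_{ij}z_i$, a fixed linear combination of the i.i.d.\ $\calN(0,\tfrac1k)$ entries in the $j$-th column of $\Omega$; hence $(\Omega^\top z)_j\sim\calN(0,\|z\|^2/k)$, and since distinct columns of $\Omega$ are independent, the $k$ entries of $\Omega^\top z$ are mutually independent. Therefore $\tfrac{k}{\|z\|^2}\|\Omega^\top z\|^2$ has a chi-square distribution with $k$ degrees of freedom, which gives $\E(\|\Omega^\top z\|^2)=\|z\|^2$ and $\Var(\|\Omega^\top z\|^2)=\tfrac{\|z\|^4}{k^2}\cdot 2k=\tfrac{2}{k}\|z\|^4$. (Should one wish to avoid invoking chi-square moments, the same two values follow from $\E((\Omega^\top z)_j^2)=\|z\|^2/k$ and $\E((\Omega^\top z)_j^4)=3\|z\|^4/k^2$ for a centred Gaussian, together with independence across $j$.)

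Finally I would assemble the pieces. Since $\|y\|^2+\|z\|^2=\sum_i(1+\sigma_i^2)y_i^2=\sum_i x_i^2=\|x\|^2$, we obtain $\E(q(x))=\|y\|^2+\E(\|\Omega^\top z\|^2)=\|y\|^2+\|z\|^2=\|x\|^2$. Because $\|y\|^2$ is constant, $\Var(q(x))=\Var(\|\Omega^\top z\|^2)=\tfrac{2}{k}\|z\|^4$, and substituting $\|z\|^2=\sum_i\sigma_i^2y_i^2=\sum_i\tfrac{\sigma_i^2}{1+\sigma_i^2}x_i^2$ yields the claimed variance. There is no genuine obstacle here: the argument is routine once the change of variables is made, and the only points demanding a little care are the independence of the components of $\Omega^\top z$ (needed for the variance) and keeping track of the $\tfrac1k$ scaling of the Gaussian entries.
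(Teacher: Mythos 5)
Your proof is correct, and it takes a genuinely different (and slicker) route than the paper. The paper works with the quadratic form coordinatewise: it expands $q(x)$ into sums over the Gaussian entries $\omega_{hi}$ and computes the second moment by a careful case analysis of the quadruple sum ($h\neq h'$ versus $h=h'$, and the index pairings $i=i',j=j'$ etc.), using only the elementary moments $\E\omega_{hi}^2=\tfrac1k$ and $\E\omega_{hi}^4=\tfrac3{k^2}$, before finishing with $\Var X=\E(X^2)-(\E X)^2$. You instead isolate the randomness as $\|\Omega^\top z\|^2$ with $z=\Sigma(I+\Sigma^2)^{-\frac12}x$, note that the $k$ components of $\Omega^\top z$ are independent $\calN(0,\|z\|^2/k)$ variables (independence coming from the independence of the columns of $\Omega$), and read off both moments from the chi-square distribution with $k$ degrees of freedom. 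This replaces the paper's combinatorial bookkeeping of fourth-moment terms by a single distributional identity, at the modest cost of invoking (or re-deriving, as you indicate in your parenthetical remark) the chi-square moments; the paper's expansion, while longer, stays entirely at the level of individual Gaussian entries. Both arguments yield exactly the claimed mean $\|x\|^2$ and variance $\tfrac2k\big(\sum_i\tfrac{\sigma_i^2}{1+\sigma_i^2}x_i^2\big)^2$, and your identification of $\|y\|^2+\|z\|^2=\|x\|^2$ matches the way the deterministic and random contributions recombine in the paper's computation.
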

\begin{proof} Let $\Omega=(\omega_{ij})$ with i.i.d.\ elements
  $\omega_{ij}$ from $\mathcal{N}(0,\frac{1}{k})$. Recall that
  $\E(\omega_{ij})=0$, $\E(\omega_{ij}^2)=\frac1k$,
  $\E(\omega_{ij}^3)=0$, $\E(\omega_{ij}^4)=3/k^2$ and that for
  independent random variables $X,Y$ there holds $\E(XY)=\E(X)\E(Y)$.

  The expected value of the quadratic form evaluates to
\begin{eqnarray*}
  \lefteqn{\E\bigg(x^\top(I+\Sigma^2)^{-\frac12}(I+\Sigma
  \Omega\Omega^\top\Sigma)(I+\Sigma^2)^{-\frac12}x\bigg)=}\\
  &=&\sum_{i=1}^{
      n}\frac{1}{1+\sigma_i^2}x_i^2+\E\bigg(\sum_{h=1}^k\big(\sum_{i=1}^{
      n}\frac{\sigma_i}{\sqrt{1+\sigma_i^2}} \omega_{hi} x_{i}\big)^2\bigg)\\
  &=&\sum_{i=1}^{
      n}\frac{1}{1+\sigma_i^2} x_i^2+\sum_{h=1}^k\sum_{i=1}^{
      n}\frac{\sigma_i^2}{1+\sigma_i^2}x_{i}^2\E \omega_{hi}^2 \\
  &=& \sum_{i=1}^{ n} x_i^2\bigg(\frac{1}{1+\sigma_i^2} +
      \frac{\sigma_i^2}{1+\sigma_i^2}\sum_{h=1}^k\E \omega_{hi}^2\bigg)\\
  &=& \sum_{i=1}^{ n} x_i^2\bigg(\frac{1}{1+\sigma_i^2} +
      \frac{\sigma_i^2}{1+\sigma_i^2}\sum_{h=1}^k\frac1k\bigg) =
      \sum_{i=1}^{ n} x_i^2= \|x\|^2.
\end{eqnarray*}

For determining the variance, the second moment may be computed as follows.
\begin{eqnarray*}
\lefteqn{\E\bigg(\bigg[\sum_{h=1}^k\big(\sum_{i=1}^{
      n}\frac{\sigma_i}{\sqrt{1+\sigma_i^2}} \omega_{hi} x_{i}\big)^2\bigg]^2\bigg)=}\\
  &=&\E\bigg(\bigg[\sum_{h=1}^k\sum_{i=1}^{
      n}\sum_{i'=1}^{
      n}\frac{\sigma_i}{\sqrt{1+\sigma_i^2}}\frac{\sigma_{i'}}{\sqrt{1+\sigma_{i'}^2}}\omega_{hi} x_{i}\omega_{hi'} x_{i'}\bigg]^2\bigg)\\
  &=&\E\bigg(\sum_{h,h'=1}^k~
      \sum_{i,i',j,j'=1}^{ n} \\
  &&\phantom{\E\bigg(}\frac{\sigma_i}{\sqrt{1+\sigma_i^2}}\frac{\sigma_{i'}}{\sqrt{1+\sigma_{i'}^2}}\frac{\sigma_j}{\sqrt{1+\sigma_j^2}}\frac{\sigma_{j'}}{\sqrt{1+\sigma_{j'}^2}}x_{i} x_{i'}x_{j} x_{j'}\omega_{hi}\omega_{hi'}\omega_{h'j}\omega_{h'j'}\bigg).
\end{eqnarray*}
In the cases of $h\neq h'$ only terms with $i=i'$ and $j=j'$ are not zero. These
evaluate to $\E \omega_{hi}^2 \E \omega_{h'j}^2=\frac1{k^2}$ giving
\begin{eqnarray*}
  \frac{k(k-1)}{k^2}\sum_{i=1}^{
      n}\sum_{j=1}^{
      n}\frac{\sigma_i^2}{1+\sigma_i^2}x_{i}^2
     \frac{\sigma_j^2}{1+\sigma_j^2}x_{j}^2.
\end{eqnarray*}
For each $h=h'$ there remain $(i=i'=j=j')$ with value $\E \omega_{hi}^4=\frac3{k^2}$,
\begin{eqnarray*}
  \frac{3k}{k^2}\sum_{i=1}^{ n}\bigg(\frac{\sigma_i^2}{1+\sigma_i^2}\bigg)^2x_{i}^4,
\end{eqnarray*}
and the three pairings $(i=i',j=j')$,
$(i=j,i'=j')$ and $(i=j',i'=j)$ each with value $\frac1{k^2}$,
\begin{eqnarray*}
  \frac{3k}{k^2}\sum_{i\neq j}\frac{\sigma_i^2}{1+\sigma_i^2}x_{i}^2
     \frac{\sigma_j^2}{1+\sigma_j^2}x_{j}^2.
\end{eqnarray*}
Summing up these three expressions yields
\begin{displaymath}
  \E\bigg(\bigg[\sum_{h=1}^k\big(\sum_{i=1}^{
      n}\frac{\sigma_i}{\sqrt{1+\sigma_i^2}} \omega_{hi} x_{i}\big)^2\bigg]^2\bigg)=\frac{k^2+2k}{k^2}\bigg(\sum_{i=1}^{ n}\frac{\sigma_i^2}{1+\sigma_i^2}x_i^2\bigg)^2.
  \end{displaymath}
The result now follows from the usual $\Var X=\E(X^2)-(\E X)^2$ for any random variable $X$. 
\end{proof}
This suggests that even for relatively small $k$ the behavior of the
preconditioned system may be expected to be reasonably close to the
identity for a large portion of the directions.  The result, however,
does not seem to open a path towards good bounds on the condition
number.

A first possibility is offered by Theorem~\ref{T:deterministiccond}. 
Recall that for an arbitrary matrix $A\in\R^{m\times n}$ the projector
\begin{displaymath}
  \mathbf{P}_A=A(A^\top A)^\dagger A^\top
\end{displaymath}
projects any vector of $\R^m$ onto the range space of $A$ and
$\mathbf{P}$ depends only on this range space. Computationally
it may be determined by a $QR$-decomposition of $A=Q_AR_A$  with
orthogonal $Q_A\in\R^{m\times n'}$ for $n'=\rank(A)$ via
$\mathbf{P}_A=Q_AQ_A^\top$. The formula allows \upd{one} to verify $\mathbf{P}_A=\mathbf{P}_A^\top$, $\mathbf{P}_A\mathbf{P}_A=\mathbf{P}_A$ and $\mathbf{P}_AA=A$ by direct computation. Furthermore, for any $B\in\R^{n\times h}$ there holds $\mathbf{P}_{AB}\preceq \mathbf{P}_A\preceq I_m$ because of the containment relations between the ranges. 

In the following $\Omega\Omega^\top$ in ${\pcH{\Omega}}$ will be replaced by
the projector $\mathbf{P}_\Omega$. The random low rank approximation to be considered reads
\begin{displaymath}
  {\pcH{\mathbf{P}_\Omega}}=D^{\frac12}
  Q_H\left[\begin{array}{cc}I_{ n}+\Sigma \mathbf{P}_\Omega\Sigma& 0 \\ 0 & I_{m- n}\end{array}\right]
   Q_H^\top D^{\frac12}.
 \end{displaymath}
 \upd{The following deterministic result holds for any matrix $\Omega\in\R^{n\times k}$.}
\begin{corollary}
   Let $H=D+VV^\top\in \R^m$ with positive
   definite $D\in\Sym^m_{++}$ and $V\in \R^{m\times n}$.
  Given $\Omega\in\R^{n\times k}$, let
  $\mathbf{P}_\Omega=\Omega(\Omega^\top \Omega)^\dagger\Omega^\top$. For the
  preconditioner ${\pcH{\mathbf{P}_\Omega}}$ the condition number satisfies
  $\kappa_{\mathbf{P}_\Omega}\le 1+\|D^{-\frac12}V(I-\mathbf{P}_{\Omega})\|^2$,
  where $\|\cdot\|$ denotes the spectral norm. 
\end{corollary}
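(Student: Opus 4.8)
The plan is to reduce the claim to Theorem~\ref{T:deterministiccond} by exhibiting $\mathbf{P}_\Omega$ as $P_\ell P_\ell^\top$ for a matrix with orthonormal columns. Concretely, set $k'=\rank(\Omega)$ and take $P_\ell\in\R^{n\times k'}$ whose columns form an orthonormal basis of $\range(\Omega)$ (for instance from a thin $QR$ factorization of $\Omega$); then the defining formula gives $\mathbf{P}_\Omega=\Omega(\Omega^\top\Omega)^\dagger\Omega^\top=P_\ell P_\ell^\top$. Extend $P_\ell$ to an orthogonal matrix $P=[P_\ell,P_s]\in\R^{n\times n}$, so that $P_sP_s^\top=I_n-\mathbf{P}_\Omega$. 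Since $\mathbf{P}_\Omega$ is symmetric and idempotent, the preconditioner occurring in the corollary is $H_{\mathbf{P}_\Omega}=D+V\mathbf{P}_\Omega\mathbf{P}_\Omega^\top V^\top=D+VP_\ell P_\ell^\top V^\top$, which is precisely the matrix denoted $H_{P_\ell}$ in Theorem~\ref{T:deterministiccond}. Hence that theorem applies and yields $\kappa_{\mathbf{P}_\Omega}\le 1+\lmax\!\big(H_{\mathbf{P}_\Omega}^{-\frac12}VP_sP_s^\top V^\top H_{\mathbf{P}_\Omega}^{-\frac12}\big)$.

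It then remains to bound this eigenvalue term by $\|D^{-\frac12}V(I-\mathbf{P}_\Omega)\|^2$. First, using that $BB^\top$ and $B^\top B$ have the same nonzero eigenvalues, with $B=H_{\mathbf{P}_\Omega}^{-\frac12}VP_s$, rewrite $\lmax\!\big(H_{\mathbf{P}_\Omega}^{-\frac12}VP_sP_s^\top V^\top H_{\mathbf{P}_\Omega}^{-\frac12}\big)=\lmax\!\big(P_s^\top V^\top H_{\mathbf{P}_\Omega}^{-1}VP_s\big)$. Because $H_{\mathbf{P}_\Omega}=D+VP_\ell P_\ell^\top V^\top\succeq D$, we have $H_{\mathbf{P}_\Omega}^{-1}\preceq D^{-1}$, hence $P_s^\top V^\top H_{\mathbf{P}_\Omega}^{-1}VP_s\preceq P_s^\top V^\top D^{-1}VP_s$, and monotonicity of $\lmax$ under the Loewner order gives $\lmax\!\big(P_s^\top V^\top H_{\mathbf{P}_\Omega}^{-1}VP_s\big)\le\lmax\!\big(P_s^\top V^\top D^{-1}VP_s\big)$. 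Applying the $BB^\top$/$B^\top B$ identity once more, now with $B=D^{-\frac12}VP_s$, and using $P_sP_s^\top=I_n-\mathbf{P}_\Omega$, yields $\lmax\!\big(P_s^\top V^\top D^{-1}VP_s\big)=\lmax\!\big(D^{-\frac12}V(I-\mathbf{P}_\Omega)V^\top D^{-\frac12}\big)=\|D^{-\frac12}V(I-\mathbf{P}_\Omega)\|^2$. Chaining these (in)equalities gives the stated bound.

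There is no genuinely hard analytic step; the work is just bookkeeping around the projector, and the chain above should go through routinely. The one point that needs a little care is the passage through a possibly rank‑deficient $\Omega$: one must observe that $\mathbf{P}_\Omega$ really is the orthogonal projector onto $\range(\Omega)$ (which the pseudoinverse formula guarantees), so that a genuine orthonormal $P_\ell$ exists and Theorem~\ref{T:deterministiccond} is legitimately applicable; in the degenerate extreme $\rank(\Omega)=n$ one has $P_s$ empty, $H_{\mathbf{P}_\Omega}=H$, $\kappa_{\mathbf{P}_\Omega}=1$, and the right‑hand side equals $1$, so the inequality still holds. (If desired one can also note that Theorem~\ref{T:deterministiccond} actually gives equality whenever $\rank\big(V(I-\mathbf{P}_\Omega)\big)<m$, which is automatic as soon as $n\le m$ and $\rank(\Omega)\ge 1$, but the corollary only needs the inequality.)
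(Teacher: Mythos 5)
Your proof is correct and follows essentially the same route as the paper: write $\mathbf{P}_\Omega=P_\ell P_\ell^\top$ with orthonormal $P_\ell$, invoke Theorem~\ref{T:deterministiccond}, and then bound $\lmax(H_{P_\ell}^{-\frac12}VP_sP_s^\top V^\top H_{P_\ell}^{-\frac12})$ by the corresponding quantity with $D$ in place of $H_{P_\ell}$ using $H_{P_\ell}\succeq D$. The only difference is cosmetic: you pass through $H_{P_\ell}^{-1}\preceq D^{-1}$ together with the $BB^\top$ versus $B^\top B$ identity, while the paper argues via the equivalence $D^{-\frac12}AD^{-\frac12}\preceq\lambda I\Leftrightarrow A\preceq\lambda D\Rightarrow A\preceq\lambda H_{P_\ell}$; both rest on the same observation.
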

\begin{proof}
  Let $\mathbf{P}_\Omega=\upd{\bar P} \upd{\bar P}^\top$ with $\upd{\bar P}\in\R^{n\times
    k'}$ for some $k'\le k$ and $\upd{\bar P}^\top \upd{\bar P}=I_{k'}$. Add
  orthonormal columns $\upd{\underline{P}}$ so that $P=[\upd{\bar P},\upd{\underline{P}}]$ satisfies
  $PP^\top=I_n$. Note, $I-\mathbf{P}_\Omega=\upd{\underline{P}}\,\upd{\underline{P}}^\top$ is the projector onto
  the orthogonal complement. Use this choice in Theorem~\ref{T:deterministiccond},
  then ${\pcH{\mathbf{P}_\Omega}}={\pcH{\bar P}}\succeq D$. Observe that by $D\in \Sym^m_{++}$
  for any $\lambda\ge 0$, $A\in \Sym^m$  the relation $D^{-\frac12}AD^{-\frac12}\preceq
  \lambda I_m$ is equivalent to $A\preceq \lambda D$ and implies $A\preceq
  \lambda (D+V\upd{\bar P} \upd{\bar P}^\top V^\top)=\lambda {\pcH{\bar P}}$ which is
  equivalent to
  ${\pcH{\bar P}}^{-\frac12}A{\pcH{\bar P}}^{-\frac12}\preceq
  \lambda I_m$. Therefore  
  \begin{displaymath}
    \lmax(\pcH{\bar P}^{-\frac12}V\upd{\underline{P}}\,\upd{\underline{P}}^\top V^\top
    {\pcH{\bar P}}^{-\frac12})\le\lmax(D^{-\frac12}V\upd{\underline{P}}\,\upd{\underline{P}}^\top V^\top
    D^{-\frac12})=\|D^{-\frac12}V(I-\mathbf{P}_\Omega)\|^2.
  \end{displaymath}
\end{proof}
While this bound is rather straight forward to derive, it does not
seem strong enough to observe a reduced influence of the largest
singular values of $D^{-\frac12}V$.  Indeed, in its derivation only the
diagonal of ${\pcH{\mathbf{P}_\Omega}}$ was considered and the
influence of $V\Omega$ is lost.

In order to obtain stronger bounds, the rather involved techniques laid out in
\cite{HalkoMartinssonTropp2011} seem to be required.
The next steps and results follow their arguments closely. This time
the ${\pcH{\mathbf{P}_\Omega}}$-part is kept inverted in the analysis of the
condition number.

Because $I+\Sigma \upd{\mathbf{P}_{\Omega}}\Sigma\preceq I+\Sigma^2$ there holds
$$ (I+\Sigma^2)^{\frac12}(I+\Sigma
\upd{\mathbf{P}_{\Omega}}\Sigma)^{-1}(I+\Sigma^2)^{\frac12}\succeq I.$$ By Theorem~\ref{T:kappa} the condition number is bounded by
\begin{displaymath}
  \kappa_{\upd{\mathbf{P}_{\Omega}}}\le \lambda_{\max}( (I+\Sigma^2)^{\frac12}(I+\Sigma \upd{\mathbf{P}_{\Omega}}\Sigma)^{-1}(I+\Sigma^2)^{\frac12})
\end{displaymath}
and will attain it, whenever $n<m$.  In terms of $\Omega$, the best
possible outcome is an event resulting in
$\upd{\mathbf{P}_{\Omega}}=\mat{cc}{I_{k}&\upd{0} \\0 &\upd{0}}$ (see, \eg
\cite[7.4.52]{HornJohnson85}). It corresponds to the truncated SVD and
gives $\kappa_{
  \begin{sbmatrix}
I_{k}& \upd{0} \\0 & \upd{0}
\end{sbmatrix}
} = 1+\sigma_{k+1}^2$. Aiming for something more
realistic, one hopes for a good coverage of the first $k$ singular
values when oversampling with $\upd{\frakp}$ additional columns.

The first step in the analysis is to obtain a deterministic bound for
a fixed $\Omega\in\R^{ n\times(k+\upd{\frakp})}$ as outlined in
\cite[\S9.2]{HalkoMartinssonTropp2011}.
\begin{theorem}\label{th:deterministicbound}
  Given $\sigma_1\ge\dots\ge\sigma_{ n} \ge 0$ and a matrix $\Omega\in\R^{ n\times (k+\upd{\frakp})}$ with $k\le  n$ so that the
  first $k$ rows of $\Omega$ are linearly independent, split $\Sigma=\mat{cc}{\Sigma_1& 0\\ 0&\Sigma_2}$ into blocks $\Sigma_1=\Diag(\sigma_1,\dots,\sigma_k)$ and $\Sigma_2=\Diag(\sigma_{k+1},\dots,\sigma_{ n})$ and
$\Omega=\mat{c}{\Omega_1\\\Omega_2}$ into the first $k$ rows
$\Omega_1\in\R^{k\times(k+\upd{\frakp})}$ and the last $ n-k$ rows
$\Omega_2\in\R^{( n-k)\times k+\upd{\frakp}}$. Then
\begin{displaymath}
  \lambda_{\max}( (I+\Sigma^2)^{\frac12}(I+\Sigma \upd{\mathbf{P}_{\Omega}}\Sigma)^{-1}(I+\Sigma^2)^{\frac12})\le 2+ \sigma_{k+1}^2+\|(I+\Sigma_2^2)^{\frac12}\Omega_2\Omega_1^\dagger\|^2.
\end{displaymath}
\end{theorem}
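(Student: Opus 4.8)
The plan is to adapt the deterministic error analysis of \cite[\S9.2]{HalkoMartinssonTropp2011}. We must bound $\mu:=\lmax\big((I+\Sigma^2)^{\frac12}(I+\Sigma\mathbf{P}_\Omega\Sigma)^{-1}(I+\Sigma^2)^{\frac12}\big)$. First I would replace $\mathbf{P}_\Omega$ by the projector onto a subspace of $\range(\Omega)$ that is visible from the hypothesis. Since the first $k$ rows $\Omega_1$ of $\Omega$ are linearly independent, $\Omega_1\Omega_1^\dagger=I_k$, so $W:=\Omega\,\Omega_1^\dagger=\begin{sbmatrix}I_k\\ G\end{sbmatrix}$ with $G:=\Omega_2\Omega_1^\dagger$ satisfies $\range(W)\subseteq\range(\Omega)$, whence $\mathbf{P}_W\preceq\mathbf{P}_\Omega$ by the range-containment rule $\mathbf{P}_{AB}\preceq\mathbf{P}_A$ noted above. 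Therefore $T:=I+\Sigma\mathbf{P}_W\Sigma\preceq I+\Sigma\mathbf{P}_\Omega\Sigma$, $(I+\Sigma\mathbf{P}_\Omega\Sigma)^{-1}\preceq T^{-1}$, and since $BB^\top$ and $B^\top B$ share their eigenvalues, $\mu\le\lmax\big(T^{-1/2}(I+\Sigma^2)T^{-1/2}\big)$. As $\mathbf{P}_W$ is an orthogonal projector, $I+\Sigma^2=T+\Sigma(I-\mathbf{P}_W)\Sigma$ and $\Sigma(I-\mathbf{P}_W)\Sigma=\big((I-\mathbf{P}_W)\Sigma\big)^\top(I-\mathbf{P}_W)\Sigma$, so $T^{-1/2}(I+\Sigma^2)T^{-1/2}=I+CC^\top$ with $C:=(I-\mathbf{P}_W)\Sigma T^{-1/2}$, giving $\mu\le 1+\|C\|^2$. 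It then remains to show
\[
  \|(I-\mathbf{P}_W)\Sigma T^{-1/2}\|^2\le 1+\sigma_{k+1}^2+\|(I+\Sigma_2^2)^{1/2}G\|^2 .
\]

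For this I would make $\range(W)^\perp$ explicit: it is $\{\begin{sbmatrix}-G^\top x\\ x\end{sbmatrix}:x\in\R^{n-k}\}$, with $\Sigma\begin{sbmatrix}-G^\top x\\ x\end{sbmatrix}=Nx$ for $N:=\begin{sbmatrix}-\Sigma_1G^\top\\ \Sigma_2\end{sbmatrix}$ and $\big\|\begin{sbmatrix}-G^\top x\\ x\end{sbmatrix}\big\|^2=x^\top(I+GG^\top)x$. Hence
\[
  \|(I-\mathbf{P}_W)\Sigma T^{-1/2}\|^2=\max_{x\neq 0}\frac{x^\top N^\top T^{-1}N\,x}{x^\top(I+GG^\top)x}.
\]
Applying Sherman--Morrison--Woodbury to $T=I+(\Sigma W)(W^\top W)^{-1}(\Sigma W)^\top$, and using $W^\top W+(\Sigma W)^\top(\Sigma W)=W^\top(I+\Sigma^2)W=I+\Sigma_1^2+G^\top(I+\Sigma_2^2)G=:S$ together with $\Sigma W=\begin{sbmatrix}\Sigma_1\\ \Sigma_2G\end{sbmatrix}$, a direct computation gives
\[
  N^\top T^{-1}N=G\Sigma_1^2G^\top+\Sigma_2^2-A^\top S^{-1}A,\qquad A:=\Sigma_1^2G^\top-G^\top\Sigma_2^2=(I+\Sigma_1^2)G^\top-G^\top(I+\Sigma_2^2).
\]
Everything now reduces to bounding $G\Sigma_1^2G^\top-A^\top S^{-1}A$ from above in the Loewner order and then estimating $\lmax\big((I+GG^\top)^{-1}(\,\cdot\,)\big)$.

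The delicate step -- and, I expect, the main obstacle -- is the cancellation of the possibly huge $\sigma_1,\dots,\sigma_k$: since $\Sigma_1^2G^\top$ is the dominant block of $A$ and $\Sigma_1^2$ the dominant block of $S$, dropping the lower-order terms would turn $A^\top S^{-1}A$ into exactly $G\Sigma_1^2G^\top$. To make this quantitative I would use the Loewner lower bound $A^\top S^{-1}A\succeq X^\top A+A^\top X-X^\top SX$, valid for every test matrix $X$: the choice $X=G^\top$ already removes the dominant $\Sigma_1$-contribution, but an $S$-dependent correction of $X$ is needed to land precisely on the asserted bound. After substituting such an $X$ and dividing by $I+GG^\top$, the $\Sigma_1$-terms telescope and the residual $\lmax$ is controlled by $\sigma_{k+1}^2=\|\Sigma_2\|^2$ and $\|(I+\Sigma_2^2)^{1/2}G\|^2$; the $+1$ of slack gained in the first paragraph absorbs the crude part of this estimate, yielding $\mu\le 2+\sigma_{k+1}^2+\|(I+\Sigma_2^2)^{1/2}G\|^2$. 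The degenerate cases in which $\Sigma$ is singular (some $\sigma_i=0$) or $n=k$ (so $I-\mathbf{P}_W=0$) are handled by continuity, replacing $\sigma_i$ by $\sigma_i+\varepsilon$ and letting $\varepsilon\downarrow 0$.
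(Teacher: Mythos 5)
Your first reduction coincides with the paper's: forming $Z=\Omega\Omega_1^\dagger$ (your $W$), using $\mathbf{P}_Z\preceq\mathbf{P}_\Omega$, and applying the Woodbury formula; your Rayleigh--quotient reformulation over the orthogonal complement of the range of $W$, including the identity $N^\top T^{-1}N=G\Sigma_1^2G^\top+\Sigma_2^2-A^\top S^{-1}A$ with $S=I+\Sigma_1^2+G^\top(I+\Sigma_2^2)G$, is also correct. But the argument stops exactly where the content of the theorem begins. The required estimate $\max_{x\ne 0}\,x^\top N^\top T^{-1}Nx/\big(x^\top(I+GG^\top)x\big)\le 1+\sigma_{k+1}^2+\|(I+\Sigma_2^2)^{1/2}G\|^2$ is never established: you note yourself that the natural test matrix $X=G^\top$ in $A^\top S^{-1}A\succeq X^\top A+A^\top X-X^\top SX$ does not reach the asserted constant (working it out, it yields only something of the order $\sigma_{k+1}^2+\|\Sigma_2G\|^2+\|G\|^2$, which can exceed the target by roughly an extra $\|(I+\Sigma_2^2)^{1/2}G\|^2$), and you defer to an unspecified ``$S$-dependent correction of $X$'' without exhibiting it or verifying the claimed telescoping. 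Moreover, the ``$+1$ of slack gained in the first paragraph'' is not available: $\lmax\big(T^{-1/2}(I+\Sigma^2)T^{-1/2}\big)=1+\|C\|^2$ holds with equality, so your remaining claim $\|C\|^2\le 1+\sigma_{k+1}^2+\|(I+\Sigma_2^2)^{1/2}G\|^2$ is exactly equivalent to the theorem for the system built from $\mathbf{P}_Z$; nothing has been absorbed, and the entire difficulty is concentrated in the step you leave open. As written, this is a genuine gap, not a routine verification.

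For comparison, the paper closes this step by a different mechanism: inside the Woodbury inverse it replaces $F^\top(I+\Sigma_2^2)F$ (your $G^\top(I+\Sigma_2^2)G$) by $\bar\lambda I$ with $\bar\lambda=\|(I+\Sigma_2^2)^{1/2}F\|^2$, which by Loewner monotonicity of the inverse turns the middle factor into the diagonal matrix $\Lambda=I+\Sigma_1^2+\bar\lambda I$ commuting with $\Sigma_1$; the resulting two-by-two block matrix is then bounded by the sum of the spectral norms of its diagonal blocks via \cite[Prop.~8.3]{HalkoMartinssonTropp2011}, and these evaluate to $1+\min\{\sigma_1^2,\bar\lambda\}$ and $1+\sigma_{k+1}^2$, giving the stated bound. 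If you wish to rescue your route, the analogue of this move in your notation is $S\preceq\Lambda$, hence $A^\top S^{-1}A\succeq A^\top\Lambda^{-1}A$ (i.e.\ the choice $X=\Lambda^{-1}A$ in your completion-of-squares inequality), but the subsequent bookkeeping that produces the cancellation of the large $\sigma_1,\dots,\sigma_k$ still has to be carried out explicitly; at present it is asserted rather than proved.
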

\begin{proof}
By assumption $\Omega_1$ has full row rank and the range space of the matrix
\begin{displaymath}
  Z=\Omega\cdot\Omega_1^\dagger= \mat{c}{I_k\\ F=\Omega_2\Omega_1^\dagger} \in\R^{ n\times k}
\end{displaymath}
is contained in the range space of $\Omega$. Hence $\upd{\mathbf{P}_Z}\preceq \upd{\mathbf{P}_{\Omega}}$ and
\begin{equation}\label{eq:lmaxle}
\lambda_{\max}( (I+\Sigma^2)^{\frac12}(I+\Sigma \upd{\mathbf{P}_{\Omega}}\Sigma)^{-1}(I+\Sigma^2)^{\frac12})\le 
\lambda_{\max}( (I+\Sigma^2)^{\frac12}(I+\Sigma \upd{\mathbf{P}_Z}\Sigma)^{-1}(I+\Sigma^2)^{\frac12}).
\end{equation}
The projector $\upd{\mathbf{P}_Z}$ computes to
\begin{displaymath}
  \upd{\mathbf{P}_Z}=\mat{c}{I\\ F}\mat{c}{I+F^\top F}^{-1}\mat{c}{I\\F}^\top.
\end{displaymath}
Use this in the Woodbury-formula for inverses of rank adjustments \cite[\S0.7.4]{HornJohnson85} for $(I+\Sigma \upd{\mathbf{P}_Z}\Sigma)^{-1}$ to obtain
\begin{eqnarray}
(I+\Sigma  \upd{\mathbf{P}_Z}\Sigma)^{-1}&=&I-
                                           \mat{c}{\Sigma_1\\\Sigma_2
  F}\mat{c}{I+F^\top
  F+\Sigma_1^2+F^\top\Sigma_2^2F}^{-1}\mat{c}{\Sigma_1\\ \Sigma_2F
  }^\top\nonumber\\
                                       &=& I-\mat{c}{\Sigma_1\\\Sigma_1F}\mat{c}{I+\Sigma_1^2+F^\top(I+\Sigma_2^2)F}^{-1}\mat{c}{\Sigma_1\\\Sigma_2F}^\top\nonumber\\
  &\preceq& I- \mat{c}{\Sigma_1\\\Sigma_2F}\mat{c}{I+\Sigma_1^2+\|(I+\Sigma_2^2)^{\frac12}F\|^2I}^{-1}\mat{c}{\Sigma_1\\\Sigma_2F}^\top\hspace*{-1.5ex}.\label{eq:invpreceq}
\end{eqnarray}
The last line follows, because
$\|(I+\Sigma_2^2)^{\frac12}F\|^2=\lambda_{\max}(F^\top(I+\Sigma_2^2)F)=:\bar\lambda$
and therefore $F^\top(I+\Sigma_2^2)F\preceq \bar\lambda I$ giving
\begin{displaymath}
  I+\Sigma_1^2+F^\top(I+\Sigma_2^2)F\preceq
  I+\Sigma_1^2+\bar\lambda I \quad \Leftrightarrow\quad
  (I+\Sigma_1^2+F^\top(I+\Sigma_2^2)F)^{-1}\succeq
  (I+\Sigma_1^2+\bar\lambda I)^{-1},
\end{displaymath}
so the relation is implied by semidefinite scaling. Put
$\Lambda=(I+\Sigma_1^2+\bar\lambda I)$ and note that the second
diagonal block of \eqref{eq:invpreceq} asserts
$\Sigma_2F\Lambda^{-1}F^\top\Sigma_2\preceq I$, then
\begin{eqnarray*}
  \lefteqn{S:=(I+\Sigma^2)^{\frac12}(I+\Sigma
  \upd{\mathbf{P}_{\Omega}}\Sigma)^{-1}(I+\Sigma^2)^{\frac12}\overset{\eqref{eq:lmaxle}\eqref{eq:invpreceq}}{\preceq}}\\
&\preceq& (I+\Sigma^2)^{\frac12}\mat{cc}{I-\Sigma_1\Lambda^{-1}\Sigma_1 & -\Sigma_1\Lambda^{-1}F^\top\Sigma_2\\-\Sigma_2F\Lambda^{-1}\Sigma_1 &I-\Sigma_2F\Lambda^{-1}F^\top\Sigma_2} (I+\Sigma^2)^{\frac12}\\
  &\preceq& (I+\Sigma^2)^{\frac12}\mat{cc}{(\Lambda-\Sigma_1^2)\Lambda^{-1} & -\Sigma_1\Lambda^{-1}F^\top\Sigma_2\\-\Sigma_2F\Lambda^{-1}\Sigma_1 &I} (I+\Sigma^2)^{\frac12}\quad :=\bar S.
\end{eqnarray*}
Employing \cite[Prop.\ 8.3]{HalkoMartinssonTropp2011} now results in
  \begin{displaymath}
  \lambda_{\max}(S)\le\lambda_{\max}(\bar S)\le\|(I+\Sigma_1^2)(\Lambda-\Sigma_1^2)\Lambda^{-1}\|+\|I+\Sigma_2^2\|.
\end{displaymath}
The last term evaluates to $\lambda_{\max}(I+\Sigma_2^2)=1+\sigma_{k+1}^2$. For the second last term substituting in the definitions of $\Lambda$ and $\bar\lambda$ yields 
\begin{eqnarray*}
  \lefteqn{\|(I+\Sigma_1^2)(\Lambda-\Sigma_1^2)\Lambda^{-1}\|=}\\
  &=&(1+\|(I+\Sigma_2^2)^{\frac12}\Omega_2 \Omega_1^\dagger\|^2)\cdot \max_{i\in\{1,\dots,k\}} \frac{1+\sigma_i^2}{1+\sigma_i^2+\|(I+\Sigma_2^2)^{\frac12}\Omega_2 \Omega_1^\dagger\|^2}\\
  &=& \max_{i\in\{1,\dots,k\}} \frac{1+\sigma_i^2+\|(I+\Sigma_2^2)^{\frac12}\Omega_2 \Omega_1^\dagger\|^2+\sigma_i^2\|(I+\Sigma_2^2)^{\frac12}\Omega_2 \Omega_1^\dagger\|^2}{1+\sigma_i^2+\|(I+\Sigma_2^2)^{\frac12}\Omega_2 \Omega_1^\dagger\|^2}\\
  &\le& 1+\min\{\sigma_1^2,\|(I+\Sigma_2^2)^{\frac12}\Omega_2 \Omega_1^\dagger\|^2\}.
\end{eqnarray*}
\end{proof}
The current bound falls somewhat short of expectation because of
the identity in $\|(I+\Sigma_2^2)^{\frac12}\Omega_2\Omega_1^\dagger\|^2$.
By Theorem~\ref{T:kappa} and $I_n\preceq I_n+\Sigma \upd{\mathbf{P}_{\Omega}} \Sigma\preceq I_n+\Sigma^2$,
the use of projectors will never result in condition numbers larger than $1+\sigma_1^2$, so the
influence of the dimension seems to be too dominant in this.
Maybe a better bound is achievable by a more
sophisticated argument. 

The deterministic bound allows to also make use of the
probabilistic bounds on $\|(I+\Sigma_2^2)^{\frac12}\Omega_2
\Omega_1^\dagger\|$ \upd{for standard Gaussian $n\times(k+\frakp)$ matrices
  $\Omega$ (\ie matrix elements are independently $\calN(0,1)$ distributed)} given in \cite{HalkoMartinssonTropp2011}. These shed some light on the advantage of employing oversampling by $\upd{\frakp}$ additional random vectors in $\Omega$. In our application, oversampling
corresponds to computing the singular values of
$D^{-\frac12}V\mathbf{P}_\Omega$ for $k+\upd{\frakp}$ columns in order to get better
control on the $k$ largest singular values of $D^{-\frac12}V$ by the
preconditioner ${\pcH{\mathbf{P}_\Omega}}$.
\begin{theorem}\label{th:probabilisticbound}
  In the setting of Theorem~\ref{th:deterministicbound} let
  $\Omega$ be drawn as a standard Gaussian $n\times(k+\frakp)$ matrix. Then
  \begin{eqnarray*}
    \lefteqn{\E\,
    \lambda_{\max}\big((I+\Sigma^2)^{\frac12}(I+\Sigma
    \upd{\mathbf{P}_{\Omega}}\Sigma)^{-1}(I+\Sigma^2)^{\frac12}\big)}\\
    &\le &2+
          \sigma_{k+1}^2+\bigg(\sqrt{\frac{k}{\upd{\frakp}-1}}(1+\sigma_{k+1}^2)
          +\frac{e\sqrt{k+\upd{\frakp}}}{\upd{\frakp}}\big(\sum_{i=k+1}^{
          n}(1+\sigma_i^2)\big)^{\frac12}\bigg)^2.
  \end{eqnarray*}
  Furthermore, if $p\ge 4$ then for all $u,t\ge 1$ the probability for
  \begin{eqnarray*}
    \lefteqn{\lambda_{\max}\big((I+\Sigma^2)^{\frac12}(I+\Sigma
    \upd{\mathbf{P}_{\Omega}}\Sigma)^{-1}(I+\Sigma^2)^{\frac12}\big)}\\
    &> &2+
          \sigma_{k+1}^2+\bigg(t\big[\sqrt{\tfrac{3k}{\upd{\frakp}+1}}+u\tfrac{e\sqrt{k+\upd{\frakp}}}{\upd{\frakp}+1}\big](1+\sigma_{k+1}^2)
          +t\tfrac{e\sqrt{k+\upd{\frakp}}}{\upd{\frakp}+1}\big(\sum_{i=k+1}^{
          n}(1+\sigma_i^2)\big)^{\frac12}\bigg)^2    
      \end{eqnarray*}
      is at most $2t^{-\upd{\frakp}}+e^{-u^2/2}$. \\
      The same bounds hold for the condition number
      $\kappa_{\upd{\mathbf{P}_{\Omega}}}$.
\end{theorem}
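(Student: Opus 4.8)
The plan is to feed the deterministic bound of Theorem~\ref{th:deterministicbound} into the Gaussian matrix estimates of \cite[\S10]{HalkoMartinssonTropp2011}. Writing $\bar\lambda:=\|(I+\Sigma_2^2)^{\frac12}\Omega_2\Omega_1^\dagger\|^2$, Theorem~\ref{th:deterministicbound} has already reduced everything to this single random scalar: on the event that the first $k$ rows of $\Omega$ are linearly independent --- which has probability one for a standard Gaussian $\Omega$ --- it gives $\lambda_{\max}\big((I+\Sigma^2)^{\frac12}(I+\Sigma P_\Omega\Sigma)^{-1}(I+\Sigma^2)^{\frac12}\big)\le 2+\sigma_{k+1}^2+\bar\lambda$, so $\E\,\lambda_{\max}(\cdots)\le 2+\sigma_{k+1}^2+\E\bar\lambda$, and for the tail it suffices to bound $\Pr\{\bar\lambda>\cdots\}$. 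The closing ``same bounds for $\kappa_{P_\Omega}$'' then costs nothing: by Theorem~\ref{T:kappa} together with $I_n\preceq I_n+\Sigma P_\Omega\Sigma\preceq I_n+\Sigma^2$ (so the ``$\min\{1,\cdot\}$'' in its denominator equals $1$) one has $\kappa_{P_\Omega}=\lambda_{\max}\big((I+\Sigma^2)^{\frac12}(I+\Sigma P_\Omega\Sigma)^{-1}(I+\Sigma^2)^{\frac12}\big)$, exactly the quantity being estimated.

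Next I decouple. Partition $\Omega=\begin{sbmatrix}\Omega_1\\\Omega_2\end{sbmatrix}$ into its first $k$ and last $n-k$ rows: these are independent standard Gaussian matrices and $\Omega_1$ has full row rank a.s. Conditioning on $\Omega_1$ fixes $T:=\Omega_1^\dagger$ and leaves $\bar\lambda=\|S\,\Omega_2\,T\|^2$ with the fixed matrix $S:=(I+\Sigma_2^2)^{\frac12}$, for which $\|S\|=(1+\sigma_{k+1}^2)^{\frac12}$ and $\|S\|_F=\big(\sum_{i=k+1}^n(1+\sigma_i^2)\big)^{\frac12}$. To this conditional problem I apply the two standard facts for $\|SGT\|$ with Gaussian $G$ and fixed $S,T$: the first moment bound $\E_G\|SGT\|\le\|S\|\,\|T\|_F+\|S\|_F\,\|T\|$, and the Gaussian concentration $\Pr\{\|SGT\|>\|S\|\|T\|_F+\|S\|_F\|T\|+u\|S\|\|T\|\}\le e^{-u^2/2}$ (the map $G\mapsto\|SGT\|$ being $\|S\|\|T\|$-Lipschitz). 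Taking the outer expectation / probability over $\Omega_1$ then uses $\E\|\Omega_1^\dagger\|_F\le(\E\|\Omega_1^\dagger\|_F^2)^{\frac12}=\sqrt{k/(p-1)}$, $\E\|\Omega_1^\dagger\|\le e\sqrt{k+p}/p$, and the pseudoinverse tails $\Pr\{\|\Omega_1^\dagger\|_F^2>\tfrac{3k}{p+1}t^2\}\le t^{-p}$ and $\Pr\{\|\Omega_1^\dagger\|>\tfrac{e\sqrt{k+p}}{p+1}t\}\le t^{-p}$ for $t\ge1$, $p\ge4$, all from \cite[\S10]{HalkoMartinssonTropp2011}; a union bound over the two ``bad $\Omega_1$'' events ($2t^{-p}$) and the ``bad $\Omega_2\mid\Omega_1$'' event ($e^{-u^2/2}$) produces the stated probability, with $(1+\sigma_{k+1}^2)^{\frac12}\le1+\sigma_{k+1}^2$ used to obtain the factor $1+\sigma_{k+1}^2$ appearing in the statement.

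The main obstacle is the \emph{square} on $\bar\lambda=\|S\Omega_2\Omega_1^\dagger\|^2$: the clean HMT estimate controls $\|SGT\|$, not its square, so the expectation part needs a second moment. I would get it from the conditional variance decomposition $\E\bar\lambda=\E_{\Omega_1}\!\big[(\E_{\Omega_2}\|S\Omega_2\Omega_1^\dagger\|)^2+\Var_{\Omega_2}(\|S\Omega_2\Omega_1^\dagger\|)\big]$ together with the Gaussian Poincar\'e inequality $\Var_{\Omega_2}(\|S\Omega_2\Omega_1^\dagger\|)\le\|S\|^2\|\Omega_1^\dagger\|^2$, then expand $(\|S\|\,\|\Omega_1^\dagger\|_F+\|S\|_F\|\Omega_1^\dagger\|)^2$, take $\E_{\Omega_1}$ (handling the cross term by Cauchy--Schwarz and $\E\|\Omega_1^\dagger\|^2\le\E\|\Omega_1^\dagger\|_F^2$), and verify that the outcome fits inside the single square $\big(\sqrt{\tfrac{k}{p-1}}(1+\sigma_{k+1}^2)+\tfrac{e\sqrt{k+p}}{p}(\sum_{i>k}(1+\sigma_i^2))^{\frac12}\big)^2$ --- which is exactly where the slack $1+\sigma_{k+1}^2\ge1$ and $\sum_{i>k}(1+\sigma_i^2)\ge 1+\sigma_{k+1}^2$ is spent to absorb the extra variance contribution. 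The same squaring, and the choice of how to split the failure probability between $\Omega_1$ and $\Omega_2\mid\Omega_1$, are what dictate the hypotheses $p\ge4$ and the shape $2t^{-p}+e^{-u^2/2}$ of the tail.
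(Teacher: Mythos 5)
Your reduction through Theorem~\ref{th:deterministicbound}, the remark that $\kappa_{P_\Omega}$ equals the quantity being estimated, and your tail argument are in substance the paper's route: the paper simply quotes the two relations established inside the proofs of \cite{HalkoMartinssonTropp2011} (the bound on $\E\,\|(I+\Sigma_2^2)^{\frac12}\Omega_2\Omega_1^\dagger\|$ and the deviation bound for the same norm) and feeds them into Theorem~\ref{th:deterministicbound}, whereas you re-derive the deviation bound by conditioning on $\Omega_1$, Lipschitz concentration in $\Omega_2$, the two pseudoinverse tail bounds and a union bound --- exactly the machinery behind the cited result. For the tail this is unproblematic, since there the deterministic bound is applied pointwise and squaring the threshold is legitimate.

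The gap is in the expectation half. You rightly observe that Theorem~\ref{th:deterministicbound} produces $2+\sigma_{k+1}^2+\|(I+\Sigma_2^2)^{\frac12}\Omega_2\Omega_1^\dagger\|^2$, so a second moment is needed (the paper's proof does not engage with this; it quotes the first-moment relation and squares it via the deterministic bound). But the repair you sketch does not close. Writing $S=(I+\Sigma_2^2)^{\frac12}$, $T=\Omega_1^\dagger$, your plan bounds $\E\big[(\|S\|\|T\|_F+\|S\|_F\|T\|)^2\big]+\|S\|^2\E\|T\|^2$ using $\E\|T\|^2\le\E\|T\|_F^2=\tfrac{k}{p-1}$ and Cauchy--Schwarz for the cross term; this makes the coefficient of $\|S\|_F^2$ equal to $\tfrac{k}{p-1}$, which must sit below the statement's $\tfrac{e^2(k+p)}{p^2}$. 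For $p\ge 7$ and $k\gg p$ it does not, and when $\sigma_{k+1}=0$ the factor $1+\sigma_{k+1}^2=1$ gives no slack at all: with $k=1000$, $p=10$, $\sigma_i=0$ for $i>k$ and $n-k=10^6$, your estimate evaluates to about $1.1\cdot 10^{8}$ while the theorem's right-hand side is about $7.5\cdot 10^{7}$; the cross term has the same defect and the extra Poincar\'e term only adds to it (a sharper $\E\|T\|^2\le \tfrac{e^2(k+p)}{p^2-1}$ obtained by integrating the tail does not rescue it either). So the step ``verify that the outcome fits inside the single square'' is asserted but false as described. To obtain the statement as printed you should do what the paper does --- invoke $\E\|S\Omega_2\Omega_1^\dagger\|\le\sqrt{\tfrac{k}{p-1}}\|S\|+\tfrac{e\sqrt{k+p}}{p}\|S\|_F$ from the cited proof and weaken $\|S\|\le 1+\sigma_{k+1}^2$ (accepting the squaring step you flag as delicate) --- or else produce a genuinely sharper second-moment estimate than the elementary ones you list, which provably cannot be squeezed under the stated bound.
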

\begin{proof}
A central and complex step in \cite[proof of Th.\ 10.2]{HalkoMartinssonTropp2011} is to establish the relation
  \begin{displaymath}
 \E \|(I+\Sigma_2^2)^{\frac12}\Omega_2 \Omega_1^\dagger\|\le\sqrt{\frac{k}{\upd{\frakp}-1}}\|(1+\Sigma_2^2)^{\frac12}\|+\frac{e\sqrt{k+\upd{\frakp}}}{\upd{\frakp}}\|(I+\Sigma_2^2)^{\frac12}\|_F
\end{displaymath}
which directly yields the bound on the expected value via
Theorem~\ref{th:deterministicbound}. 

Likewise, in \cite[proof of Th.\ 10.8]{HalkoMartinssonTropp2011} the
authors derive for $\upd{\frakp}\ge 4$ and $u,t\ge 1$ 
\begin{displaymath}
\begin{array}{r@{\;}l}  
  \Prob\bigg\{\|(I+\Sigma_2^2)^{\frac12}\Omega_2
    \Omega_1^\dagger\|>&\|(I+\Sigma_2^2)^{\frac12}\|\sqrt{\frac{3k}{\upd{\frakp}+1}}\cdot
    t+\|(I+\Sigma_2^2)^{\frac12}\|_F\frac{e\sqrt{k+\upd{\frakp}}}{\upd{\frakp}+1}\cdot
                         t\\
  &+\|(I+\Sigma_2^2)^{\frac12}\|\frac{e\sqrt{k+\upd{\frakp}}}{\upd{\frakp}+1}\cdot
    ut\bigg\}\le 2t^{-\upd{\frakp}}+e^{-u^2/2}.
    \end{array}
\end{displaymath}
\end{proof}
Again, the presence of the identity in the deterministic bound of
Theorem~\ref{th:deterministicbound} has a major
impact also in these probabilistic bounds. Indeed, one would hope
that a better deterministic bound helps to prove stronger decay.

Without some a priori knowledge of the singular values of
$D^{-\frac12}V\in\R^{m\times n}$ it is difficult to determine a
suitable number of columns for $\Omega$, \ie a suitable dimension of
the random subspace. For huge $m$ the Johnson-Lindenstrauss result as
presented in \cite{DasguptaGupta2002} suggests that for $k$ at most
$4(\varepsilon^2/2-\varepsilon^3/3)^{-1}\ln m$ a suitably chosen random $\Omega\in\R^{n\times k}$ results
in a distortion of $1\pm\varepsilon$ with sufficiently high
probability. This roughly translates to that
each matrix element of $D^{-\frac12}VV^\top D^{-\frac12}$ and
$D^{-\frac12}V\Omega\Omega^\top V^\top D^{-\frac12}$ differs by at
most this factor. When considering the sizes of $m$ aimed for here ---
the dimension of the design space will be a few thousands to a few
hundred thousands --- this number is still too large for efficient
computations even for a moderate $\varepsilon=0.1$. Indeed, the burden
of forming the preconditioner and of applying it would exceed the gain
by far. \cite{HalkoMartinssonTropp2011} propose an algorithmic variant
for identifying a significant drop in singular values, but this requires
successive matrix vector multiplications and these are quite costly in
practice. In preliminary experiments with a number of tentative
randomized variants, those relating the number of columns to the number
of matrix-vector multiplications of the previous solve seemed
reasonable. It will turn out, however, that even the cost of this is
too high and the gain too small in comparison to the deterministic
approach of the next section. The latter appears to capture the
important directions quite well and offers better possibilities to
exploit structural properties of the data. Due to the rather clear
superiority of the deterministic approach, the numerical experiments of
Section \ref{S:experiments} will only present results for one
particular randomized variant that performed best among the tentative
versions. It attempts to identify the
most relevant subspace by storing and extending the important
directions generated in the previous round. For completeness and
reproducability, its details are given in Alg.~\ref{alg:randprecond},
but in view of the rather disencouraging results we refrain from further
discussions.

\begin{algorithm}[Randomized subspace selection forming $\hat V=V \upd{\hat{P}}$]\label{alg:randprecond}\quad\\
  \textbf{Input:} $V\in\R^{m\times n}$, $D\in\Sym^n_{++}$,
  previous relevant subspace $P_{old}\in\R^{n\times
    \underline{k}}$ (initially $\underline{k}=0$),
  previous number of multiplications $n_{mult}$, previous $\hat k$ of Alg.~\ref{alg:precond}\\
  \textbf{Output:} $\hat V$ (and stores $P_{old}$)
  \begin{compactenum}
  \item If ($\underline{k}=0$) then
    \begin{compactenum}[(a)]
    \item set $k=\min\{n,3+2\hat k,\lceil\sqrt{n_{mult}\frac{n_{mult}+n}4}-\frac{n_{mult}}2\rceil\}$,
    \item generate a standard Gaussian $\Omega\in\R^{n\times k}$ and
      set $\upd{\hat P}\gets\Omega$,
    \end{compactenum}
    else
    \begin{compactenum}[(a)]
    \item set $k_+=\max\{3,\lfloor\frac{\sqrt{n_{mult}}}2\rfloor-\underline{k}\}$,
    \item generate a standard Gaussian $\Omega\in\R^{n\times k_+}$ and set $\upd{\hat{P}}\gets[P_{old},\Omega]$.
    \end{compactenum}
  \item Orthonormalize $\upd{\hat{P}}$, reset $k$ to its number of columns, set $\hat V=V \upd{\hat{P}}$.
  \item Compute eigenvalue decomposition $\hat V^\top D^{-1}\hat V=P\Diag(\hat
    \lambda_1\ge\dots\ge\hat \lambda_k)P^\top$.
  \item Compute threshold $\bar\lambda=\max\{10,e^{\frac1{10}\ln
      \hat\lambda_1-\frac9{10}\ln\hat\lambda_k}\}$ (enforce $\hat\lambda_k>0$).
  \item Set $\underline{k}\gets\min\big\{k,\max\{3,i>3\colon
    \hat\lambda_i>\bar\lambda\}\big\}$ and set $P_{old}\gets
    \upd{\hat{P}}P_{\bullet,[1,\dots,\underline{k}]}$.
    \item Return $\hat V$. 
  \end{compactenum}
\end{algorithm}

\subsection{A Deterministic Subspace Selection Approach}\label{S:detprecond}

In the conic bundle method, $H=D+VV^\top$ of
Theorem~\ref{T:deterministiccond} is of the form described in
\eqref{eq:tildeV}. An inspection of the column blocks of this $V$
suggests to concretize the bound of Theorem~\ref{T:deterministiccond} for
interior point related applications \upd{to
  Theorem~\ref{T:speccondbound} below. In this, $B^\top
  X^{\frac12}$ may be thought
of as an adapted factorization variant of block
$B^\top\frakX_t^{\frac12}(I-\tfrac{\frakX_t^{\frac12}\one_{t}(\frakX_t^{\frac12}\one_{t})^\top}{\zeta^{-1}\sigma+\one_t^\top\frakX_t\one_t})^{\frac12}$
in \eqref{eq:tildeV} with
$X={\frakX_t^{\frac12}(I-\tfrac{\frakX_t^{\frac12}\one_{t}(\frakX_t^{\frac12}\one_{t})^\top}{\zeta^{-1}\sigma+\one_t^\top\frakX_t\one_t})\frakX_t^{\frac12}}$. 
Alternatively, for the full $V$, consider $X$ as consisting of the
three diagonal blocks $I$,
$D_w$ and
${\frakX_t^{\frac12}(I-\tfrac{\frakX_t^{\frac12}\one_{t}(\frakX_t^{\frac12}\one_{t})^\top}{\zeta^{-1}\sigma+\one_t^\top\frakX_t\one_t})\frakX_t^{\frac12}}$
with suitably adapted $B$ and note 
that in the resulting bound each diagonal block of $X$ is added
separately.}
\begin{theorem}\label{T:speccondbound}
  Given $D\in\Sym^{m}_{++}$ and $B\in\R^{n\times m}$, let $X\in\Sym^n_+$ have eigenvalue decomposition $X=[\upd{\bar P},\upd{\underline{P}}]
  \begin{sbmatrix}
    \upd{\bar\Lambda} & 0\\
    0 & \upd{\underline{\Lambda}}
  \end{sbmatrix}[\upd{\bar P},\upd{\underline{P}}]^\top$ with $[\upd{\bar P},\upd{\underline{P}}]^\top[\upd{\bar P},\upd{\underline{P}}]=I_n$ and diagonal $\upd{\bar\Lambda}\in\Sym^k_+$, $\upd{\underline{\Lambda}}\in\Sym^{n-k}_+$. Put $V=B^\top X^{\frac12}$.
  For $H=D+VV^\top$ and preconditioner ${\pcH{\bar P}}=D+V\upd{\bar P} \upd{\bar P}^\top
  V^\top$ the condition number is bounded by 
  \begin{displaymath}
  \kappa_{\upd{\bar P}}\le 1+\sum_{i=1}^{n-k}(\upd{\underline{\Lambda}})_{ii}\|B^\top (\upd{\underline{P}})_{\bullet,i}\|^2_{D^{-1}}\le 1+(n-k)\bar\rho\bar\beta^2.
\end{displaymath}
where $\bar\rho=\max_{i=1,\dots,n-k}(\upd{\underline{\Lambda}})_{ii}$ and $\bar\beta=\max_{i=1,\dots,n}\|B_{i,\bullet}\|_{D^{-1}}$.

\end{theorem}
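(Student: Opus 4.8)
The plan is to obtain the first inequality directly from Theorem~\ref{T:deterministiccond} after making its abstract right‑hand side concrete through the product structure $V=B^\top X^{\frac12}$, and then to get the second inequality by crude termwise estimates.

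First I would apply Theorem~\ref{T:deterministiccond} with the given orthogonal $P=[P_\ell,P_s]$; since $V=B^\top X^{\frac12}\in\R^{m\times n}$ has the required form, it yields $\kappa_{P_\ell}\le 1+\lmax\big(H_{P_\ell}^{-\frac12}VP_sP_s^\top V^\top H_{P_\ell}^{-\frac12}\big)$. The key simplification is that $VP_s$ lives in the small eigenspace: because $X=[P_\ell,P_s]\diag(\Lambda_\ell,\Lambda_s)[P_\ell,P_s]^\top$ with $[P_\ell,P_s]$ orthogonal, one has $X^{\frac12}=[P_\ell,P_s]\diag(\Lambda_\ell^{\frac12},\Lambda_s^{\frac12})[P_\ell,P_s]^\top$ and hence $X^{\frac12}P_s=P_s\Lambda_s^{\frac12}$, so that $VP_s=B^\top P_s\Lambda_s^{\frac12}$ and
\begin{displaymath}
  VP_sP_s^\top V^\top = B^\top P_s\Lambda_sP_s^\top B = \sum_{i=1}^{n-k}(\Lambda_s)_{ii}\,B^\top(P_s)_{\bullet,i}(P_s)_{\bullet,i}^\top B.
\end{displaymath}

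Next I would conjugate by $H_{P_\ell}^{-\frac12}$ and use subadditivity of $\lmax$ (Weyl's inequality): the conjugated matrix is a sum of $n-k$ positive semidefinite rank‑one terms $(\Lambda_s)_{ii}(H_{P_\ell}^{-\frac12}B^\top(P_s)_{\bullet,i})(H_{P_\ell}^{-\frac12}B^\top(P_s)_{\bullet,i})^\top$, whence its largest eigenvalue is at most $\sum_{i=1}^{n-k}(\Lambda_s)_{ii}\|H_{P_\ell}^{-\frac12}B^\top(P_s)_{\bullet,i}\|^2$. Since $H_{P_\ell}=D+VP_\ell P_\ell^\top V^\top\succeq D\succ0$ we have $H_{P_\ell}^{-1}\preceq D^{-1}$ (this monotonicity is exactly the step already used in the proof of the corollary following Theorem~\ref{T:deterministiccond}), hence $\|H_{P_\ell}^{-\frac12}B^\top(P_s)_{\bullet,i}\|^2=(P_s)_{\bullet,i}^\top BH_{P_\ell}^{-1}B^\top(P_s)_{\bullet,i}\le\|B^\top(P_s)_{\bullet,i}\|^2_{D^{-1}}$. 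Combining these gives $\lmax(H_{P_\ell}^{-\frac12}VP_sP_s^\top V^\top H_{P_\ell}^{-\frac12})\le\sum_{i=1}^{n-k}(\Lambda_s)_{ii}\|B^\top(P_s)_{\bullet,i}\|^2_{D^{-1}}$, which is the first claimed inequality.

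For the second inequality one simply bounds each of the $n-k$ summands by its worst case: $(\Lambda_s)_{ii}\le\bar\rho$ and $\|B^\top(P_s)_{\bullet,i}\|^2_{D^{-1}}\le\bar\beta^2$. The termwise estimate for the unit vectors $(P_s)_{\bullet,i}$ is the step I would scrutinize most: since $\bar\beta^2=\max_i(BD^{-1}B^\top)_{ii}$, it amounts to $\lmax(BD^{-1}B^\top)\le\max_i(BD^{-1}B^\top)_{ii}$, which is natural when the rows of $B$ are (close to) $D^{-1}$‑orthogonal, as is typical for a bundle matrix built from distinct subgradients, whereas a purely generic estimate over the rows of $B$ only yields $\|B^\top(P_s)_{\bullet,i}\|^2_{D^{-1}}\le\sum_j\|B_{j,\bullet}\|^2_{D^{-1}}$; without such structure one would keep the trace $\sum_j\|B_{j,\bullet}\|^2_{D^{-1}}$ in place of $(n-k)\bar\beta^2$. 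Everything else — the reduction to Theorem~\ref{T:deterministiccond}, the identity $X^{\frac12}P_s=P_s\Lambda_s^{\frac12}$, the Weyl bound and the $H_{P_\ell}^{-1}\preceq D^{-1}$ step — is routine.
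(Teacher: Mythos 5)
Your argument for the first inequality is essentially the paper's own: it likewise invokes Theorem~\ref{T:deterministiccond}, rewrites $VP_sP_s^\top V^\top=B^\top P_s\Lambda_sP_s^\top B$, weakens $H_{P_\ell}$ to $D$ by the same monotonicity, and then bounds $\lmax$ by the trace --- your Weyl splitting into the rank-one terms $(\Lambda_s)_{ii}\,H_{P_\ell}^{-\frac12}B^\top(P_s)_{\bullet,i}(P_s)_{\bullet,i}^\top BH_{P_\ell}^{-\frac12}$ is exactly that trace bound in disguise, so the two proofs coincide up to presentation. Your reservation about the final step is also well placed: the paper's proof simply appends ``$\le 1+(n-k)\bar\rho\bar\beta^2$'' without justification, and as you observe the termwise estimate $\|B^\top(P_s)_{\bullet,i}\|^2_{D^{-1}}\le\bar\beta^2$ requires the rows of $B$ to be (nearly) $D^{-1}$-orthogonal; generically one only gets $\bar\rho\,\tr(BD^{-1}B^\top)\le n\bar\rho\bar\beta^2$ (for instance, identical rows of $B$ with the corresponding eigendirection of $X$ placed in $\Lambda_s$ give $\kappa_{P_\ell}=1+n\bar\rho\bar\beta^2$, exceeding $1+(n-k)\bar\rho\bar\beta^2$ once $k\ge1$), so keeping the trace as you suggest is the safe form of the second bound.
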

\begin{proof}
  We show $\lmax({\pcH{\bar P}}^{-\frac12}V\upd{\underline{P}}\,\upd{\underline{P}}^\top V^\top {\pcH{\bar P}}^{-\frac12})\le \sum_{i=1}^{n-k}(\upd{\underline{\Lambda}})_{ii}\|B^\top (\upd{\underline{P}})_{\bullet,i}\|^2_{D^{-1}}$, then the statement follows by Theorem~\ref{T:deterministiccond}. Note that
$V\upd{\underline{P}}\,\upd{\underline{P}}^\top V^\top=B^\top \upd{\underline{P}}\,\upd{\underline{\Lambda}}\,\upd{\underline{P}}^\top B$.
  Furthermore, if $\lambda\ge 0$ satisfies $B^\top B\preceq \lambda D$, it also satisfies
  $B^\top B\preceq \lambda (D+V\upd{\bar P} \upd{\bar P}^\top V^\top)$, therefore
  \begin{eqnarray*}
    \lmax({\pcH{\bar P}}^{-\frac12}V\upd{\underline{P}}\,\upd{\underline{P}}^\top V^\top {\pcH{\bar P}}^{-\frac12})
    &\le& \lmax(D^{-\frac12}B^\top \upd{\underline{P}}\,\upd{\underline{\Lambda}}\,\upd{\underline{P}}^\top BD^{-\frac12})\\
    &\le& \tr(D^{-\frac12}B^\top \upd{\underline{P}}\,\upd{\underline{\Lambda}}\,\upd{\underline{P}}^\top  BD^{-\frac12})\\
    &=&\tr (\upd{\underline{\Lambda}}\,\upd{\underline{P}}^\top B D^{-1}B^\top \upd{\underline{P}})\\
    &=& \sum_{i=1}^{n-k}(\upd{\underline{\Lambda}})_{ii}\|B^\top (\upd{\underline{P}})_{\bullet,i}\|^2_{D^{-1}}\le 1+(n-k)\bar\rho\bar\beta^2.
  \end{eqnarray*}
\end{proof}
Note, the proof weakens ${\pcH{\bar P}}$ to $D$, so the bound cannot be expected to be strong. Yet, it provides a good rule of thumb on which columns of $D^{-\frac12}BX^{\frac12}$ should be included, namely those with large value
$\Lambda_{ii}\|B^\top (\upd{\underline{P}})_{\bullet,i}\|^2_{D^{-1}}$. 

In interior point methods the spectral decomposition and the size of
the eigenvalues of $X$ of Theorem~\ref{T:speccondbound} strongly
depend on the current iteration, in particular on the value of the
barrier parameter $\mu$. Therefore it is worth to set up a new
preconditioner for each new KKT system. In order to do so in a
computationally efficient way, the following dynamic selection
heuristic for $\upd{\bar P}$ with respect to $V$ of \eqref{eq:tildeV}
tries to either pick columns of $V$ directly by including unit vectors
in $\upd{\bar P}$ or to at least take linear combinations of few
columns of $V$ in order to reduce the cost of matrix-vector
multiplications and to preserve potential structural proprieties. So
instead of forming $\upd{\bar P}$, the heuristic builds
$\hat V=V\upd{\bar P}$ directly by appending (linear combinations of
selected) columns of $V$ to $\hat V$. Also, it will often only employ
approximat\upd{ions of the} eigenvalues $\lambda_i$ \upd{together with
  approximations $p_i$ of} the eigenvectors of the $X$ described in
Theorem~\ref{T:speccondbound}. Generally, it will include those in
$\hat V$ for which an estimate of $\lambda_i\|B^\top p_i\|_{D^{-1}}^2$
exceeds a given bound $\underline\rho$. In order to reduce the number
of matrix vector multiplications, $\|B^\top p_i\|^2_{D^{-1}}$ will
only be computed for those $p_i$ with
$\big(\sum_{j=1}^n(p_i)_j^2\|(B^\top)_{\bullet,j}\|_{D^{-1}}\big)^2\ge
\underline\rho$ where the column norms of $B^\top$ are precomputed for
each KKT systems.  The implementation uses $\underline{\rho}=10$. Next
the selections are explained by going through $V$ of \eqref{eq:tildeV}
step by step for each of its three column groups $V_{\upd{\frakH}}$,
$A^\top D_w^{\frac12}$ and
$B^\top\frakX_t^{\frac12}(I-\tfrac{\frakX_t^{\frac12}\one_{t}(\frakX_t^{\frac12}\one_{t})^\top}{\zeta^{-1}\sigma+\one_t^\top\frakX_t\one_t})^{\frac12}$. Concerning
the third group, it will become clear in the discussion of the
semidefinite part that in practice it is advantageous to replace the
square root $\frakX_t^{\frac12}$ in the factorization of $\frakX_t$ by
a more general, possibly nonsymmetric factorization
$\frakX_t=\frakF_t\frakF_t^\top$. The matrix $\frakF_t$ will have the
same block structure and leads to a similar rank one correction by the
transformed trace vector $\frakF_t^\top\one_t$,
\begin{displaymath}
  \frakX_t-\tfrac{\frakX_t\one_{t}(\frakX_t\one_{t})^\top}{\zeta^{-1}\sigma+\one_t^\top\frakX_t\one_t}= \frakF_t(I-\tfrac{\frakF_t^{\top}\one_{t}(\frakF_t^\top\one_{t})^\top}{\zeta^{-1}\sigma+\one_t^\top\frakF_t\frakF_t^\top\one_t})^{\frac12}(I-\tfrac{\frakF_t^{\top}\one_{t}(\frakF_t^\top\one_{t})^\top}{\zeta^{-1}\sigma+\one_t^\top\frakF_t\frakF_t^\top\one_t})^{\frac12}\frakF_t^\top.
\end{displaymath}

\begin{algorithm}[Deterministic column selection heuristic forming $\hat V$]\label{alg:deterministicprecond}\quad\\
  \textbf{Input:} $D_\upd{\mathfrak{H}}$, $V_\upd{\mathfrak{H}}$, $D_y$, $A$, $D_w$, $B$, $\frakX_t$, $\zeta$, $\sigma$ specifying $D$ and $V\in\R^{m\times n}$ of \eqref{eq:tildeV}\\
  \textbf{Output:} $\hat V\in\R^{m\times n'}$ for some $n'\le n$ with $\hat V= V\upd{\bar P}$, $\upd{\bar P}^\top \upd{\bar P}=I_{n'}$.
  \begin{compactenum}
  \item Initialize $\hat V\gets 0\in\R^{m \times 0}$, $\underline{\rho}:=\upd{10}$.
   \item Find $\calJ_{V_{\upd{\mathfrak{H}}}}=\{j\colon
       \|(V_{\upd{\mathfrak{H}}})_{\bullet,j}\|_{D^{-1}}^2\ge\underline{\rho}\}$
      and set $\hat V\gets [\hat V,(V_{\upd{\mathfrak{H}}})_{\bullet,\calJ_{V_{\upd{\mathfrak{H}}}}}]$.
    \item Find $\calJ_A=\{j\colon (D_w)_{jj}\|(A^\top)_{\bullet,j}\|_{D^{-1}}^2\ge\underline{\rho}\}$ and set $\hat V\gets [\hat V,(A^\top D_w)_{\bullet,\calJ_A}]$.
    \item Compute $\frakF_t^{\top}\one_t$,
      $\eta=\zeta^{-1}\sigma+\one_t^\top\frakF_t\frakF_t^\top\one_t$, $B^\top\frakF_t^{\top}\one_t$ and for each
      conic diagonal block of $\frakX_t$ call
      append\_``cone''\_columns$(\hat V)$ with corresponding parameters.
    \item Return $\hat V$. 
  \end{compactenum}
\end{algorithm}

The first group of columns $V_{\upd{\mathfrak{H}}}\in\R^{m\times h_{\upd{\mathfrak{H}}}}$ matches, in the notation of Theorem~\ref{T:speccondbound}, (a subblock of) $B^\top=V_{\upd{\mathfrak{H}}}$ and (a diagonal block) $X=I_{h_{\upd{\mathfrak{H}}}}$. The heuristic appends those columns $j$ to $\hat V$ that satisfy $\|(V_{\upd{\mathfrak{H}}})_{\bullet,j}\|^2_{D^{-1}}\ge \underline\rho$.

For the second group of columns $A^\top D_w^{\frac12}$, Theorem~\ref{T:speccondbound} applies
to $B^\top=A^\top$ and $X=D_w=\Diag(d_1,\dots,d_{h_A})$. Thus, column $j$ is appended to $\hat V$ if $d_j\|(A^\top)_{\bullet,j}\|_{D^{-1}}^2\ge \underline\rho$. 

With the comment above regarding $\frakF_t$, the third column group is
formed by a term $B^\top\frakF_t(I-\tfrac{\frakF_t^{\top}\one_{t}(\frakF_t^{\top}\one_{t})^\top}{\zeta^{-1}\sigma+\one_t^\top\frakX_t\one_t})^{\frac12}$ for each cutting model (we assume just one here). With respect to Theorem~\ref{T:speccondbound}, $B$ is just right
and $X$ is the  positive \mbox{(semi-)}definite matrix $\frakX_t-\tfrac{\frakX_t\one_{t}(\frakX_t\one_{t})^\top}{\zeta^{-1}\sigma+\one_t^\top\frakX_t\one_t}=\frakF_t(I-\tfrac{\frakF_t\one_{t}(\frakF_t\one_{t})^\top}{\zeta^{-1}\sigma+\one_t^\top\frakF_t\frakF_t^\top\one_t})\frakF^\top$. Recall  that $\frakX_t$ is a block diagonal matrix with the structure of the diagonal blocks
governed by the linearization of the perturbed complementarity conditions of the
various cones.
The overarching rank one modification by
$\frakF_t\one_{t}$ couples the blocks within the same cutting model
and reappears in some form in each block together with $\eta=\zeta^{-1}\sigma+\one_t^\top\frakX_t\one_t$. Observe that with $\|\frakF_t^{\top}\one_{t}\|^2=\one_t^\top\frakX_t\one_t$
\begin{displaymath}
  (I-\tfrac{\frakF_t^{\top}\one_{t}(\frakF_t^{\top}\one_{t})^\top}{\eta})^{\frac12}=I-\big(1-\tfrac{\sqrt{\zeta^{-1}\sigma}}{\sqrt{\eta}}\big)\tfrac{\frakF_t^{\top}\one_{t}}{\|\frakF_t^{\top}\one_{t}\|}\tfrac{(\frakF_t^{\top}\one_{t})^\top}{\|\frakF_t^{\top}\one_{t}\|}.
\end{displaymath}
For each column $p$ of $\upd{\bar P}$ computing $B^\top\frakF_t(I-\tfrac{\frakF_t^{\top}\one_{t}(\frakF_t^{\top}\one_{t})^\top}{\eta})^{\frac12}p$ splits into
\begin{displaymath}
 B^\top\frakF_tp- \ip{\frakF_t^{\top}\one_{t}}{p}\tfrac{1}{\one_t^\top\frakX_t\one_{t}} \big(1-\tfrac{\sqrt{\zeta^{-1}\sigma}}{\sqrt{\eta}}\big)B^\top\frakX_t\one_{t}.
\end{displaymath}
Thus, by keeping the support of $p$ restricted to single blocks, the proper
column computations can be kept restricted to the respective
block. This also holds for the coefficient $\ip{\frakF_t^{\top}\one_{t}}{p}$. The overarching vector $B^\top\frakX_t\one_{t}$
needs to be evaluated only once and can be added to the columns afterwards. The latter step  only requires the respective coefficients but not the vectors of $\upd{\bar P}$.
This allows to speed up the process of forming $\hat V$
considerably. Therefore, when forming the conceptional $\upd{\bar P}$ in the
heuristic, the influence of $\frakF_t^{\top}\one_{t}$ on eigenvalues
and eigenvectors of the blocks will mostly be considered as restricted
to each single block. Next the actual selection procedure is described
for $\frakX_t$ blocks corresponding to cones $\R^h_+$
(Alg.~\ref{alg:nncselect}) and $\Sym^h_+$ (Alg.~\ref{alg:pscselect})
with Nesterov-Todd-scaling \cite{NesterovTodd98,ToddTohTuetuencue98}.

\begin{algorithm}[append\_$\R^h_+$\_columns$(\hat V)$]\label{alg:nncselect}\quad\\
  \textbf{Input:} column indices $J\in\N^h$ and $x\circ z^ {-1}$
  of this block in $\frakX_t$, $B^\top_{\bullet,J}$,  $\one_t^\top\frakX_t\one_t$, $B^\top\frakX_t\one_t$,
  $\eta=\zeta^{-1}\sigma+\one_t^\top\frakX_t\one_t$, $D$, threshold $\underline{\rho}$. \\
  \textbf{Output:} updated $\hat V$.
  \begin{compactenum}
    \item For each $i=1,\dots,h$ with $(\frac{x_i}{z_i}-\frac1\eta
      \frac{x_i^2}{z_i^2})\|(B^\top)_{\bullet,J(i)}\|_{D^{-1}}^2\ge\underline{\rho}$ 
      set
      $$\alpha\gets\sqrt{\tfrac{x_i}{z_i}}\tfrac{1}{\one_t^\top\frakX_t\one_t} \big(1-\tfrac{\sqrt{\zeta^{-1}\sigma}}{\sqrt{\eta}}\big),$$
      $$\hat b_i=\sqrt{\tfrac{x_i}{z_i}}(B^\top)_{\bullet,J(i)}- \alpha B^\top\frakX_t\one_t,$$
      and if $\|\hat b_i\|_{D^{-1}}^2>\underline\rho$ set 
      $\hat V\gets [\hat
      V,\hat b_i].$
    \end{compactenum}
\end{algorithm}
For Alg.~\ref{alg:nncselect} consider, within the cone specified by
$t$, a block with indices $J\in\N^h$ representing a nonnegative cone
$\R^h_+$ with primal dual pair $(x,z)$. The corresponding ``diagonal
block'' in $\frakX_t$ is of the form $\Diag(x\circ z^{-1})$ and for $\frakF_t$ it is $\Diag(x\circ z^{-1})^{\frac12}$. The
relevant part of the trace vector $\frakX_t\one_t$ reads
$\Diag(x\circ z^{-1})\one=x\circ z^{-1}$. Considering the influence of
the trace vector as restricted to this block alone gives
$\Diag(x\circ z^{-1})-\frac{1}{\eta}(x\circ z^{-1})(x\circ
z^{-1})^\top$ with the correct overarching
$\eta=\zeta^{-1}\sigma+\one_t^\top\frakX_t\one_t$. The eigenvectors to
large eigenvalues of this matrix have their most important coordinates
associated with the largest diagonal entries. The heuristic appends
the columns
\mbox{$B^\top\frakX_t^{\frac12}(I-\tfrac{\frakX_t^{\frac12}\one_{t}(\frakX_t^{\frac12}\one_{t})^\top}{\eta})^{\frac12}e_{J(i)}$}
to $\hat V$ for those $e_{J(i)}$ with
$(\frac{x_i}{z_i}-\frac1\eta
\frac{x_i^2}{z_i^2})\|(B^\top)_{\bullet,J(i)}\|_{D^{-1}}^2\ge\underline{\rho}$.

Note that in interior point methods $x_iz_i\approx\mu$ for barrier
parameter \mbox{$\mu\searrow 0$} and $x_i\to x_i^{opt}$, $z_i\to z_i^{opt}$.
Due to $\eta\ge\frac{x_i}{z_i}$ with $\eta$ mostly much larger,
the estimated value roughly behaves like
$\frac{x_i^2}\mu\|(B^\top)_{\bullet,J(i)}\|_{D^{-1}}^2$ and, indeed, by
experience it seems that columns are almost exclusively included only
for active $x_i^{opt}>0$ and only as $\mu$ gets small enough. When
computing high precision solutions with small $\mu$, the rank of
the preconditioner can thus be expected to match the number of
active subgradients in the cutting model. Theorem~\ref{T:speccondbound}
suggests that in iterative methods these columns have to be
included in some form in order to obtain reliable convergence
behavior.

Alg.~\ref{alg:pscselect} below deals with a positive semidefinite cone
$\Sym^h_+$ with Nesterov-Todd-scaling. For the current purposes it
suffices to know that the diagonal block of $\frakX_t$ indexed by
appropriate $J\in\N^{h+1\choose 2}$ is of the form
$W\skron W$ for a positive definite $W\in\Sym^h_{++}$; see
\cite{ToddTohTuetuencue98} for its efficient computation and for an
appendix of convenient rules for computing with symmetric Kronecker
products. The next result derives the eigenvectors and eigenvalues
when considering the rank one correction restricted to this block.
\begin{lemma}\label{L:TTTPSC}
  Let $W=P_W\Lambda_W P_W^\top$ with $\Lambda_W=\Diag(\lambda^W_1\ge \dots\ge\lambda^W_h>0)$ and $P_W^\top P_W=I_h$, $P_W=[w_1,\dots,w_h]$. Furthermore let $U=\Lambda_W^2-\frac1\eta(\Lambda_W^2\one)(\Lambda_W^2\one)^\top$ have eigenvalue decomposition
  $U=P_U\Lambda_UP_U^\top$ with $P_U^\top P_U=I_h$.  
  The eigenvalues of $W\skron W-\frac1\eta\big((W\skron W)\svec(I_n)\big)\big((W\skron W)\svec(I_h)\big)^ \top$ are $\lambda^U_i=(\Lambda_U)_{ii}$ with eigenvectors $\sum_{j=1}^h (P_U)_{ji}\svec(w_jw_j^\top)$ for $i=1,\dots,h$ and $\lambda_i^W\lambda^W_j$ with eigenvectors $\frac1{\sqrt{2}}\svec(w_iw_j^\top+w_jw_j^\top)$ for $1\le i< j\le h$. 
\end{lemma}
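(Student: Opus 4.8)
The plan is to exploit the observation that the rank-one correction vector lies entirely inside the ``diagonal'' eigenspace of $W\skron W$, which decouples the eigenvalue problem into two invariant pieces.

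First I would collect the facts about the symmetric Kronecker product that are needed. Since $W$ is symmetric, $(W\skron W)\svec(A)=\svec(WAW^\top)$ for every $A\in\Sym^h$; in particular $(W\skron W)\svec(I_h)=\svec(W^2)=\sum_{i=1}^h(\lambda^W_i)^2\svec(w_iw_i^\top)$. Next I would recall the standard spectral decomposition of $W\skron W$ (see the appendix of \cite{ToddTohTuetuencue98}): the vectors $\svec(w_iw_i^\top)$, $i=1,\dots,h$, together with $\tfrac1{\sqrt2}\svec(w_iw_j^\top+w_jw_i^\top)$, $1\le i<j\le h$, form an orthonormal basis of $\Sym^h$ (orthonormality being immediate from $w_i^\top w_j=\delta_{ij}$), and they are eigenvectors of $W\skron W$ with eigenvalues $(\lambda^W_i)^2$ and $\lambda^W_i\lambda^W_j$ respectively. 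Write $S=\myspan\{\svec(w_iw_i^\top):i=1,\dots,h\}$ for the span of the ``diagonal'' vectors and $S^\perp$ for its orthogonal complement, the span of the remaining $\binom h2$ basis vectors.

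Then I would argue that $M:=W\skron W-\tfrac1\eta\bigl((W\skron W)\svec(I_h)\bigr)\bigl((W\skron W)\svec(I_h)\bigr)^\top$ leaves both $S$ and $S^\perp$ invariant: the correction vector $\svec(W^2)$ lies in $S$, so it annihilates $S^\perp$, and hence $M$ restricted to $S^\perp$ coincides with $W\skron W$ there. This already yields the second family of eigenpairs, namely $\lambda^W_i\lambda^W_j$ with eigenvector $\tfrac1{\sqrt2}\svec(w_iw_j^\top+w_jw_i^\top)$ for $i<j$. On $S$, expressing everything in the orthonormal basis $\{\svec(w_iw_i^\top)\}_{i=1}^h$, the operator $W\skron W$ is represented by $\Lambda_W^2$ and the correction vector $\svec(W^2)$ by the coordinate vector $\Lambda_W^2\one$; hence $M|_S$ is represented precisely by $U=\Lambda_W^2-\tfrac1\eta(\Lambda_W^2\one)(\Lambda_W^2\one)^\top$. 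Transferring the eigendecomposition $U=P_U\Lambda_UP_U^\top$ back through this isometry gives eigenvalues $\lambda^U_i=(\Lambda_U)_{ii}$ with eigenvectors $\sum_{j=1}^h(P_U)_{ji}\svec(w_jw_j^\top)$, $i=1,\dots,h$. Counting $h+\binom h2=\binom{h+1}2=\dim\Sym^h$ eigenpairs confirms that these are all of them.

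The argument is essentially bookkeeping once the key observation --- that $(W\skron W)\svec(I_h)=\svec(W^2)$ sits inside the diagonal eigenspace $S$ --- is in place, so there is no serious obstacle; the only point requiring a little care is the precise statement and orthonormality of the $W\skron W$ eigenbasis and the verification that conjugating $M|_S$ by the coordinate isometry produces exactly the matrix $U$ appearing in the hypothesis.
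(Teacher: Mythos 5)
Your proposal is correct and follows essentially the same route as the paper: both rest on the Alizadeh--Haeberly--Overton/Todd--Toh--T\"ut\"unc\"u eigendecomposition of $W\skron W$, the identity $(W\skron W)\svec(I_h)=\svec(W^2)=\sum_i(\lambda^W_i)^2\svec(w_iw_i^\top)$, and the observation that the rank-one correction therefore acts only on the span of the diagonal eigenvectors, where it is represented by $U$, while leaving the mixed eigenpairs untouched. The paper phrases this as the block-diagonal congruence via $P_{\skron}$ rather than as an explicit invariant-subspace argument, but the content is identical.
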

\begin{proof}
  By \cite{AlizadehHaeberlyOverton98} the eigenvalues of $(W\skron W)$ are $\lambda^W_i\lambda^W_j$
  for $1\le i\le j\le h$ with orthonormal eigenvectors
  \begin{equation}\label{eq:wij}
    \begin{array}{r@{\;\,}l@{\quad\text{for }}l}
      w_{ii}:=&\svec(w_iw_i^\top)&1\le i=j\le h,\\
      w_{ij}:=&\frac1{\sqrt2}\svec(w_iw_j^\top+w_jw_i^\top)& 1\le i<j\le h.
    \end{array}
  \end{equation}
  To see this \eg for $i<j$ observe $w_{ij}^\top w_{ij}=\frac12[2\ip{w_iw_j^\top}{w_iw_j^\top}+2\ip{w_iw_j^\top}{w_jw_i^\top}]$ and $(W\skron W)w_{ij}=\frac1{\sqrt2}\svec(W w_iw_j^\top W+ W w_jw_i^\top W)= \lambda^W_i\lambda^W_j w_{ij}$.

  From $(W\skron W)\svec(I_h)=\svec(W^2)=\sum_{i=1}^h(\lambda^W_i)^2w_{ii}$ one obtains
  \begin{displaymath}
    \begin{array}{r@{\;}l@{\quad\text{for }}l}
      \svec(W^2)^\top w_{ii}=&(\lambda_i^W)^2&i=1,\dots,h,\\
      \svec(W^2)^\top w_{ij}=&0&1\le i<j\le h.
    \end{array}
\end{displaymath}
The eigenvector-sorting $P_{\skron}=[w_{11},w_{22},\dots,w_{hh},w_{12},w_{13},\dots,w_{h-1,h}]$ gives
\begin{displaymath}
  W\skron W-\frac1\eta\svec(W^2)\svec(W^2)^\top=P_{\skron}
  \begin{sbmatrix}
    U & 0 &\cdots & 0 \\
    0 & \lambda^W_1\lambda^W_2 & \ddots &\vdots\\
    \vdots &\ddots &\ddots& 0\\
    0 &     \cdots & 0 & \lambda^W_{h-1}\lambda^W_h
  \end{sbmatrix}P_{\skron}^\top
\end{displaymath}
The result now follows by direct computation. 
\end{proof}
For semidefinite blocks, numerical experience indicates that it is
indeed worth to determine the eigenvalue decomposition of $U$ as in
Lemma \ref{L:TTTPSC}.  Finding the eigenvalues and eigenvectors
roughly requires the same amount of work as forming $W$ and is of no
concern.  With $J\in \N^{h+1\choose 2}$ denoting the column indices of
this block within $B^\top$, columns to corresponding eigenvectors are
computed by
$(\Lambda_U^{\frac12})_{ii}\cdot (B^\top)_{\bullet,J}\sum_{j=1}^h
(P_U)_{ji}w_{jj}$ or
$\sqrt{\lambda^W_i\lambda^W_j}(B^\top)_{\bullet,J}w_{ij}$. This
involves linear combinations of ${h+1\choose 2}$ columns and is
computationally expensive if the order $h$ of $W$ gets large. Indeed, when testing
all columns by their correct norms $\|(B^\top)_{\bullet,J}w_{ij}\|_{D^{-1}}^2$, too much time is spent in forming the preconditioner. Therefore  the
heuristic Alg.~\ref{alg:pscselect} first selects candidate eigenvectors to use for
$\upd{\bar P}$ via the rough estimate $\sum_{\hati=1}^{h\choose 2}(w_{ij})_\hati^2\|(B^\top)_{\bullet,J(\hati)}\|_{D^{-1}}^2=\|w_{ij}\|_{\Diag(BD^{-1}B^\top)_J}^2$. 
For the selected eigenvectors it then
computes the precise values after the following transformation that is only seemingly involved.

In order to also account for the possibly overarching contribution of $\frakF_t\one_{t}$ it is advantageous to find a representation equivalent to $B^\top X^{\frac12} \upd{\bar P}$ with orthonormal
columns in $\upd{\bar P}$ as in Theorem~\ref{T:speccondbound} for a suitable
factorization of $X$ other than its square root. For this, let $V_W=P_W\Lambda_W^{\frac12}$, then $W\skron W=(V_W\skron V_W)(V_W^\top\skron V_W^\top)$. Because $(V_W^\top\skron V_W^\top)\svec I=\svec(V_W^\top V_W)=\svec \Lambda_W$ and $(V_W\skron V_W)=(P_W\skron P_W)(\Lambda_W^{\frac12}\skron\Lambda_W^{\frac12})$, the notation of Lemma~\ref{L:TTTPSC} and its proof allows to rephrase the semidefinite block of $\frakX_t-\tfrac{\frakX_t\one_{t}(\frakX_t\one_{t})^\top}{\eta}$ as
\begin{eqnarray*}
  \lefteqn{W\skron W-\frac{(W\skron W)\svec(I)\svec(I)^\top(W\skron W)}{\eta}=}\\
  &=&(V_W\skron V_W)(I-\frac{\svec(\Lambda_W)\svec(\Lambda_W)^\top}{\eta})(V_W\skron V_W)^\top\\
  &=&(P_W\skron P_W)(\Lambda_W^{\frac12}\skron\Lambda_W^{\frac12})(I-\frac{\svec(\Lambda_W)\svec(\Lambda_W)^\top}{\eta})(\Lambda_W^{\frac12}\skron\Lambda_W^{\frac12})(P_W\skron P_W)^\top\\
  &=&P_{\skron}\begin{sbmatrix}
    U & 0 &\cdots & 0 \\
    0 & \lambda^W_1\lambda^W_2 & \ddots &\vdots\\
    \vdots &\ddots &\ddots& 0\\
    0 &     \cdots & 0 & \lambda^W_{h-1}\lambda^W_h
  \end{sbmatrix}P_{\skron}^\top\\
  &=&(P_W\skron P_W)FF^\top(P_W\skron P_W)^\top,      
\end{eqnarray*}
where
\begin{displaymath}
F=(\Lambda_W^{\frac12}\skron\Lambda_W^{\frac12})(I-\frac{\svec(\Lambda_W)\svec(\Lambda_W)^\top}{\eta})^{\frac12}.
\end{displaymath}
This suggests to put $V=B^\top P_{\skron} F$ and to derive the columns
corresponding to $\upd{\bar P}$ via the singular value decomposition of $F=Q_F\Sigma_FP_F^\top$. Lemma~\ref{L:TTTPSC} provides the squared singular values $\Sigma_F^2$ and the eigenvectors give the left-singular vectors in $Q_F$. For $e_{ij}:=\frac1{\sqrt2}\svec(e_ie_j^\top+e_je_i^\top)$ there holds $\svec(\lambda_W)^\top e_{ij}=0$, so the right-singular vectors corresponding to $\sqrt{\lambda_i\lambda_j}$ read $(P_F)_{\bullet,ij}=e_{ij}$. The remaining right-singular vectors of $P_F$ may be computed via $P_F=F^\top Q_F\Sigma_F^{-1}$. In this it is sufficient and convenient to consider only the $U$ block, \ie the support restricted to the $ii$-coordinates. Denote the columns of $P_U=[u_1,\dots,u_h]$ in Lemma~\ref{L:TTTPSC}
by $u_j$ for $j=1,\dots,h$, then the corresponding right-singular vectors $u^F_j\in\R^h$  read for $\Lambda_W=\Diag(\lambda^W)$ and $\Lambda_U=\Diag(\lambda^U_1,\dots,\lambda^U_h)$
\begin{eqnarray}
  u^F_j&=&(I-\tfrac{\lambda^W(\lambda^W)^\top}{\eta})^{\frac12}\Lambda_W\cdot u_j\cdot \tfrac1{\sqrt{\lambda^U_j}}\label{eq:ujF}\\
       &=& \tfrac1{\sqrt{\lambda^U_j}}\bigg(\Lambda_W u_j-\frac{\sqrt{\eta}-\sqrt{\eta-\|\lambda^W\|^2}}{\sqrt{\eta}\|\lambda^W\|^2}\ip{\lambda^W}{\Lambda_Wu_j}\lambda^W\bigg).\nonumber
\end{eqnarray}
By expanding the $U$ block to the correct positions, the right-singular vector to singular value $\sqrt{\lambda^U_j}$ is $(P_F)_{\bullet,{jj}}=\svec(\Diag(u_j^F))$  for $j=1,\dots,h$.

With these preparations the selected semidefinite columns are appended to $\hat V$ as follows. First note that the semidefinite block with
coordinates $J$ of the factor $\frakF_t$ is $(V_W \skron V_W)$, which is nonsymmetric in general. The transformed trace vector $\frakF_t^\top\one_t$ reads $(\frakF_t^\top\one_t)_J=(V_W^\top\skron V_W^\top)\svec I=\svec(\Lambda_W)$.
If column $p_{ij}^F$ of $P^F$ with $1\le i\le j\le h$ is selected for $\upd{\bar P}$ by the heuristic, the column to be appended to $\hat V$ reads
\begin{displaymath}
 (B^\top)_{\bullet,J} (V_W \skron V_W)p_{ij}^F- \ip{\svec{\Lambda_W}}{p_{ij}^F}\tfrac{1}{\one_t^\top\frakX_t\one_{t}} \big(1-\tfrac{\sqrt{\zeta^{-1}\sigma}}{\sqrt{\eta}}\big)B^\top\frakX_t\one_{t}.
\end{displaymath}
If the selected indices satisfy $i<j$, the vector $p_{ij}^F$ is just
$e_{ij}=\frac1{\sqrt2}\svec(e_ie_j^\top+e_je_i^\top)$. By
$(V_W \skron V_W)e_{ij}=\frac{\sqrt{\lambda_i^W\lambda_j^W}}{\sqrt2}\svec(w_iw_j^\top+w_jw_i^\top)=\sqrt{\lambda_i^W\lambda_j^W}w_{ij}$ and $\ip{\svec{\Lambda_W}}{e_{ij}}=0$ the column computation simplifies to
\begin{displaymath}
  \sqrt{\lambda_i^W\lambda_j^W}(B^\top)_{\bullet,J}w_{ij}=\sqrt{2\lambda_i^W\lambda_j^W}\big[w_i^\top{\svec}^{-1}([B^\top]_{k,J})w_j\big]_{k=1,\dots,n}.
\end{displaymath}
Typically, several mixed eigenvectors $w_{ij}$ have the same index $i$
corresponding to a large value $\lambda_i^W$, so it
quickly pays off to precompute $w_i^\top\svec^{-1}([B^\top]_{k,J})$
and to use these $h$-vectors for each $w_j$. For ease of presentation
this implementational detail is not described in
Alg.~\ref{alg:pscselect}. Also, this is
not helpful for the non-mixed vectors
$p_{jj}^F=\svec(\Diag(u_j^F))$, because
\begin{displaymath}
  (V_W\skron V_W)p_{jj}^F=(V_W\skron V_W)\sum_{i=1}^h (u_j^F)_i\svec{e_ie_i^\top}=\sum_{i=1}^h (u_j^F)_i\lambda^W_iw_{ii} 
\end{displaymath}
consists of a linear combination over all $w_{ii}$.  Fortunately\upd{,
  throughout our experiments,} only few of the non-mixed vectors are
among those selected for preconditioning. \upd{A possible explanation
  for this might be that with respect to the selected bundle subspace
  the large non-mixed terms reflect the rank of the currently strongly
  active eigenspace while large mixed terms reflect its ongoing
  interaction with the eigenspace of moderately active or inactive
  eigenvalues.} The transformed trace vector coefficient for
$p_{jj}^F$ evaluates to
$\ip{\Lambda_W}{\Diag(u_j^F)}=\ip{\lambda^W}{u_j^F}$.  With this, the
algorithm for appending semidefinite columns reads as follows.
\begin{algorithm}[append\_$\Sym^h_+$\_columns$(\hat V)$]\label{alg:pscselect}\quad\\
  \textbf{Input:}  column indices $J\in\N^{h+1\choose 2}$
  and Nesterov-Todd scaling matrix $W\succ 0$ of this block in
  $\frakX_t$, $B^\top_{\bullet,J}$,  $\one_t\frakX_t\one_t$, $B^\top\frakX_t\one_t$,
  $\eta=\zeta^{-1}\sigma+\one_t^\top\frakX_t\one_t$, $D$, threshold $\underline{\rho}$ \\
  \textbf{Output:} updated $\hat V$.
  \begin{compactenum}
  \item Compute norms $\|(B^\top)_{\bullet,J(i)}\|_{D^{-1}}$, set $\hat\rho=\underline{\rho}/\max_{i=1,\dots,{h+1\choose 2}}\|(B^\top)_{\bullet,J(i)}\|_{D^{-1}}^2$,\\
    compute eigenvalue decomposition $W=P_W\Lambda_W
    P_W^\top$, $\Lambda_W=\Diag(\lambda^W)$ with $\lambda^W_{1}\ge \dots\ge\lambda^W_{h}$,
    let $w_{ij}$ be defined by \eqref{eq:wij}.
  \item If $(\lambda^W_{1})^2<\hat\rho$ do nothing and return $\hat V$.  
  \item Compute
    $U=\Lambda_W^2-\frac1\eta(\lambda^W)(\lambda^W)^\top$,
    eigenvalue decomposition $U=P_U\Lambda_U P_U^\top$,\\
    with $P=[u_1,\dots,u_h]$.
  \item For each $\hati=1,\dots,h$ with $(\Lambda_U)_{\hati\hati}\ge\hat\rho$ do:\\
    Compute $\hat w_{\hati\hati}=\sum_{i=1}^h (u_\hati)_iw_{ii}$ ($\in\R^{h+1\choose 2}$).\\
    If $(\Lambda_U)_{\hati\hati}\sum_{j=1}^{h+1\choose 2}(
    \hat w_{\hati\hati})_j^2\|(B^\top)_{\bullet,J(j)}\|_{D^{-1}}^2\ge\underline{\rho}$ then:
    \begin{compactenum}[(a)]
    \item Compute $u_{\hati}^F$ according to \eqref{eq:ujF} and set 
      \begin{displaymath}
        \alpha\gets\ip{\lambda^W}{u_\hati^F} \tfrac{1}{\one_t^\top\frakX_t\one_t} \big(1-\tfrac{\sqrt{\zeta^{-1}\sigma}}{\sqrt{\eta}}\big),
      \end{displaymath}
      \begin{displaymath}
        \hat b_{\hati\hati}=(B^\top)_{\bullet,J}\sum_{i=0}^h (u_\hati^F)_i\lambda^W_iw_{ii}-\alpha B^\top \frakX_t\one_t.
      \end{displaymath}
    \item If $\|\hat b_{\hati\hati}\|_{D^{-1}}^2\ge\underline\rho$ set $\hat V\gets [\hat
      V,\hat b_{\hati\hati}]$.
    \end{compactenum}
  \item For each $1\le\hati<\hatj\le h$ with $\lambda^W_{\hati}\lambda^W_{\hatj}>\hat\rho$ do:\\
    If $\sqrt{\lambda^W_{\hati}\lambda^W_{\hatj}}\sum_{j=1}^{h+1\choose 2}(
    \hat w_{\hati\hatj})_j^2\|(B^\top)_{\bullet,J(j)}\|_{D^{-1}}^2\ge\underline{\rho}$ set \\    
    $$\hat b_{\hati\hatj}=\sqrt{\lambda^W_{\hati}\lambda^W_{\hatj}}(B^\top)_{\bullet,J}w_{\hati\hatj}$$
    and if $\|\hat b_{\hati\hatj}\|_{D^{-1}}^2\ge\underline{\rho}$ set $\hat V\gets [\hat
      V,\hat b_{\hati\hatj}]$.
      \item Return $\hat V$.
    \end{compactenum}
\end{algorithm}
As for the
linear case it can be argued that for small barrier parameter $\mu$
the number of selected columns corresponds at least to the order of the
active submatrix in the cutting model. Thus if $\hat h\le h$ eigenvalues of $X\in\Sym^h_+$
converge to positive values in the optimum, the heuristic will end up
with selecting at least ${\hat h+1\choose 2}$ columns once $\mu$ gets small.

For second order cones $\soc{h}$ the structural properties of the arrow operator and
the Nesterov-Todd-direction allow to restrict considerations to just
two directions per cone for preconditioning, but as the computational
experiments do not involve second order cones this will not be discussed here.

\section{Numerical Experiments}\label{S:experiments}

The purpose of the numerical experiments is to explore and compare the
behavior and performance of the pure and preconditioned iterative variants to
the original direct solver on KKT instances that arise in the course
of solving large scale instances by the conic bundle method.

It has to be emphasized that the experiments are by no means designed
and intended to investigate the efficiency of the conic bundle method
with internal iterative solver. Indeed, many aspects of the
ConicBundle code \cite{ConicBundle2021} such as the cutting model
selection routines, the path following predictor-corrector approach
and the internal termination criteria have been tuned to work
reasonably well with the direct solver. As the theory suggests and the
results support, the performance of iterative methods depends more on
the size of the active set than on the size of the model. Thus
somewhat larger models might be better in connection with iterative
solvers. Also, the predictor-corrector approach is particularly
efficient if setting up the KKT system is expensive. For iterative
methods with deterministic preconditioning this hinges on the cost of
forming the preconditioner which gets expensive once the barrier
parameter gets small. Furthermore iterative methods might actually
profit from staying in a rather narrow neighborhood of the central
path. Therefore many implementational decisions need to be reevaluated
for iterative solvers. This is out of scope for this paper. Hence, the
experiments only aim to highlight the relative performance of the
solvers on sequences of KKT systems that currently arise in
ConicBundle. \upd{For the sole purpose of demonstrating the relevance
  of this KKT system based analysis, Section \ref{S:bundleperf} will
  present a comparison on the performance of ConicBundle when
  employing the KKT solver variants without any further adaptations of
  parameters.}

The \upd{KKT system oriented} experiments will report on the performance for three different
instances: the first, denoted by MC, is a classical semidefinite
relaxation of Max-Cut on a graph with 20000 nodes as described in
\cite{GoemansWilliamson95,HelmbergRendl2000}, the second, BIS, is a
semidefinite Minimum-Bisection relaxation improved by dynamic
separation of odd cycle cutting planes on the support of the Boeing
instance KKT\_traj33 giving a graph on 20006 nodes explained in
\cite{Helmberg2004}, and the third, MMBIS, refers to a
min-max-bisection problem problem shifting the edge weights so as to
minimize a restricted maximum cut on a graph of 12600 nodes. All three
have a single semidefinite cutting model which consists of a
semidefinite cone with up to one nonnegative variable, so the model
cone $\calS^t_+$ of \eqref{eq:St} typically has $t=(1,[],[h])$ for
some $h\in\N$.  In the Max-Cut instance the design variables are
unconstrained, in the Bisection instance the design variables
corresponding to the cutting planes are sign constrained ($D_y$ is
needed) and in the min-max-bisection problem some design variables
have bounds and there are linear equality and inequality constraints
($D_y$, $D_w$ and $A$ appear). Throughout, the proximal term is a
multiple of the identity for a dynamic weight, \ie $\upd{\mathfrak{H}}_k=u_kI$ with
$u_k>0$ controlled as in \cite{HelmbergKiwiel2002}.

In each case ConicBundle is run with default settings for the
internal constrained QP solver with direct
KKT solver for the bundle subproblems. Whenever a new KKT system arises, it is
solved consecutively but independently on the same machine by
\begin{compactitem}
\item (DS) the original direct solver,
\item (IT) MINRES without preconditioning (the implementation follows \cite{ElmanSilvesterWathen2006}),
\item (RP) MINRES with randomized preconditioning
  (Alg.~\ref{alg:precond} with  Alg.~\ref{alg:randprecond}),
\item (DP) MINRES with deterministic preconditioning (Alg.~\ref{alg:precond} with
Alg.~\ref{alg:deterministicprecond}).
\end{compactitem}
Only the results of the direct solver are then used to continue the
algorithm.  Note, for nonsmooth optimization problems tiny deviations
in the solution of the subproblem may lead to huge differences in the
subsequent path of the algorithm. Therefore running the bundle method
with different solvers would quickly lead to incomparable KKT
systems. \upd{That the chosen approach does not impair the validity of
  the conclusions regarding the performance of the solvers within the
  bundle method will be demonstrated in Section~\ref{S:bundleperf}.}

The details of the direct solver DS are of little relevance at this
point. Suffice it to say that its main work consists in Schur
complementing the $\upd{\mathfrak{H}}$ and $\zeta^{-1}\sigma$ blocks of the KKT system \eqref{eq:fullKKT} into the joined $\Diag(D_w^{-1},\frakX_t^{-1})$
block and factorizing this. In the Max-Cut setting (no $D_y$), the
$\upd{\mathfrak{H}}$ block is constant throughout each bundle subproblem. In this case the
Schur complement is precomputed once for each bundle subproblem ---
thus for several KKT systems --- and this makes this approach extremely
efficient as long as the order $h$ of the semidefinite model is
small. Precomputation is no longer possible if $D_y$ is needed which
is the case in the two other instances.  Finally, if $A$ is also
present, the system to be factorized in every iteration gets
significantly larger. These differences motivated the choice of the
instances and explain part of the strong differences in the performance
of the solvers.

For Max-Cut and Bisection the iterative solver could exploit the positive
definiteness of the system by employing conjugate gradients instead of
MINRES. The min-max-bisection problem comprises equality constraints
in $A$, so the system is no longer positive definite and conjugate
gradients are not applicable. Employing MINRES for all three
facilitates the comparison, in particular as MINRES seemed to perform
numerically better on the other instances as well. MINRES computes the residual norm with respect to the inverse of the preconditioner and
the implementation uses this norm for termination. To safeguard against
effects due to the changes in this norm, the relative precision
requirement \upd{$\min\{10^{-6},10^{-2}\mu\}$ of ConicBundle}
is multiplied, in the notation of Alg.~\ref{alg:precond}, by the
factor $\big(\sqrt[m]{\prod_{i=1}^{\hat k}(1+\hat\lambda_i)^{-1}}\cdot\min_i (D^{-1})_i\big)^{\frac12}$.

The results on the three instances will be presented in eight plots per
instance. The first four compare all four solvers, the last four plots
are devoted to information that is only relevant for iterative
solvers, so DS will not appear in these. 
\begin{compactenum}
\item Plot ``time per subproblem (seconds)'' gives for each of the
  four methods a box plot on the seconds (in logarithmic scale)
  required to solve the subproblems.  For each subproblem this is the
  sum of the time required for initializing/forming and solving all
  KKT systems of this subproblem.  This is needed, because in the case
  of Max-Cut instance MC, the direct solver DS forms the Schur complement of the
  $\upd{\mathfrak{H}}$-block only once per subproblem and this is also accounted for
  here.
\item Plot ``subproblem time (seconds) per iteration'' displays
  the same cumulative time per subproblem in seconds (in logarithmic
  scale) for each successive iteration so that the development
  in solution time is aligned to the progress of the bundle method. 
\item Plot ``time per subproblem vs.\ bundle size'' serves to
  highlight the dependence of the solution time on the size of the
  cutting model (number of rows of $B$). For this the subproblems are
  grouped in the bundle size ranges $(0,50]$, $(50,500]$,
  $(500,1500]$, $(1500,\infty]$. Instead of \upd{infinity the actual
    observed maximum is listed} in the bottom line of the plot.
\item Plot ``time per subproblem vs.\ last $\mu$'' illustrates the
  dependence of the solution time on the last barrier parameter $\mu$
  for which the subproblem has to be solved. Roughly this corresponds
  to the precision required for the subproblem. \upd{Because of the
  comparatively small number of subproblems and the strongly differing
  ranges of last $\mu$ values} results are presented
  for a subdivision of the subproblems into four groups of equal
  cardinality (up to integer division) sorted according to the $\mu$
  value of their respective last KKT system. The minimum $\mu$
  of each group is given in the bottom line of the plot.
\item Plot ``time per KKT system (seconds)'' compares exclusively the
  iterative methods on the KKT systems belonging to the four different
  ranges of the barrier parameter $\mu$ as collected over all
  subproblems. The first three box plots give
  the box plot statistics on the seconds (in logarithmic scale)
  spent in solving KKT systems for barrier parameter values
  $\mu\ge 100$, the next three for $100>\mu\ge 1$, etc. Note, DS
  would require the same time for all KKT systems of the same
  subproblem, because its solution time does not depend on $\mu$
  or the associated required relative precision \upd{described above}.
\item Plot ``matrix vector multiplications per KKT system'' shows box plots on
  the number of matrix-vector multiplications (in logarithmic scale)
  needed by MINRES, again subdivided into the same ranges of barrier
  parameter values.
\item Plot ``KKT system condition number estimate'' presents the box plot
  statistics of an estimate of the condition number (in logarithmic
  scale) for the same ranges of the barrier parameter. The estimate
  is obtained by a limited number of Lanczos iterations on the respective
  (non-)pre\-con\-di\-tioned system of the $\upd{\mathfrak{H}}$ block; a possibly remaining
  equality part of $A$ is ignored in this. Computation times for the
  condition number are not included in the time measurement listed above.  
\item Plot ``preconditioning columns per KKT system'' gives the box
  plot statistics of the number of columns $\hat k$ in
  Alg.~\ref{alg:precond} for RP and DP for the usual ranges of the
  barrier parameter.
\end{compactenum}
In all box plots, the width of the boxes indicates the relative size of
the number of instances in the group, the horizontal lines of the boxes give the
values of the upper quartile, the median and the lower quartile. The
upper whisker shows the largest value below upper
quartile$ + 1.5\cdot$IQR, where IQR$ = $(upper quartile $-$ lower
quartile) is the interquartile range. The lower whisker displays the
smallest value above lower quartile$ - 1.5\cdot$IQR. The stars show
maximum and minimum value.

Computation times refer to a virtualized compute server of 40 Intel Xeon Processor (Cascadelake) cores with 600 GB RAM under Ubuntu 18.04. This virtual machine
is hosted on hardware consisting of two processors Intel(R) Xeon(R)
Gold 6240R CPU with 2.40GHz with 24 cores and 768 GB RAM. The code,
however, is purely sequential and does not exploit any parallel
computation possibilities. 

\subsection{Max-Cut (Instance MC, Figure \ref{fig:MC})}

\begin{figure}[htbp]
\centerline{
\begin{tabular}{@{}cc@{}}
\includegraphics[width=.48\textwidth]{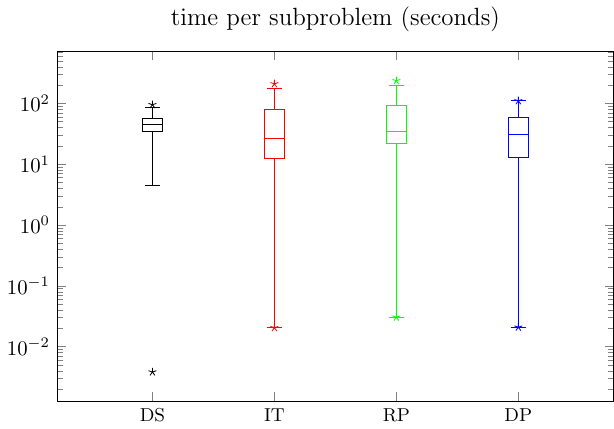}
&
\includegraphics[width=.48\textwidth]{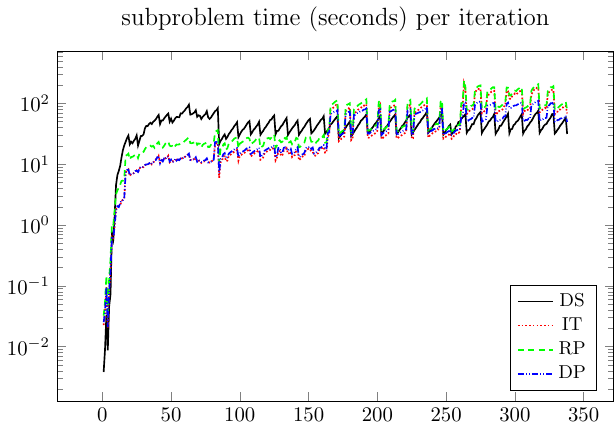}
\\[1ex]
\includegraphics[width=.48\textwidth]{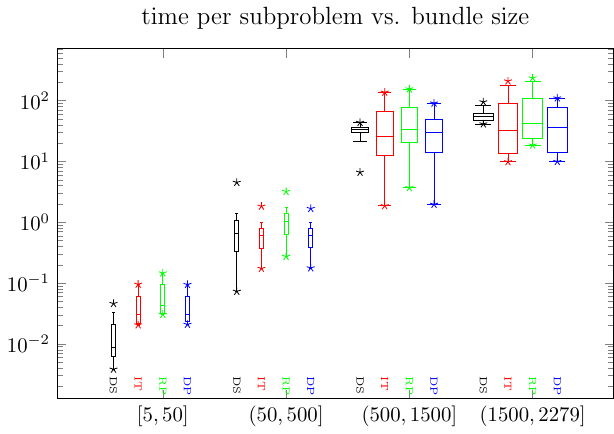}
&
\includegraphics[width=.48\textwidth]{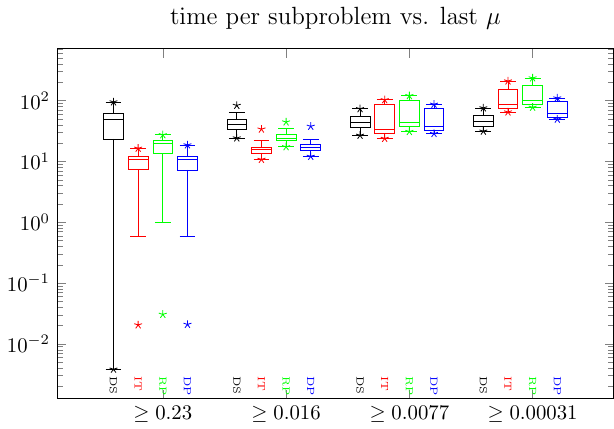}
\\[1ex]
\includegraphics[width=.48\textwidth]{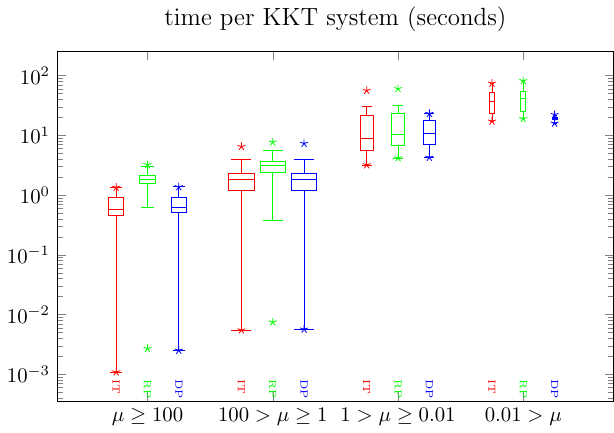}
&
\includegraphics[width=.48\textwidth]{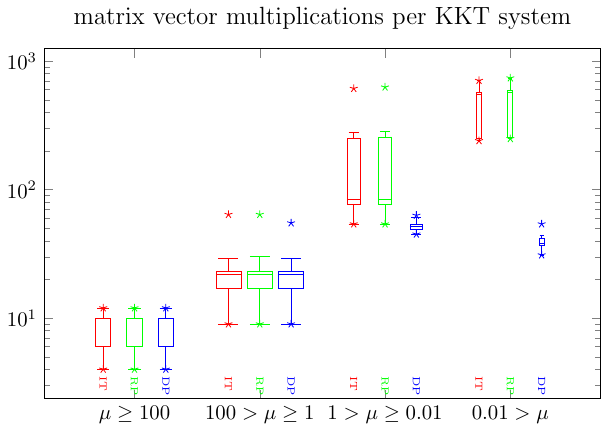}
\\[1ex]
\includegraphics[width=.48\textwidth]{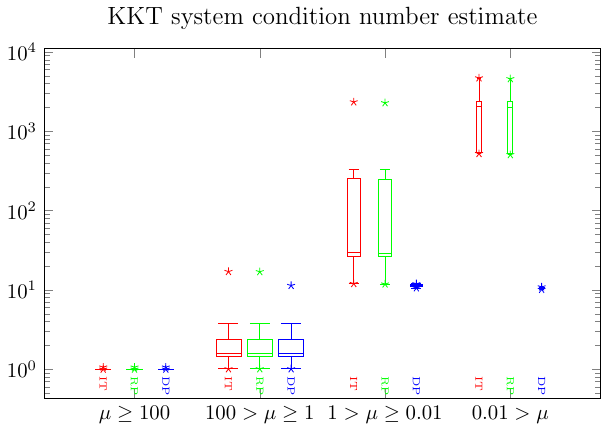}
&
\includegraphics[width=.48\textwidth]{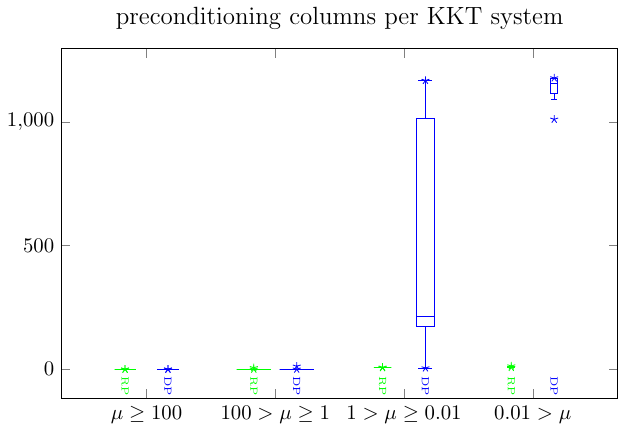}
\end{tabular}
}\vspace*{-.5ex}
\caption{\label{fig:MC} Instance MC, 338 subproblems, 2832 KKT systems. In order to highlight the dependence on the barrier parameter $\mu$ with corresponding precision requirements the results for KKT systems are grouped into $\mu$-value ranges $(\infty,100]$, $(100,1]$, $(1,0.01]$, $(0.01,0)$.}
\end{figure}

The graph was randomly generated (\cite{rudy}, call \mbox{\texttt{rudy
    -rnd\_graph 20000 1 1}} for 20000 nodes, edge density one percent,
seed value 1). The semidefinite relaxation gives rise to an
unconstrained problem with $20000$ variables. Each variable influences
one of the diagonal elements of the Laplace matrix of the graph with
cost one and the task is to minimize the maximum eigenvalue of the
Laplacian times the number of nodes, see \cite{HelmbergRendl2000} for the
general problem description.

For graphs of this type but smaller size like 5000 or 10000 nodes the
direct solver DS still seemed to perform better, so rather large sizes
are needed to see some advantage of iterative methods. Other than that
the relative behavior of the solvers was similar also for the smaller
sizes. The jaggies within subproblem time in the second plot are due
to the reduction of the model to its active part after each descent
step while the model typically increases in size during null steps.
During the very first iterations the bundle is tiny and DS is the best
choice.  Once the bundle size increases sufficiently, the iterative
methods dominate. Over time, as precision requirements get higher and
the choice of the bundle subspace converges, the advantage of
iterative methods decreases. In the final phase of high precision the
direct solver may well be more attractive again.

The plots also show that for this instance (and presumably for most
instances of this random type) the performance of IT (MINRES without
preconditioning) is almost as good as DP (deterministic
preconditioning) while RP (randomized preconditioning) is not
competitive. Note that the condition number does not grow excessively
for IT in this instance. Deterministic preconditioning succeeds
in keeping the condition number almost exactly at the intended
value 10. For smaller values of $\mu$, so for higher precision
requirements, DP requires distinctly fewer matrix-vector
multiplications, but it then also selects a large number of columns. In
comparison to no preconditioning DP helps to improve stability but
does not lead to significantly better computation times except maybe
for the very last phase of the algorithm with high precision
requirements.

\subsection{Minimum Bisection (Instance BIS, Figure \ref{fig:BIS})}

\begin{figure}[htbp]
\centerline{
\begin{tabular}{@{}cc@{}}
\includegraphics[width=.48\textwidth]{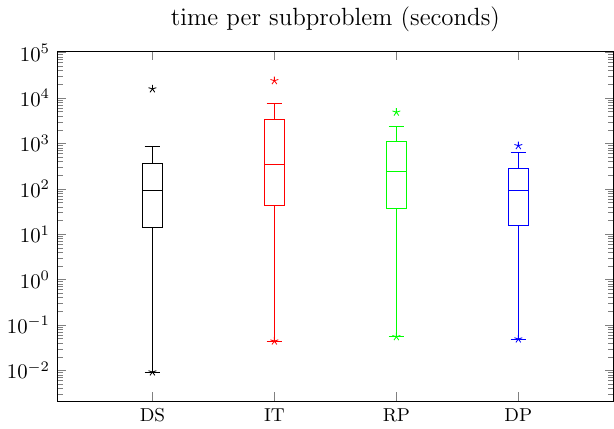}
&
\includegraphics[width=.48\textwidth]{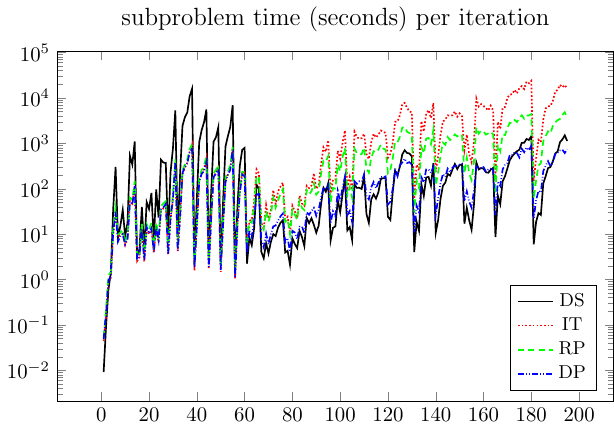}
\\[1ex]
\includegraphics[width=.48\textwidth]{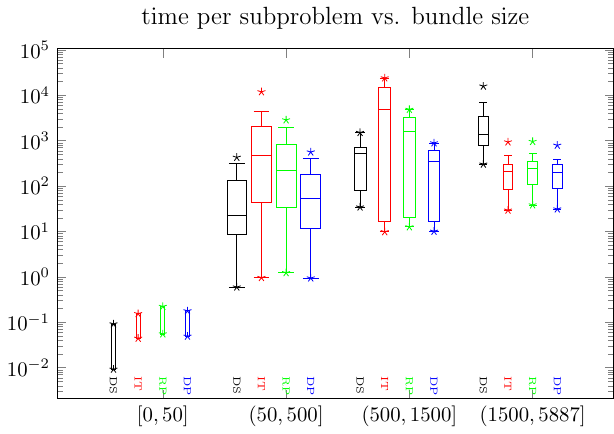}
&
\includegraphics[width=.48\textwidth]{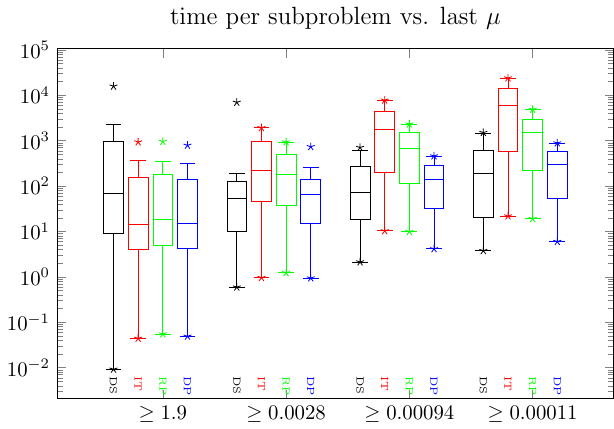}
\\[1ex]
\includegraphics[width=.48\textwidth]{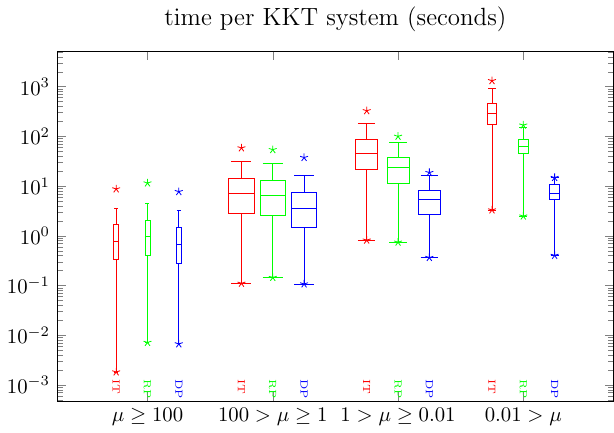}
&
\includegraphics[width=.48\textwidth]{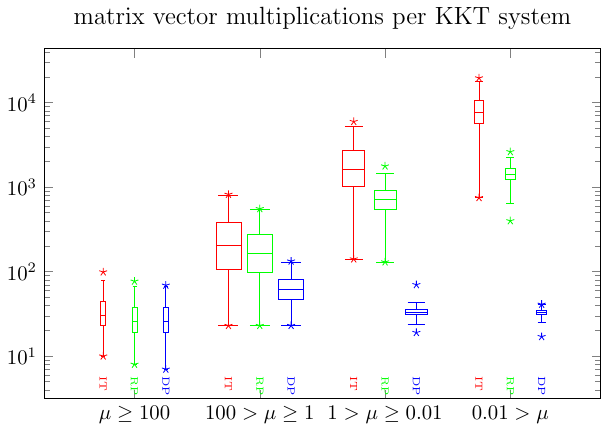}
\\[1ex]
\includegraphics[width=.48\textwidth]{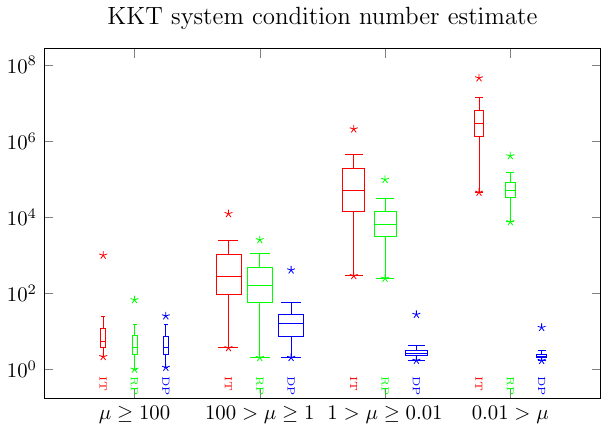}
&
\includegraphics[width=.48\textwidth]{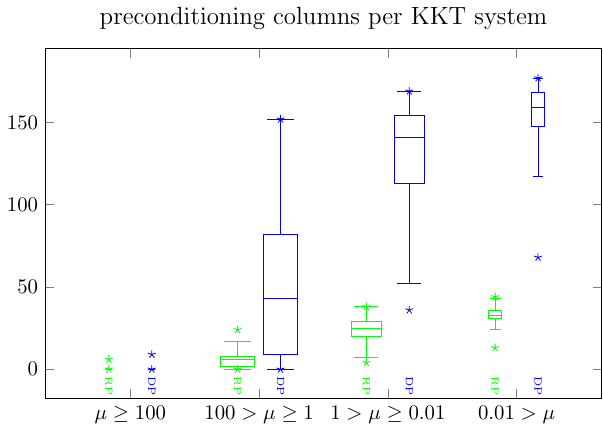}
\end{tabular}
}\vspace*{-.5ex}
\caption{\label{fig:BIS} Instance BIS, \upd{195} subproblems, \upd{6180} KKT systems. In order to highlight the dependence on the barrier parameter $\mu$ with corresponding precision requirements the results for KKT systems are grouped into $\mu$-value ranges $(\infty,100]$, $(100,1]$, $(1,0.01]$, $(0.01,0)$.}
\end{figure}

The semidefinite relaxation of minimum bisection is similar in nature
to max-cut, but in addition to the single diagonal elements there is a
variable with coefficient matrix of all ones. Furthermore, variables
with sparse coefficient matrices corresponding to odd cycles in the
underlying graph are added dynamically in rounds, see
\cite{Helmberg2004} for the general framework and also for the origin
of the instance KKT\_traj33 with 20006 nodes and roughly 260000 edges.

Again, after the very first iterations the iterative methods turn out
to perform distinctly better in the initial phase of the
algorithm. Iterative methods get less attractive as precision
requirements increase.  The model size is often rather small (a bit
larger than the active set of about 150 columns) which is favorable for
DS. Indeed, additional output information of the log file indicates
that the performance of DS drops off whenever the cutting model is
significantly larger than that.

While for this instance RP is better than IT, the advantage of DP over
the other iterative variants is quite apparent and its superiority
also increases with precision requirements and smaller $\mu$. In fact,
for DP the condition number and the number of matrix-vector multiplications
decrease again for smaller $\mu$. Possible causes might be that the
active set is easier to identify correctly.  Due to the reduction in
matrix-vector multiplications, computation time does not increase for
DP in spite of a growing number of columns in the preconditioner.

\subsection{A Min-Max-Bisection Problem (Instance MMBIS, Figure \ref{fig:MMBIS})}

This problem arose in the context of an unpublished
attempt\footnote{together with B. Filipecki (TU Chemnitz), S. Heyder
  (TU Ilmenau), Th.\ Hotz (TU Ilmenau) within BMBF-project grant
  05M18OCA.} to optimize vaccination rates for five population groups
$N_1\dot\cup\dots\dot\cup N_5=N$ in a virtual town of $n=|N|$
inhabitants. Briefly, within the town $k$ anonymous people are assumed to be
infectious. There is vaccine for at most $\underline n$ people. The
aim is to reduce the spreading rate of the disease by vaccinating each
person with the respective group's probability. The task of
determining these vaccination rates motivated the following ad hoc model
which would be hard to justify rigorously. In a graph
$G=(N,E)$ each edge $ij=\{i,j\}\in E$ with $i\in N_\hati$,
$j\in N_\hatj$ has a weight $\hat w_{\hati\hatj}$ representing the infectiousness
of the typical contact for these two persons of the respective groups.
It will be convenient
to define the weighted Laplacians $L_{\hati\hatj}=\hat w_{\hati\hatj}\sum_{ij\in E, i\in N_\hati,j\in N_\hatj}(e_i-e_j)(e_i-e_j)^\top$. In
this simplified approach, vaccination rates $v_\hati,v_\hatj$ of the node
groups reduce a nominal infectiousness $\hat w_{\hati\hatj}$ between
these groups by the factor
$y_{\hati\hatj}\ge\max\{0,1-v_\hati-v_\hatj\}$.
The spreading probability to be minimized is considered proportional to the restricted
max-cut value
\begin{eqnarray*}
  &\displaystyle\max_{\calI\subset N,|\calI|=k}\sum y_{\hati\hatj}\hat w_{\hati\hatj}\cdot|\{ij\in E\colon i\in\calI\cap N_\hati,j\in N_\hatj\setminus\calI\}|=&\\
  &=\displaystyle\max_{x\in\{-1,1\}^n\atop (\one^\top x)^2=(n-2k)^2}\frac14 x^\top\bigg(\sum_{\hati\le\hatj}y_{\hati\hatj}L_{\hati\hatj}\bigg)x,& 
\end{eqnarray*}
For determining the vaccination rates the combinatorial problem is replaced by the usual (dual) semidefinite relaxation
\begin{displaymath}
  \begin{array}{rl}
    \text{minimize} &\frac{n}4\lmax(\sum_{\hati\le\hatj}y_{\hati\hatj}L_{\hati\hatj}-\Diag(d)-u\one\one^\top)+\one^\top d+(n-2k)^2u\\
    \text{subject to} & y_{\hati\hatj}\ge 1-v_\hati-v_\hatj,\qquad\hati\le\hatj,\\
                    &\sum_{\hati} |N_\hati|v_\hati=\underline n,\\
    & d\in\R^n,u\in\R,y\ge 0, v\ge 0.
  \end{array}
\end{displaymath}

\begin{figure}[htbp]
\centerline{
\begin{tabular}{@{}cc@{}}
\includegraphics[width=.48\textwidth]{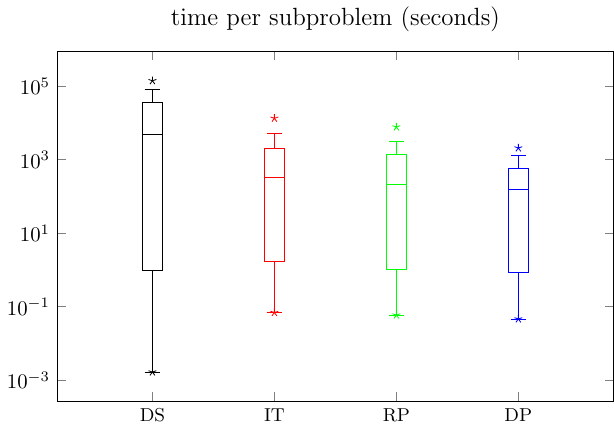}
&
\includegraphics[width=.48\textwidth]{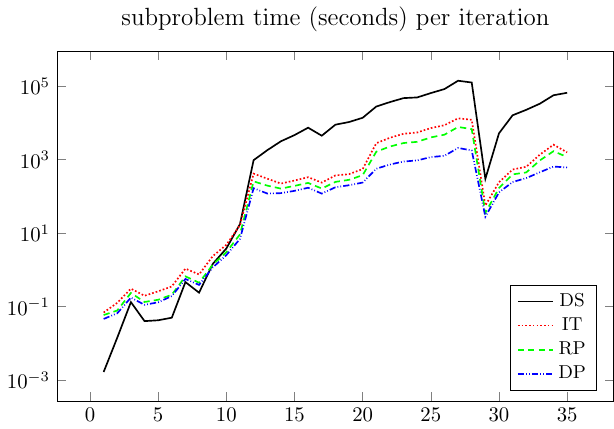}
\\[1ex]
\includegraphics[width=.48\textwidth]{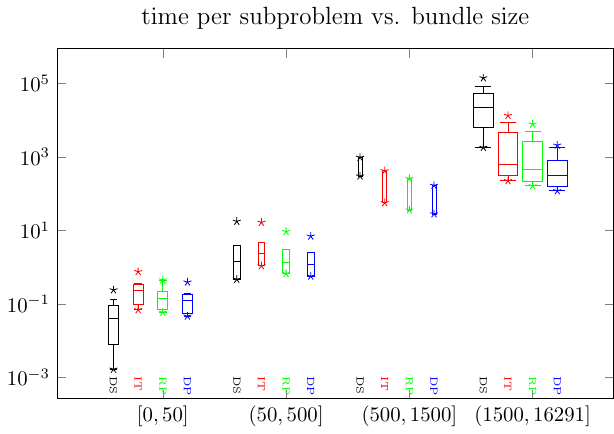}
&
\includegraphics[width=.48\textwidth]{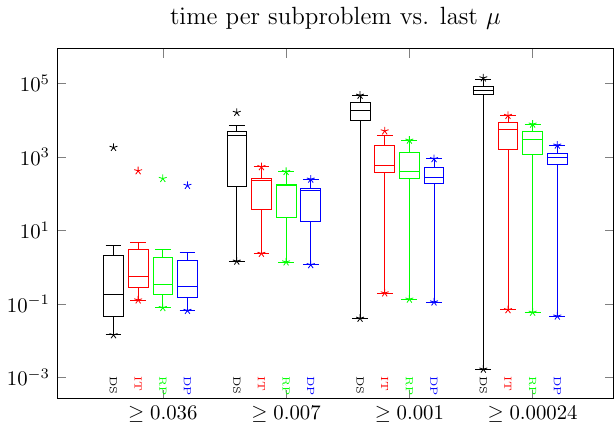}
\\[1ex]
\includegraphics[width=.48\textwidth]{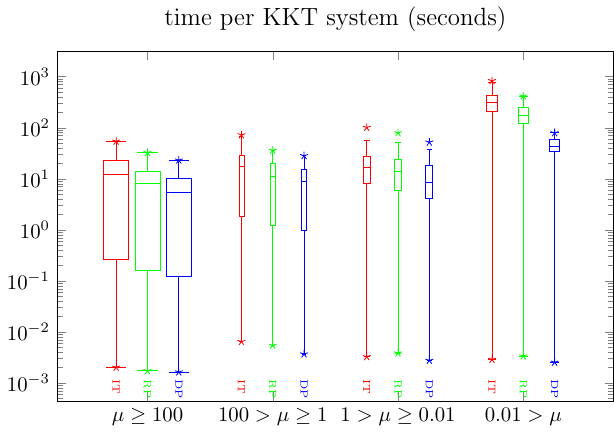}
&
\includegraphics[width=.48\textwidth]{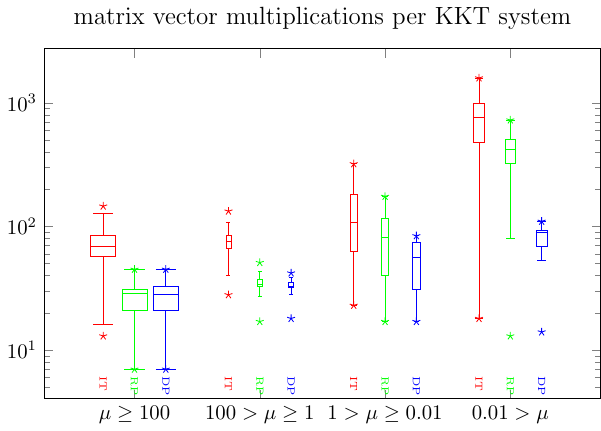}
\\[1ex]
\includegraphics[width=.48\textwidth]{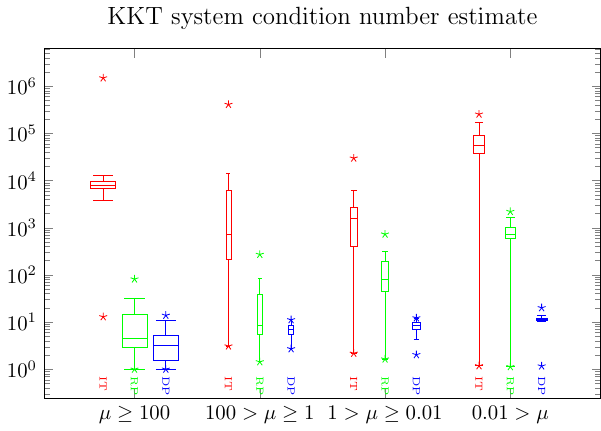}
&
\includegraphics[width=.48\textwidth]{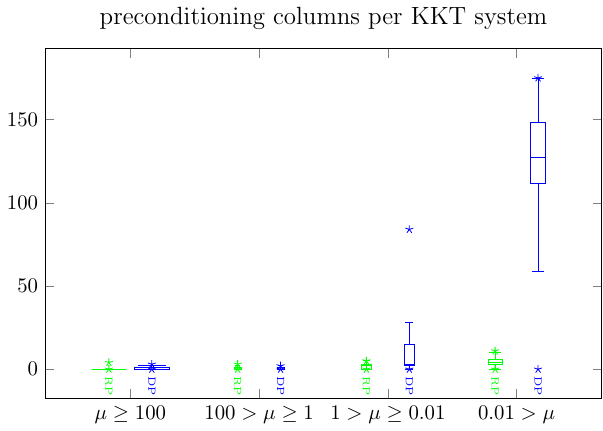}
\end{tabular}
}\vspace*{-.5ex}
\caption{\label{fig:MMBIS} Instance MMBIS, \upd{35} subproblems, \upd{782} KKT systems. In order to highlight the dependence on the barrier parameter $\mu$ with corresponding precision requirements the results for KKT systems are grouped into $\mu$-value ranges $(\infty,100]$, $(100,1]$, $(1,0.01]$, $(0.01,0)$.}
\end{figure}

In this case the resulting KKT system also has an equality and several inequality constraints in the block $A$. Preconditioning results are presented for the KKT systems of an instance with $n=12600$ inhabitants splitting into groups of sizes $5770, 6000, 600, 30, 200$, with $k=126$ infectious persons and $\underline n=1260$ available vaccinations.

In the actual computations the bundle size grows surprisingly
fast. This not only entails enormous memory requirements but also
excessive computation times for DS; indeed, computations of DS may
exceed those of DP by a factor of 70. In consequence comparative
results can only be reported for a very limited number of subproblem
evaluations. In particular, the precision requirements remain rather
moderate throughout these iterations. Still, the same initial behavior
can be observed as for the previous two instances. For very small
bundle sizes DS is best. Once the bundle size grows, the iterative
methods take over. Among the iterative solvers RP is better than IT,
but DP is the method of choice. It succeeds in tightly controlling the
condition number by selecting rather few columns. With this DP
requires the fewest matrix vector multiplications which seems to pay
of quickly on this instance.

\subsection{\upd{Performance within the Bundle Method for
  Max-Cut}}\label{S:bundleperf}

\upd{The purpose of this section is to provide evidence for the
  reliability of the KKT oriented evaluations when the iterative
  solvers are employed within the bundle method directly.  As
  explained in the introductory remarks to this Section
  \ref{S:experiments}, a full assessment of the use of iterative
  solvers within conic bundle methods is out of scope and beyond the
  possibilities of this work. Therefore results will only compare,
  without any further adaptations, the direct replacement of DS with
  the solvers IT, RP and DP, within the current ConicBundle
  implementation that was developed and tuned for DS. Note, however,
  that the evaluation of bundle methods requires a statistical approach.}

\upd{In oracle based nonsmooth optimization it is typical that even
  slight numerical changes in the computation of candidates bring
  along significant differences in the actual trajectories. Indeed,
  candidates are generically close to ridges. Which subgradient is
  returned depends on which side of the ridge the candidate ends
  up. In particular, the use of different KKT solvers quickly leads to
  considerable differences in the models and subproblems and therefore
  also in the sequence of KKT problems. This erratic behavior is intrinsic at
  any level of precision, therefore it may be expected that the
  average number of bundle iterations (descent and null steps) does
  not depend too much on the actual KKT solver in use. Yet, due
  to this incomparability of trajectories, any attempt to assess the
  scope of the iterative solvers in comparison to the direct solver
  needs to be based on a reasonable collection of comparable instances.
  Their choice should help to illustrate the effects of parameters, that can be
  expected to be influential in the current context,
  \begin{compactitem}
  \item the cost of matrix-vector multiplications,
  \item the size of the model,
  \item precision requirements,
  \item the use or non-use of a predictor corrector approach,
  \item the number of KKT instances and solves per subproblem.
  \end{compactitem}
  In order to cover these aspects with manageable effort, results will
  be presented for eight methods and four groups of 25 randomly
  generated Max-Cut instances. The methods without predictor corrector
  approach are denoted by DS, IT, RP, DP and those with predictor
  corrector approach by DSp, ITp, RPp, DPp. The names refer to using
  the respective direct or iterative solver for the KKT systems of the
  internal interior point method of ConicBundle for solving the
  subproblems.  The four instance classes arise by generating five
  instances per number of nodes $n\in\{10000,20000\}$ and per density
  out of two edge density groups, one with smaller densities
  $d\in\{0.1,0.2,0.3,0.4,0.5\}$ and one with higher density
  $d\in\{1,2,3,4,5\}$ (\cite{rudy}, call \texttt{rudy -rnd\_graph n d
    s} for seed $s\in\{1,2,3,4,5\})$. The instances were solved with
  ConicBundle \cite{ConicBundle2021} on computers having
  QUAD-Core-processors INTEL-Core-I7-4770 with 4$\times$ 3400MHz, 8 MB
  Cache, 32 GB RAM and operating system Ubuntu 18.04. The code was run
  in sequential mode with each instance solved en suite for all
  methods on the same machine and all time measurements refer to user
  time.  Mandated by limited resources, some volatility may have been
  caused by running two instances on each machine at the same time as
  well as by occasional further jobs. As instances and methods were
  randomly affected by this, influence on the conclusions should be
  marginal in view of the number of examples. }

\upd{The max-cut instances serve the purpose well for the
  following reasons. First, as explained before, the direct solver DS
  is particularly efficient for Max-Cut instances, because the Schur
  complement needs to be computed only once at the beginning of each
  bundle step for all interior point iterations / KKT systems
  associated with this subproblem. Thus, if iterative solvers are
  competitive for Max-Cut this should also hold for more general
  cases. Likewise, the iterative solver IT without preconditioning
  performed better on the KKT instances for Max-Cut than on those of
  the two other examples, therefore the limits of preconditioning are
  best discussed for Max-Cut. Second, for Max-Cut even large scale
  instances can be solved to reasonably high precision in manageable
  time which allows to compare the performance on several precision
  levels. To make this comparison reasonably efficient and fair in
  view of the weaknesses of the lack of progress stopping criterion of
  bundle methods, the comparisons use for each level of relative
  precision $10^{-3}$, $10^{-4}$, $10^{-5}$ and $10^{-6}$ the first
  descent step that produces a value below an instance dependent
  common relative reference value. For each instance this reference
  value is obtained by taking the minimum objective value obtained
  over all methods by running ConicBundle with termination precision
  $10^{-6}$.  Third, random max-cut instances having the same
  number of nodes and similar edge density can be expected to have
  similar parameters and properties in terms of model size, cost of
  matrix-vector multiplications, and precision requirements. Note that
  a higher edge density increases the cost of matrix-vector
  multiplications but also causes a larger offset in objective value
  which entails somewhat reduced precision requirements for the KKT
  systems to reach the same relative precision. As the experiments
  will show, the model size --- it is selected by ConicBundle on basis
  of the active rank, starts with roughly twice this size after
  descent steps for reasons investigated in \cite{DingGrimmer2020} and
  increases further over null steps --- seems to be less dependent on
  the edge densities but grows markedly with the number of nodes.}

\begin{figure}[htbp]
\centerline{
  \upd{\begin{tabular}{@{}cc@{}}
   \multicolumn{2}{c}{Development of subproblem / bundle step
    parameters for the 10000
    node instances}\\
    density $\in\{0.1,0.2,0.3,0.4,0.5\}$ &   density $\in\{1,2,3,4,5\}$\\
\includegraphics[width=.48\textwidth]{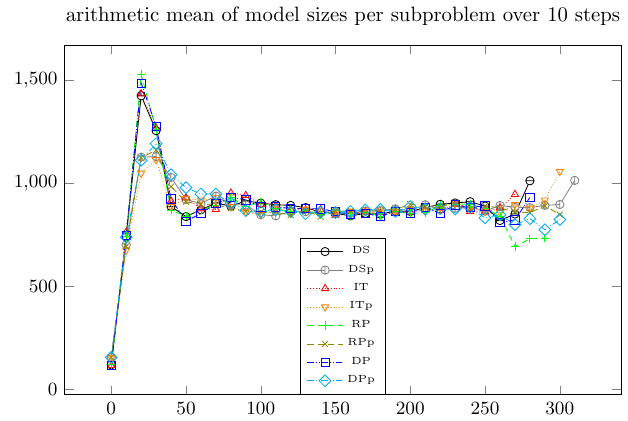}
&
\includegraphics[width=.48\textwidth]{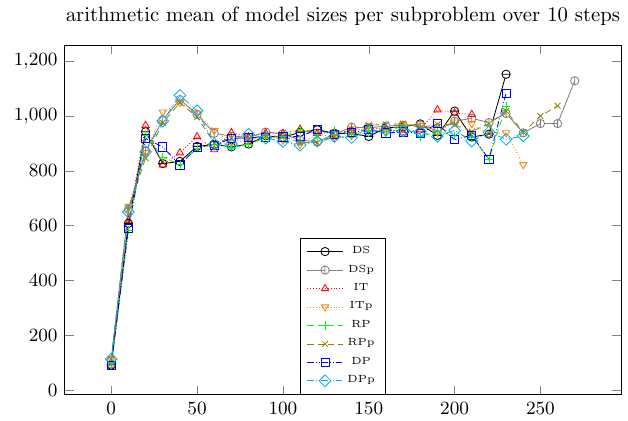}
\\[1ex]
\includegraphics[width=.48\textwidth]{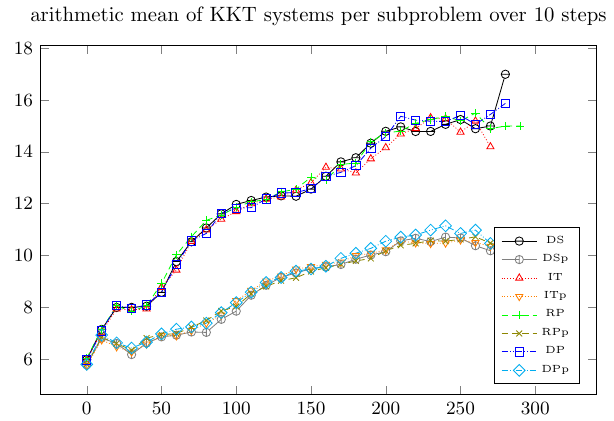}
&
\includegraphics[width=.48\textwidth]{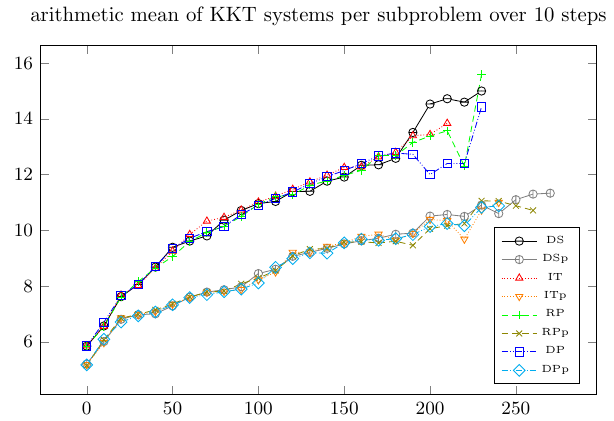}
\\[1ex]
\includegraphics[width=.48\textwidth]{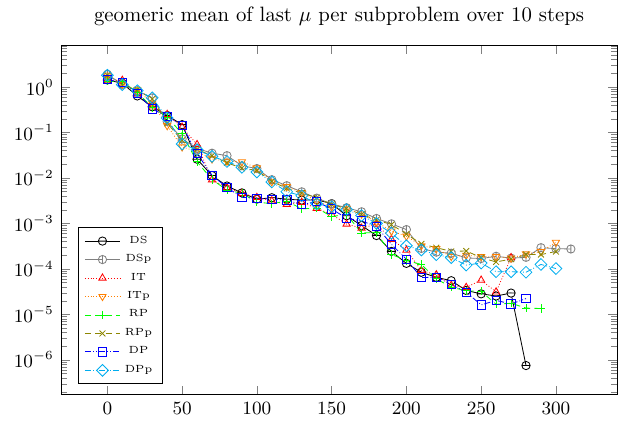}
&
\includegraphics[width=.48\textwidth]{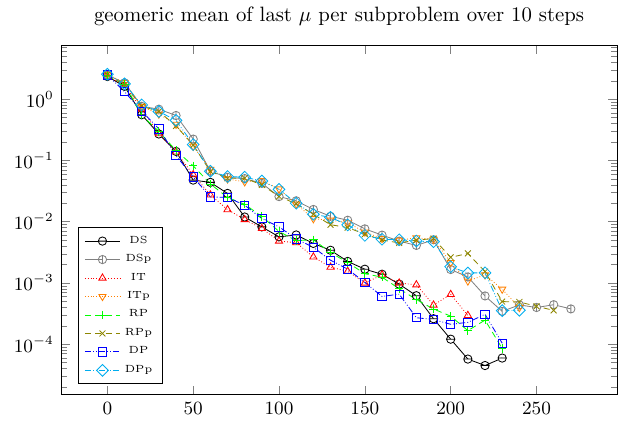}
\\[1ex]
\includegraphics[width=.48\textwidth]{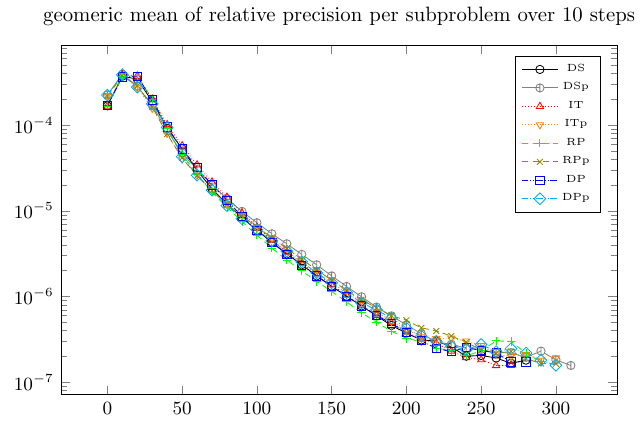}
&
\includegraphics[width=.48\textwidth]{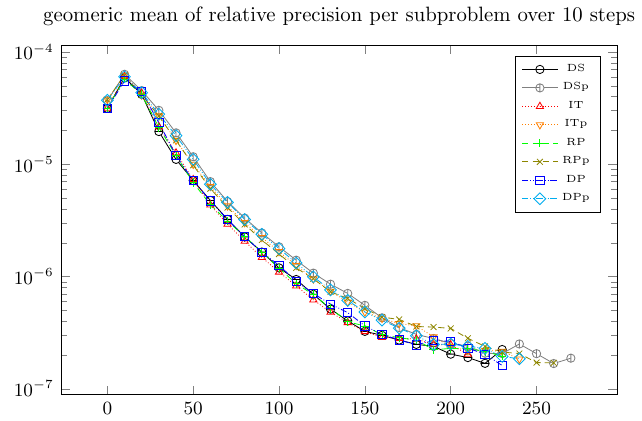}
\\[1ex]
\end{tabular}}
}\vspace*{-.5ex}
\caption{\label{fig:param1} \upd{The plots show for each method for
  successive groups of 10 bundle iterations (null or descent steps)
  over all instances the arithmetic mean of
  the model size and of the number of interior point iterations of the
  subproblems in this group and the geometric mean of the
  last $\mu$ value and of the relative precision the interior
  point method needed to reach. All iterations of the performance
  profile for relative precision level $10^{-6}$ are included.}}
\end{figure}

\upd{\begin{figure}[htbp]
\centerline{
  \upd{\begin{tabular}{@{}cc@{}}
   \multicolumn{2}{c}{Development of subproblem / bundle step
    parameters for the 20000
    node instances}\\
    density $\in\{0.1,0.2,0.3,0.4,0.5\}$ &   density $\in\{1,2,3,4,5\}$\\
\includegraphics[width=.48\textwidth]{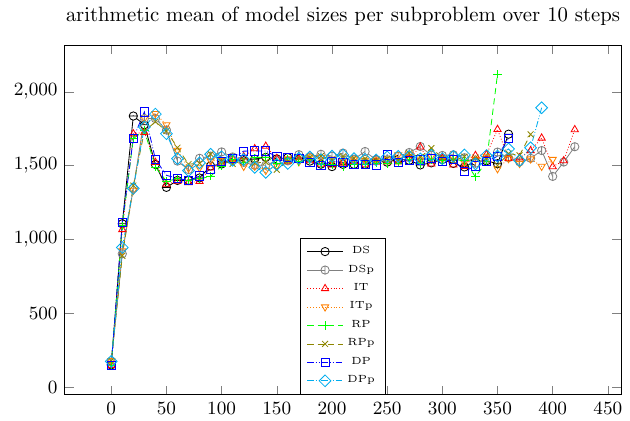}
&
\includegraphics[width=.48\textwidth]{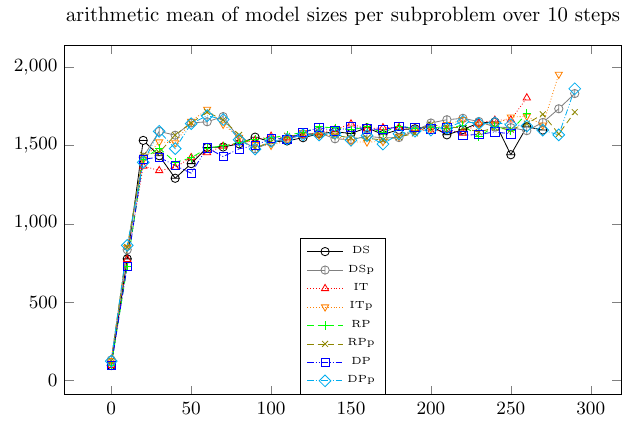}
\\[1ex]
\includegraphics[width=.48\textwidth]{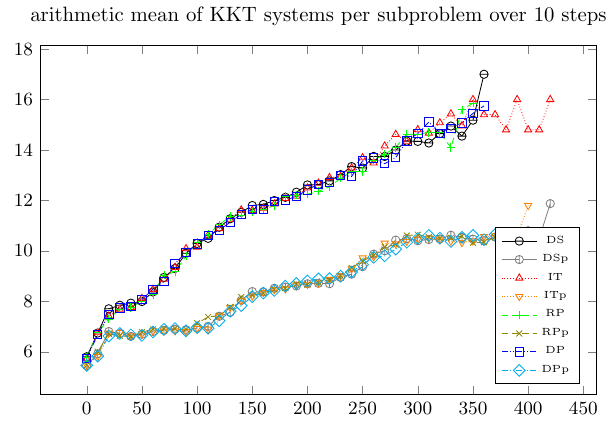}
&
\includegraphics[width=.48\textwidth]{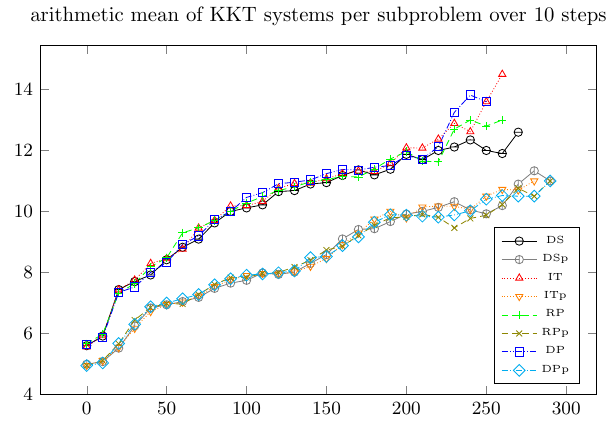}
\\[1ex]
\includegraphics[width=.48\textwidth]{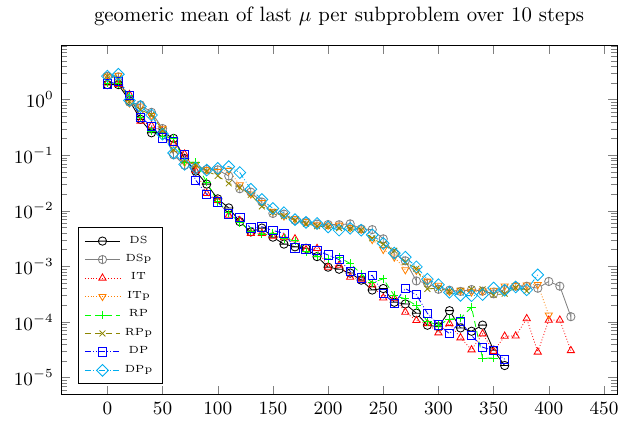}
&
\includegraphics[width=.48\textwidth]{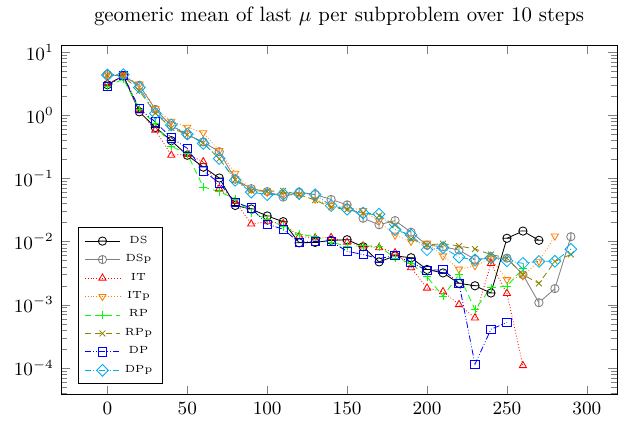}
\\[1ex]
\includegraphics[width=.48\textwidth]{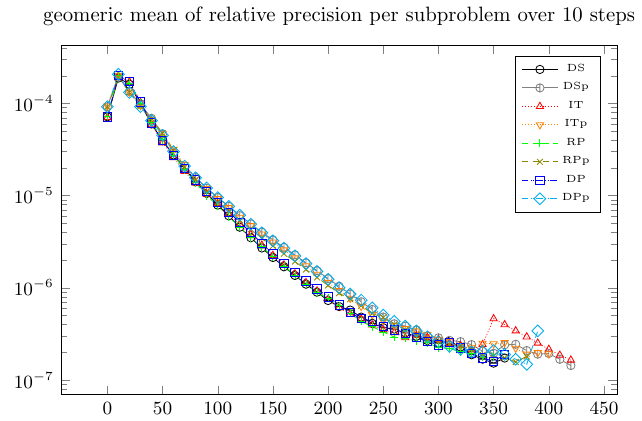}
&
\includegraphics[width=.48\textwidth]{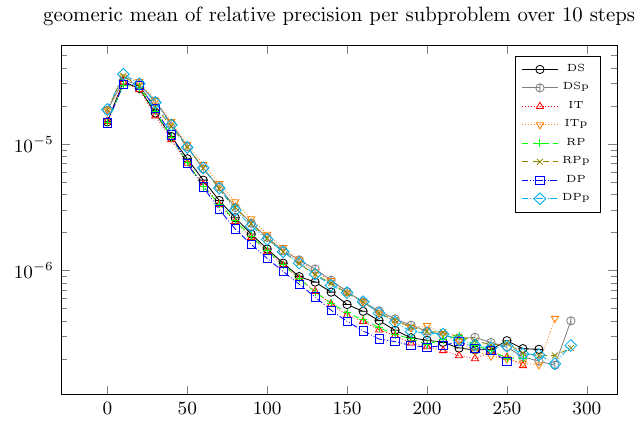}
\\[1ex]
\end{tabular}}
}\vspace*{-.5ex}
\caption{\label{fig:param2} \upd{The plots show for each method for
  successive groups of 10 bundle iterations (null or descent steps)
  over all instances the arithmetic mean of
  the model size and of the number of interior point iterations of the
  subproblems in this group and the geometric mean of the
  last $\mu$ value and of the relative precision the interior
  point method needed to reach. All iterations of the performance
  profile for relative precision level $10^{-6}$ are included.}}
\end{figure}}

\upd{The first aspect to address is the dependence of the bundle
  method on the solvers.  For this figures \ref{fig:param1} and
  \ref{fig:param2} display for each group and solver the average
  development of the model sizes, the number of KKT systems solved per
  subproblem together with the last $\mu$-value occuring there (it
  reflects the precision requirement of the final KKT system within
  the subproblem) and also the precision requirements on the
  subproblems themselves.  This development is recorded in averages
  over groups of 10 steps and all 25 instances for each of the four
  instance groups. This rather detailed view will also help to explain
  differences in the computation times of the solvers for various
  precision levels.}

\begin{figure}[htbp]
\centerline{
  \upd{\begin{tabular}{@{}cc@{}}
   \multicolumn{2}{c}{Performance profiles of bundle steps and KKT
         systems for sparser 20000 node instances}\\
\includegraphics[width=.48\textwidth]{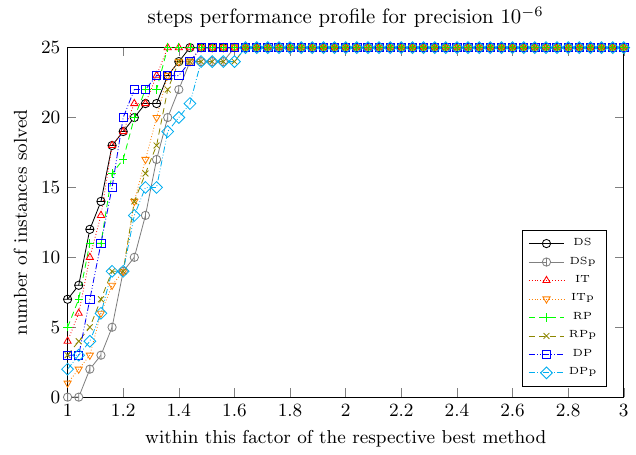}
&
\includegraphics[width=.48\textwidth]{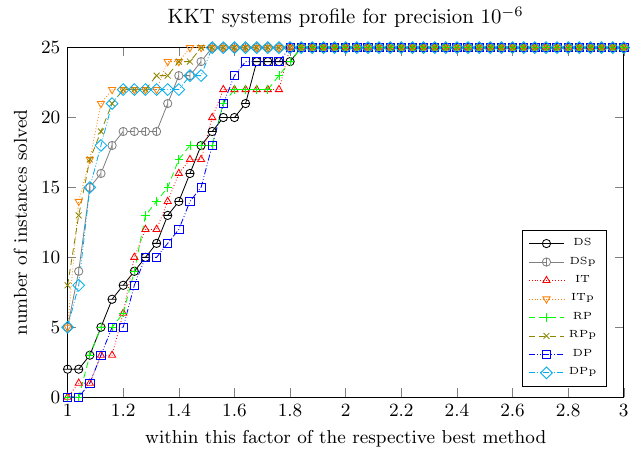}
\end{tabular}
}}\vspace*{-.5ex}
\caption{\label{fig:stepperf} \upd{The performance profiles are generated for the 20000 node
  instances having densities in $\{0.1,0.2,0.3,0.4,0.5\}$ with respect to the
  total number of bundle iterations (null and descent steps) [left] and the
  total sum of KKT systems over all subproblems [right] needed to
  reach a relative precision of $10^{-6}$ relative
  to the best value obtained over all methods.}}
\end{figure}

\upd{The plots of figures \ref{fig:param1} and \ref{fig:param2}
  exhibit a natural separation between the methods with and without
  predictor corrector approach. In comparison, the differences between
  the solvers are almost negligible except for a few final iterations,
  which also suffer from stronger volatility due to a reduced number
  of samples. It is worth noting that the same holds for the overall
  number of KKT systems and bundle steps throughout all relative
  precision levels. A helpful visualization to illustrate such
  comparisons are performance profiles \cite{DolanMore2002} for the
  total number of steps and KKT systems for each precision level.
  These turn out to have a shape similar to that displayed in
  Figure~\ref{fig:stepperf} which presents the two profiles for bundle
  steps and KKT systems for the sparser case on 20000 nodes and
  relative precision $10^{-6}$. While the smaller number of KKT
  systems (i.e. interior point iterations) of the predictor corrector
  variants is an expected outcome, it is rather surprising that the
  variants without predictor corrector seem to need a few bundle steps
  less on average to reach the required precision. A comparison with
  the model size plots of \ref{fig:param1} and \ref{fig:param2}
  suggests, that there is a distinct difference in the nature of these
  interior point solutions that also has its effect on the bundle
  selection mechanism, but so far this lacks a mathematically sound
  explanation.  Still, a first conclusion might read that the behavior
  of the bundle method itself is, on average, independent of the
  choice of the four solvers.}

\begin{figure}[htbp]
\centerline{
  \upd{\begin{tabular}{@{}cc@{}}
    \multicolumn{2}{c}{Time performance profiles for max-cut on 10000 nodes with 5
    instances per 5 densities}\\
    density $\in\{0.1,0.2,0.3,0.4,0.5\}$ &   density $\in\{1,2,3,4,5\}$\\
\includegraphics[width=.48\textwidth]{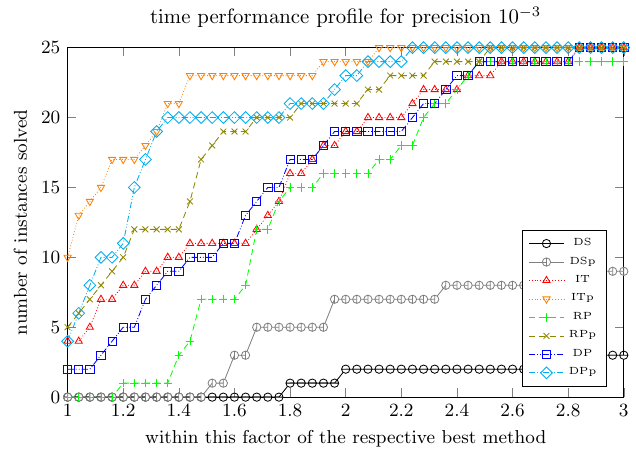}
&
\includegraphics[width=.48\textwidth]{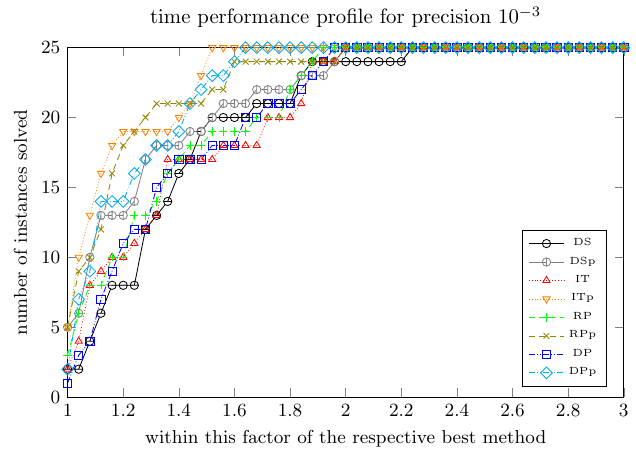}
\\[1ex]
\includegraphics[width=.48\textwidth]{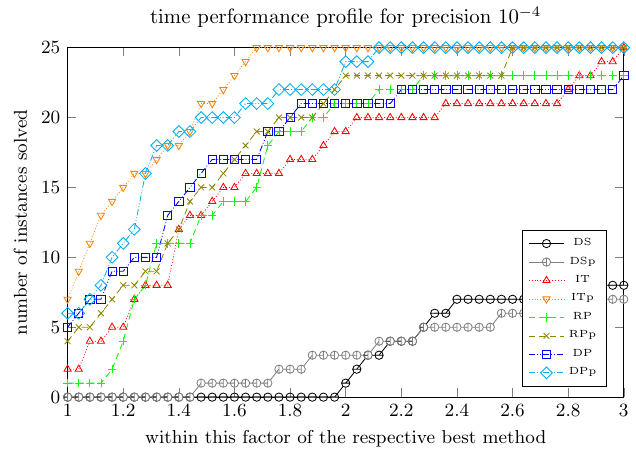}
&
\includegraphics[width=.48\textwidth]{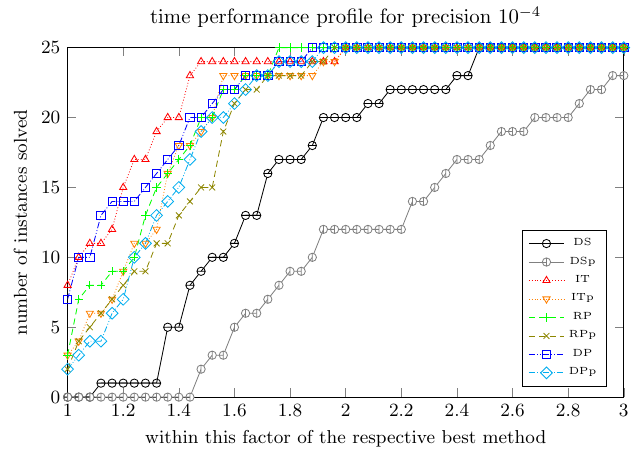}
\\[1ex]
\includegraphics[width=.48\textwidth]{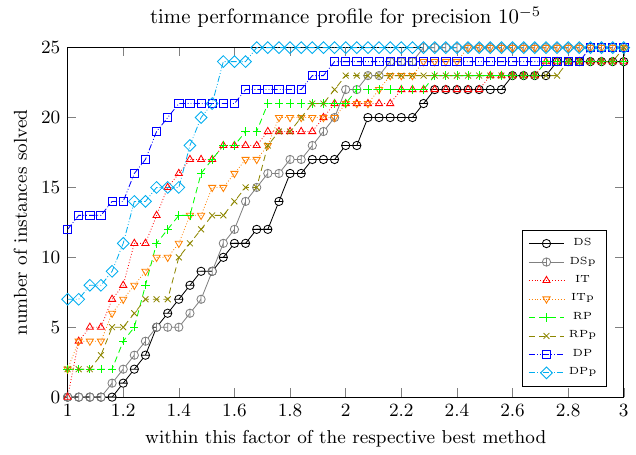}
&
\includegraphics[width=.48\textwidth]{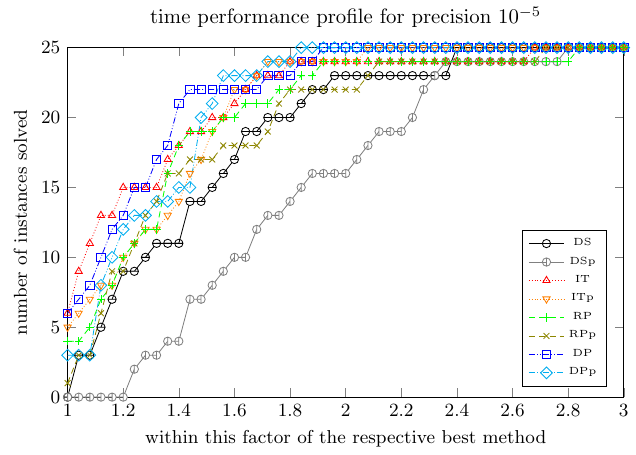}
\\[1ex]
\includegraphics[width=.48\textwidth]{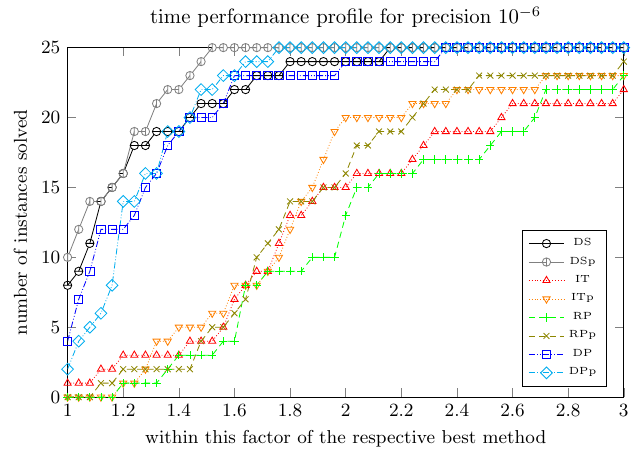}
&
\includegraphics[width=.48\textwidth]{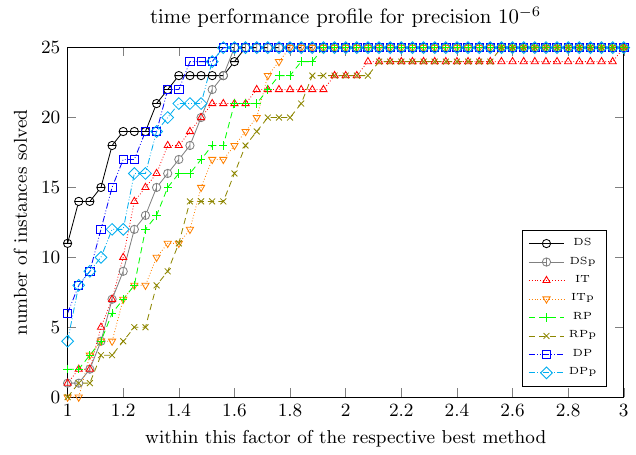}
\end{tabular}}
}\vspace*{-.5ex}
\caption{\label{fig:timeperf1} \upd{The profile is taken with respect to the time needed to
  reach a relative precision of $10^{-3}$, $10^{-4}$, $10^{-5}$, and $10^{-6}$ relative
  to the best value obtained over all methods.}}
\end{figure}

\begin{figure}[htbp]
\centerline{
  \upd{\begin{tabular}{@{}cc@{}}
   \multicolumn{2}{c}{Time performance profiles for max-cut on 20000 nodes with 5
   instances per 5 densities}\\
    density $\in\{0.1,0.2,0.3,0.4,0.5\}$ &   density $\in\{1,2,3,4,5\}$\\
\includegraphics[width=.48\textwidth]{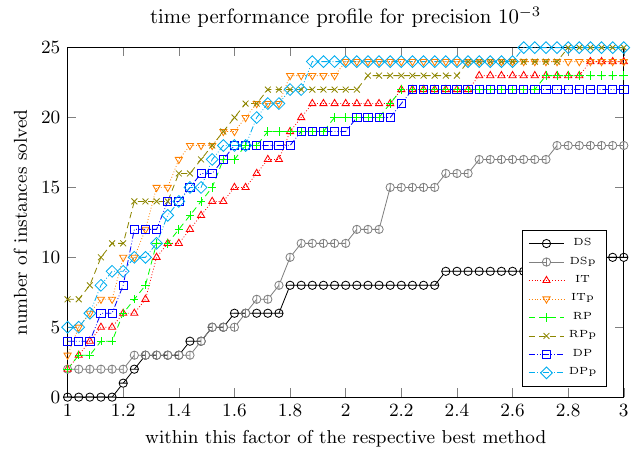}
&
\includegraphics[width=.48\textwidth]{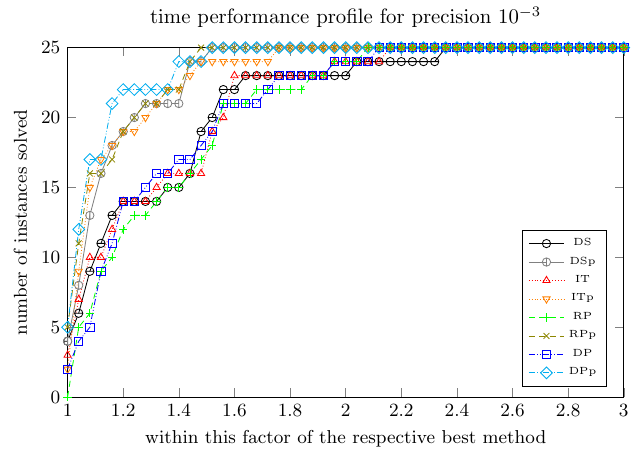}
\\[1ex]
\includegraphics[width=.48\textwidth]{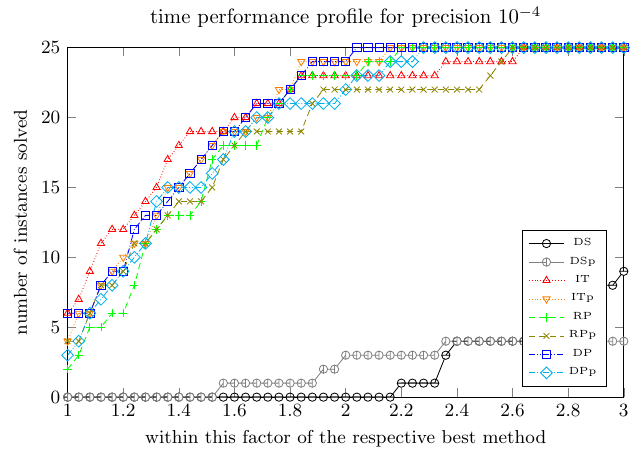}
&
\includegraphics[width=.48\textwidth]{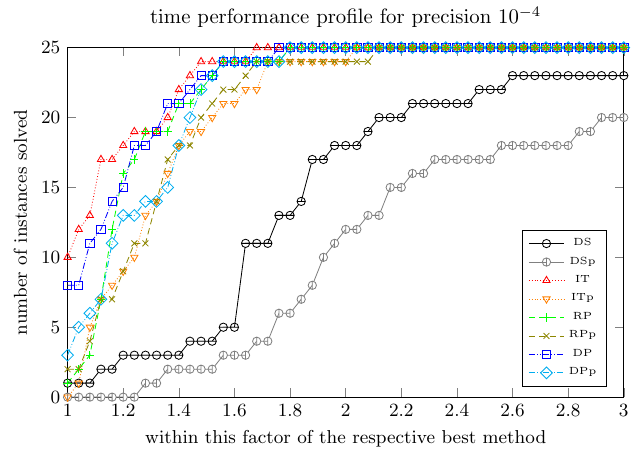}
\\[1ex]
\includegraphics[width=.48\textwidth]{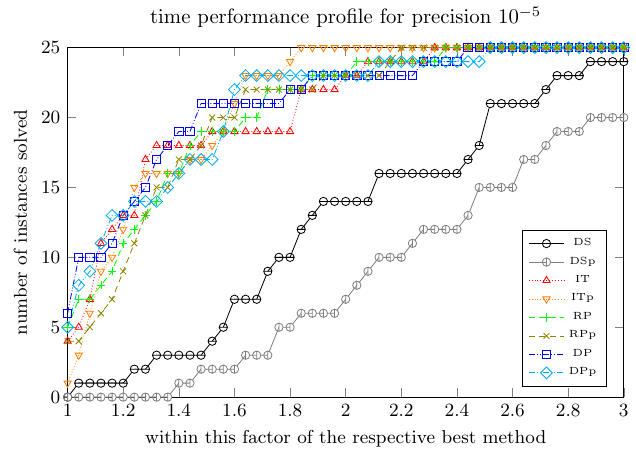}
&
\includegraphics[width=.48\textwidth]{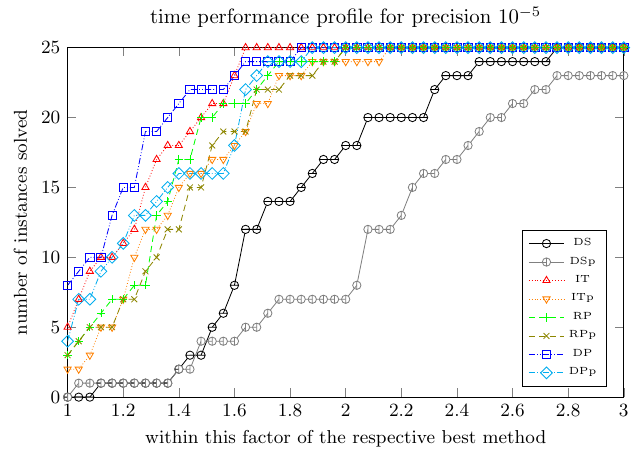}
\\[1ex]
\includegraphics[width=.48\textwidth]{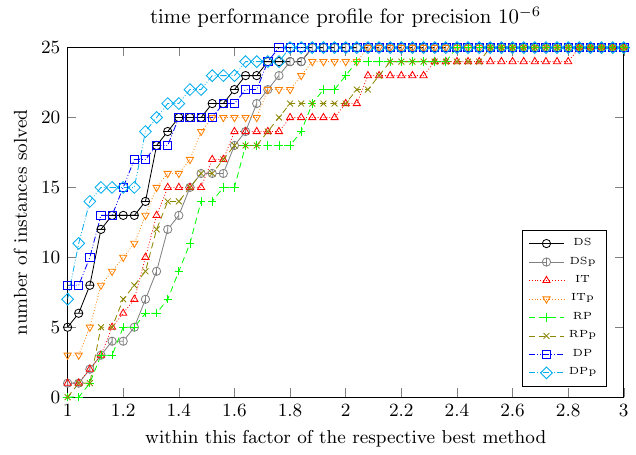}
&
\includegraphics[width=.48\textwidth]{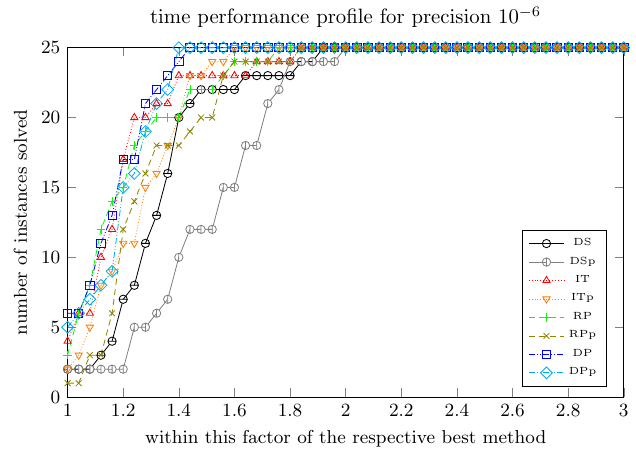}
\end{tabular}}
}\vspace*{-.5ex}
\caption{\label{fig:timeperf2} \upd{The profile is taken with respect to the time needed to
  reach a relative precision of $10^{-3}$, $10^{-4}$, $10^{-5}$, and $10^{-6}$ relative
  to the best value obtained over all methods.}}
\end{figure}

\upd{Figures \ref{fig:timeperf1} and \ref{fig:timeperf2} display
  computation time performance profiles of the eight methods on the
  four classes of 25 instances for relative precision levels
  $10^{-3}$, $10^{-4}$, $10^{-5}$ and $10^{-6}$.  These largely match
  the results on individual KKT systems. First consider the direct
  solvers DS and DSp. Again, a good explanation is lacking for the
  fact that DS dominates DSp in many cases with higher number of edges
  and higher precision. Whether DS and DSp are attractive compared to
  iterative methods depends on the ratio of the time invested into
  forming the Schur complement to the number of KKT steps required for
  solving the subproblem, \ie they are preferable if the model size is
  small or the number of interior point iterations becomes large
  enough due to increasing precision requirements.  In cases of strong
  initial growth of the model size (see the model size plots of
  figures \ref{fig:param1} and \ref{fig:param2}) iterative solvers are
  quickly better. The seemingly good performance of the direct solvers
  on the denser instances for precision level $10^{-3}$ is mostly due
  to the large constant offset that causes the methods to reach this
  precision often within ten steps (compare this to
  the asymptotic analysis in \cite{HelmbergMoharPoljakRendl95}); at
  this point model sizes are still small. Iterative solvers dominate
  precision levels $10^{-4}$ and $10^{-5}$ with reasonably low
  accuracy and few interior point iterations. The influence of the
  cost of a matrix-vector multiplication is visible in the difference
  of the initial head start to direct solvers between sparser and
  denser instances for precision $10^{-4}$. For instances on 20000
  nodes the average model size is one and a half times the average
  size of the 10000 node instances (see figures \ref{fig:param1} and
  \ref{fig:param2})) and this explains part of the stronger
  performance of the iterative solvers on larger instances. For
  increasing precision requirements and number of interior point
  iterations the profiles also suggest that DS and DSp catch up faster
  for instances with fewer edges. This effect might again be caused by the
  constant offset, that is larger for random Max-Cut instances with
  larger number of edges. Indeed, an inspection of the last $\mu$
  plots and KKT systems per subproblem in figures \ref{fig:param1} and
  \ref{fig:param2} suggests that for max-cut instances with a larger
  number of edges the subproblem solutions require less absolute
  accuracy which favors iterative solvers and compensates somewhat the
  higher cost of the matrix-vector multiplications. Note that the
  relative precision requirements for the solution of the subproblems
  (see figures \ref{fig:param1} and \ref{fig:param2}) are almost
  identical.}

\upd{For the iterative methods the predictor corrector variant seems
faster on lower precision levels but again the methods without
predictor corrector catch up or may even dominate higher precision
levels. For IT (no preconditioning) this should largely be due to the
fact that predictor corrector requires two solves per KKT system.
Thus, taking twice the number of KKT systems per problem for
predictor corrector variants in figures
\ref{fig:param1} and \ref{fig:param2} as the number of
required solves provides a satisfactory explanation for IT.
For RP and DP the situation is less clear cut, because
the preconditioner is formed only once per KKT system, but
the line of argument is similar. For the moderate accuracy
levels $10^{-3}$ and $10^{-4}$ Max-Cut instances could
do without preconditioning, but the preconditioned variants
do a good job. For $10^{-5}$ the advantage begins to show
and for $10^{-6}$ the DP variants are almost consistently
better than the other iterative methods.} 

\upd{Based on this analysis, a hybrid approach seems advisable that
  switches dynamically between the solvers depending on precision,
  model size and number of interior point iterations. In implementing
  these ideas a number of further design aspects would have to be
  reconsidered as outlined before.  The true advantage of iterative
  solvers, however, is that dynamic model adaptations become feasible
  during the solution of the subproblem, because there is no need to
  recompute the Schur complement each time. This allows for entirely
  new strategies such as combining the ideas of
  \cite{OskoorouchiGoffin2003,BabonneauBeltranHaurieTadonkiVial2007} and \cite{HelmbergRendl98} in order
  to cut down on the number null steps at an early stage. This remains
  to be addressed in future work.}

\section{Conclusions}\label{S:concl}

In search for efficient low rank preconditioning techniques for the
iterative solution of the internal KKT system of the quadratic bundle
subproblem two subspace selection heuristics --- a randomized and a
deterministic variant --- were proposed.  For the randomized approach
the results are ambivalent in theory and in practice; obtaining a good
subspace this way seems to be difficult and the cost of exploratory
matrix-vector multiplications quickly dominates. In contrast, the
deterministic subspace selection approach allows to control the
condition number (and with it the number of matrix vector
multiplications) at a desired level without the need to tune any
parameters in theory as well as on the test instances. On these
instances, for low precision requirements (large barrier parameter)
the selected subspace is negligible small. For high precision
requirements (small barrier parameter) the subspace grows to the
active model subspace. If the bundle size is close to this active
dimension, the work in forming the preconditioner may be comparable to
forming the Schur complement for the direct solver. Still, for large
scale instances the deterministically preconditioned iterative
approach seems to be preferable.

Conceivably it is possible to profit in ConicBundle from the
advantages of the deterministic iterative and the direct solver by
switching dynamically between both. The current experiments relied on
a predictor-corrector approach that was tuned for the direct
solver. In view of the properties of the iterative approach it may
well be worth to devise a different path following strategy for the
iterative approach, in particular for the initial phase of the
interior point method when the barrier parameter is still
comparatively large and the work invested in forming the
preconditioner is still negl\upd{igible}.  Similar ideas should be
applicable to interior point solvers for solving convex quadratic
problems with low rank structure.

\textbf{Acknowledgments} I have profited a lot from discussions with
many colleagues, in part years back. In particular I have to thank
K.-C.~Toh as well as my colleagues O.~Ernst, R.~Herzog, A.~Pichler and
M.~Stoll in Chemnitz. Much of the preparatory restructuring of
ConicBundle was done during my sabbatical at the University of
Klagenfurt, thank you to F.~Rendl and A.~Wiegele for making this
possible.  The support of research grants 05M18OCA of the German
Federal Ministry of Education and Research and the DFG CRC 1410 is
gratefully acknowledged.


\newpage 
\begin{appendix}
  \section{Tables}
  For each box plot of figures \ref{fig:MC}--\ref{fig:MMBIS} for
  the three instances MC, BIS and MMBIS the following tables list the
  number of instances and the values of the parameters minimum, lower
  quartile ($Q_1$), median, upper quartile ($Q_3$), maximum. For each of the three
  instances an additional table gives the statistics on the Euclidean
  norm of the resulting residual of \eqref{eq:fullKKT} achieved by the
  respective solver for the KKT systems grouped by the usual
  value ranges of the barrier parameter.

  \subsection{Max-Cut (Instance MC, Figure \ref{fig:MC})}
    
\noindent Time per subproblem in seconds (338 instances):\\
\centerline{\small
}

\newpage

\upd{\subsection{Performance on Random Max-Cut Instances}}\label{A:MCperf}

\upd{The graphs were randomly generated  with \texttt{rudy} \cite{rudy}.
An instance denoted by $MC(n,d,s)$  refers to a random graph on
$n$ nodes with edge density $d$ and seed $s$ for the random number
generator. It is generated by the call \mbox{\texttt{rudy
    -rnd\_graph n d s}}.}

\upd{Together with each instance a comparison value is listed. This value
was obtained by running ConicBundle \cite{ConicBundle2021} on
computers having a QUAD-Core-Processor
INTEL-Core-I7-4770 (4$\times$ 3400MHz, 8 MB Cache) with 32 GB RAM
 and operating system Ubuntu 18.04 in sequential mode (but mostly two instances at
 the same time) for termination precision $10^{-6}$
for each method and then taking among all these the minimum value
produced. For this value $\gamma$ we then determined within the log
files for each method and relative precision level
$\varepsilon\in\{10^{-3},10^{-4},10^{-5},10^{-6}\}$ the
first descent step that yields an objective value of at most
$\gamma+\varepsilon(1+ |\gamma|)$ and list the user time 
needed to reach this step (rounded to seconds) together with the total number of steps (null and descent
steps) and the total number of KKT systems
solved up to this point. The (non rounded) numbers are employed for the
performance profiles in figures \ref{fig:timeperf1} and \ref{fig:timeperf2}.}

\upd{{\small
}
}

\end{appendix}

\end{document}